\documentclass [11pt, reqno]{amsart}
\usepackage{times}

\usepackage{amssymb,latexsym,amsmath,amsfonts, eucal}

\topmargin .57in \voffset = -40pt \hoffset = -44pt \textwidth =
15cm \textheight = 52\baselineskip

\newtheorem{thm}{Theorem}[section]
\newtheorem{cor}[thm]{Corollary}
\newtheorem{lem}[thm]{Lemma}
\newtheorem{prop}[thm]{Proposition}
\theoremstyle{definition}
\newtheorem{defn}[thm]{Definition}
\theoremstyle{remark}
\newtheorem{rem}[thm]{Remark}
\numberwithin{equation}{section}

\newcommand{\beas}{\begin{eqnarray*}}
\newcommand{\eeas}{\end{eqnarray*}}
\newcommand{\bes} {\begin{equation*}}
\newcommand{\ees} {\end{equation*}}
\newcommand{\be} {\begin{equation}}
\newcommand{\ee} {\end{equation}}
\newcommand{\bea} {\begin{eqnarray}}
\newcommand{\eea} {\end{eqnarray}}

\newcommand{\bC} {\mathbb{C}}

\title[Properties of singular integral operators $S_{\alpha,\beta}$]{Properties of singular integral operators $S_{\alpha,\beta}$}
\thanks{The first author is supported by the NBHM Postdoctoral Fellowship, Govt. of India. The second author is supported by the Feinberg Postdoctoral Fellowship of the Weizmann Institute of Science. }


\author[A. Samanta]{Amit Samanta}
\address{Amit Samanta, Department of Mathematics and Statistics, Indian Institute Of Technology, Kanpur-208016, India }
\email{{\tt amit.gablu@gmail.com, asamanta@iitk.ac.in}}

\author[S. Sarkar]{Santanu Sarkar}
\address{Santanu Sarkar, Department of Mathematics, The Weizmann Institute of Science, P.O. Box 26, Rehovot-7610001, Israel }
\email{{\tt santanu87@gmail.com, santanu.sarkar@weizmann.ac.il}}

\begin{document}

{\begin{abstract} For $\alpha, \beta \in L^{\infty} (S^1),$  the singular integral operator $S_{\alpha,\beta}$ on $L^2 (S^1)$ is defined by $S_{\alpha,\beta}f:= \alpha Pf+\beta Qf$, where $P$ denotes the orthogonal projection of $L^2(S^1)$ onto the Hardy space $H^2(S^1),$ and $Q$ denotes the orthogonal projection onto $H^2(S^1)^{\perp}.$  In a recent paper  Nakazi and Yamamoto have studied the normality and self-adjointness of $S_{\alpha,\beta}.$ This work has shown that $S_{\alpha,\beta}$ may have analogous properties to that of the Toeplitz operator. In this paper we study several other properties of $S_{\alpha,\beta}.$ 
\end{abstract}

\subjclass[2010]{45E10, 47B35, 47B20, 30D55.}
\keywords{Singular integral operator, Toeplitz operator, Hardy space. }

\maketitle
\section{Introduction}
Let $L^2 = L^2(S^1)$ denotes the the Hilbert space of square integrable functions on the circle $S^1 = \{ z \in \bC : |z| = 1\}$ with respect to the normalized Lebesgue measure. The inner product of two functions $f, g \in L^2 $ is given by $$ \langle f, g \rangle = \frac{1} {2\pi} \int_{0}^{2\pi} f(e^{i\theta}) \bar{g}(e^{i\theta})d\theta.$$ The norm will be denoted by $\parallel . \parallel,$  that is $\parallel f \parallel ^2 = \langle f , f \rangle.$ Let $L^{\infty} = L^{\infty} (S^1)$ denotes the space of all essentially bounded measurable functions on $S^1.$  The norm of a function $f \in L^{\infty} $ is given by $\parallel f \parallel _{\infty} = {\mbox{ess sup}}_{S^1} |f|.$ Let $
H^2$ denotes the usual Hardy space on $S^1.$ That is  it consists of all $f$ in $L^2$ with all the  negative Fourier coefficients equal to zero. Similarly we define $H^{\infty},$ to be the space of all $L^{\infty}$ functions with all the negative Fourier coefficients zero. Let $H_{2}^{\perp}$ denotes the orthogonal complement of $H^2$ in $L^2.$    Let $P$ and $Q$ denote the orthogonal projection of $L^2$ onto $H^2$ and $H^{2\bot}$ respectively. Thus $P + Q = I,$ where $I$ is the identity operator on $L^2.$ 

Let $S$ be the singular integral operator defined by  $$(Sf)(z) = \frac{1} {\pi i} \int_{S^1} \frac{f(w)}{w-z} dw.$$
    This operator is well-studied (\cite{GK}, Vol.I, p.12).  $S$ can be written in terms of $P$ and $Q:$  $S = P - Q.$   The operator $S$ has natural generalization to the operator $S_{\alpha, \beta}$ where $S_{\alpha, \beta} (\alpha,\beta\in L^\infty)$ on $L^2$ is defined by
$$S_{\alpha,\beta}f:= \alpha Pf+\beta Qf, \hspace{4mm}f\in L^2.
$$ Some properties of $S_{\alpha,\beta}$  related to the norm (or weighted norm), invertibility, boundedness (with weight) etc have been studied in  (\cite{NY1}, \cite{NY2} , \cite{NY3}, \cite{NY4}, \cite{NY5}, \cite{NY6}, \cite{NY7}, \cite{NY8}, \cite{Y1}, \cite{Y2}). Some of these results are generalizations of properties of $S.$ In a recent paper (\cite{NY8}) the normality and self-adjointness of the operator $S_{\alpha,\beta}$ are studied. This work has shown that $S_{\alpha,\beta}$ may have analogous properties to the Toeplitz operator. In this paper we found several analogous properties of $S_{\alpha,\beta}$ corresponding to that of the Toeplitz operator. 

The paper is organized as follows. 
In Section 2 we give a characterization of $S_{\alpha,\beta}$ in terms of its matrix representation. In Section 3 we discuss about its operator norm. In Section 4 we give characterization of $S_{\alpha,\beta}$ as a commutator of $S_{z,\bar{z}}.$ Section 5 deals with the invariant subspace and reducing subspace of $S_{z,\bar{z}}.$
In Section 6 we discuss about composition of two operators $S_{\alpha_1,\beta_1}$ and $S_{\alpha_2,\beta_2}.$ Also we study their commutativity. In Section 7 we give some results related to the compactness of $S_{\alpha,\beta}.$ Section 8 deals with the spectrum of $S_{\alpha,\beta}.$ In Section 9 we discuss about the injectivity of $S_{\alpha,\beta}$ and its adjoint. 

In order to compare our results with that of the Toeplitz operator the corresponding properties of the Toeplitz operator is worth mentioning. 
For $\phi \in L^\infty$ the Toeplitz operator $T_{\phi}$ on $H^2$ is defined by $T_{\phi} (f) = P(\phi f)$ for all $f \in H^2.$ The following mentioned properties of the Topelitz operator are well-known and can be found in (\cite{MR}, Chapter 3 and Chapter 1).
\begin{itemize}
\item  A bounded operator on $H^2$ is a Toeplitz operator if and only if its matrix with respect to the orthonormal basis $\{ e^{in\theta}\}$ has constant diagonal entries. That is $A_{m_1, n_1} = A_{m_2, n_2}$ whenever $m_1 - n_1 = m_2 - n_2$ where $(A_{m,n})_{m,n \geq 0}$ denotes the $(m,n)$th entry of the matrix.
    \item $\parallel T_{\phi} \parallel = \mbox{Spectral radius of } T_{\phi} = \parallel \phi \parallel _{\infty}.$
    \item The commutant of the unilateral shift acting on $H^2$ is $T_\phi$ such that $\phi \in H^{\infty}.$
    \item {\textit{(Beurling's Theorem).}} Every invariant subspace of the unilateral shift acting on $H^2$ other than $\{0\}$ has the form $\psi H^2,$ where $\psi$ is an inner function, i.e. $\psi \in H^{\infty}, |\psi| =  1$ a.e.
     \item The only reducing subspaces of the unilateral shift are $\{0\}$ and $H^2.$
     \item Let $\psi, \phi \in L^{\infty}$. Then $T_{\psi} T_{\phi}$ is a Toeplitz operator if and only if $\psi$ is co-analytic or $\phi$ is analytic. In both of these cases $T_{\psi} T_{\phi} = T_{\psi \phi}.$ It follows that the product of two Toeplitz operator is zero if and only if at least one of the factor is zero. Also it follows that if both $\phi$ and $\psi$ are analytic or both  $\phi$ and $\psi$ are co-analytic, then they commute with each other. In fact there can arise one more case when $T_{\phi}$ commutes with $T_{\psi}$ and that is $a \phi + b \psi$ is constant for some constants $a$ and $b$ not both equal to zero.
      \item The only compact Toeplitz operator is the zero operator. Toeplitz operator can not even get closer to compact operators, more preciously if $\phi \in L^{\infty}$ and $K$ is a compact operator then $\parallel T_{\phi} - K \parallel \geq \parallel T_{\phi} \parallel.$
          \item Spectral radius of the Toeplitz operator is $\parallel \phi \parallel _{\infty}.$ If $\phi$ is analytic or co-analytic then $\sigma(T_{\phi}) = \overline{\phi(\mathbb{D})},$ where $\mathbb{D}$ is the open unit disc in $\mathbb{C}.$ If $\phi$ is continuous (so that it can be cosidered as a closed curve in $\mathbb{C}$), then 
 $$
\sigma(T_{\phi}) = \mbox{Range} (\phi) \cup \{ a \in \mathbb{C}: a \notin \mbox{Range} (\phi)\, \mbox{and}~~ \mbox{ind}_{a} \phi \neq 0 \},
$$ where~~$ \mbox{ind}_{a} \phi = \frac{1} {2\pi i} \int_{\phi} \frac{1} {z-a} dz$.
              \item {\textit{(The Coburn Alternative).}} If $\phi$ is a non-zero function in $L^{\infty},$ then at least one of $T_{\phi}$ and $T_{\phi}^*$ is injective.
\end{itemize}

We end this section with a well-known theorem of F. and M. Riesz, which will be used several times in this paper.
\begin{thm}{\textit{(The F. and M. Riesz Theorem).}} If $f \in H^2$ and the set $\{ e^{i \theta}: f(e^{i \theta}) = 0\}$ has positive measure, then $f$ is identically zero.
\end{thm}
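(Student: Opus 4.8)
The plan is to prove the contrapositive in the following strengthened form: if $f \in H^2$ is not identically zero, then its boundary modulus is logarithmically integrable, i.e. $\frac{1}{2\pi}\int_0^{2\pi} \log|f(e^{i\theta})|\, d\theta > -\infty$. This immediately forces the zero set $\{e^{i\theta} : f(e^{i\theta}) = 0\}$ to have measure zero, since on that set $\log|f| = -\infty$, and an integrable function cannot take the value $-\infty$ on a set of positive measure. Thus the whole statement reduces to establishing log-integrability for a nonzero $H^2$ function.

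First I would pass from the boundary function $f$ to its holomorphic extension $F$ on the open unit disc $\mathbb{D}$, given by the Cauchy (equivalently Poisson) integral of $f$. Since $f \in H^2$, the dilates $F_r(e^{i\theta}) := F(re^{i\theta})$ converge to $f$ both in $L^2$ and almost everywhere as $r \ra 1^-$, and $\sup_{r<1}\frac{1}{2\pi}\int |F_r|^2\, d\theta = \|f\|^2$. After dividing $f$ by a suitable power of $z$ — an operation that leaves the boundary modulus unchanged because $|z| = 1$ on $S^1$ — I may assume $F(0) \neq 0$.

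The heart of the argument is Jensen's inequality. For each $r < 1$, discarding the nonnegative contribution of the zeros of $F$ inside the disc of radius $r$ gives $\log|F(0)| \leq \frac{1}{2\pi}\int_0^{2\pi} \log|F_r(e^{i\theta})|\, d\theta$. Writing $\log = \log^+ - \log^-$ and using the elementary bound $\log^+ t \leq t^2$ together with the uniform $L^2$-bound yields $\frac{1}{2\pi}\int \log^+|F_r|\, d\theta \leq \|f\|^2$ for every $r$. Combining these two facts bounds the negative part uniformly: $\frac{1}{2\pi}\int \log^-|F_r|\, d\theta \leq \|f\|^2 - \log|F(0)| =: C < \infty$, independently of $r$. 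Applying Fatou's lemma along a sequence $r_n \ra 1$, using the almost-everywhere convergence $F_{r_n} \ra f$, gives $\frac{1}{2\pi}\int \log^-|f|\, d\theta \leq C$, so $\log^-|f| \in L^1$ and hence $\frac{1}{2\pi}\int \log|f|\, d\theta > -\infty$, completing the reduction.

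The step I expect to require the most care is the passage to the limit $r \ra 1$: one must invoke the boundary-value theorem guaranteeing the almost-everywhere convergence $F_r \ra f$, and apply Fatou's lemma in the correct direction to the nonnegative function $\log^-$, so that $\frac{1}{2\pi}\int \log^-|f|\, d\theta \leq \liminf_n \frac{1}{2\pi}\int \log^-|F_{r_n}|\, d\theta$ is legitimate. An alternative route avoiding Jensen's formula would invoke the inner--outer factorization of $f$, but that machinery itself rests on the same log-integrability fact, so the Jensen-inequality argument above is the more self-contained choice.
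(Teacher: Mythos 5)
Your proof is correct, and there is nothing in the paper to compare it against: the paper states this result as a well-known theorem (a lead-in to Section 1's toolbox, with \cite{MR} as the ambient reference) and gives no proof at all. What you prove is in fact the standard strengthening --- log-integrability of the boundary modulus of a nonzero $H^2$ function --- from which the zero-set statement follows at once, and your route (Jensen's inequality at the center, the elementary bound $\log^+ t \leq t^2$ against the uniform $L^2$ bound on the dilates $F_r$, then Fatou's lemma applied to the nonnegative part $\log^-$) is the classical textbook argument. Your bookkeeping at the delicate step is right: since $t \mapsto \log^- t$ is continuous as a map from $[0,\infty)$ into $[0,\infty]$, the almost-everywhere convergence $F_{r_n} \rightarrow f$ gives $\log^-|F_{r_n}| \rightarrow \log^-|f|$ a.e., and Fatou is applied in the legitimate direction, yielding $\frac{1}{2\pi}\int \log^-|f|\,d\theta \leq \|f\|^2 - \log|F(0)|$. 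Two small points would make the write-up airtight. First, you should note explicitly that $F \not\equiv 0$: this is immediate because the Taylor coefficients of $F$ are the Fourier coefficients of $f$, and it is what licenses the normalization $F(0) \neq 0$ --- dividing by $z^k$ keeps the boundary function in $H^2$ precisely because $\hat f(0) = \cdots = \hat f(k-1) = 0$ when $F$ vanishes to order $k$ at the origin, while leaving $|f|$ unchanged on $S^1$. Second, the inputs you invoke --- Fatou's radial-limit theorem for Poisson integrals and Jensen's formula (or, cleaner, the sub-mean-value property of the subharmonic function $\log|F|$, which sidesteps zeros of $F$ on the circle $|z|=r$) --- are nontrivial but logically independent of the log-integrability being proved, so, as you correctly observe, this route is genuinely more self-contained than appealing to inner--outer factorization, which presupposes the very fact at issue.
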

\noindent It follows easily from the theorem that the same is true if $f$ is a co-analytic function in $L^2.$

\section{Matrix}
The following theorem gives a characterization $S_{\alpha,\beta}$ in terms of its matrix representation.

\begin{thm}
Let $T$ be a bounded linear operator on $L^2$. Then $T=S_{\alpha,\beta}$ for some $L^\infty$ functions $\alpha$ and $\beta$ iff its matrix with respect to the orthonormal basis $\{z^n\}_{n=-\infty}^\infty$ has the form
$$
\begin{bmatrix}
\ddots &\ddots &\ddots\\
\ddots &b_{-1}&b_{-2}&b_{-3}\\
\ddots &b_0&b_{-1}&b_{-2}&a_{-3}\\
\ddots &b_1&b_0&b_{-1}&a_{-2}&a_{-3}\\
\ddots &b_2&b_1&b_0&a_{-1}&a_{-2}&a_{-3}\\
&b_3&b_2&b_1&\textbf{a}_0&a_{-1}&a_{-2}&a_{-3}\\
&&b_3&b_2&a_1&a_0&a_{-1}&a_{-2}&\ddots\\
&&&b_3&a_2&a_1&a_0&a_{-1}&\ddots\\
&&&&a_3&a_2&a_1&a_0&\ddots\\
&&&&&a_3&a_2&a_1&\ddots\\
&&&&&&\ddots &\ddots &\ddots
\end{bmatrix}
$$
for some constants $a_n$ and $b_n$, $n\in\mathbb{Z}$ (the set of all integers); more precisely its $(m,n)$th entry is $a_{m-n}$ or $b_{m-n}$ accordingly whether $n\geq 0$ or $n\leq -1$. (In the matrix boldface denotes the $(0,0)$ position).
In this case $a_n=\hat\alpha(n)$ and $b_n=\hat\beta(n)$.
\end{thm}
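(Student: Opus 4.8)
The plan is to prove both implications by computing matrix entries against the given orthonormal basis, where throughout the $(m,n)$ entry of an operator $A$ means $\langle A z^n, z^m\rangle$.

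\emph{Forward direction.} Suppose $T=S_{\alpha,\beta}$ with $\alpha,\beta\in L^\infty$. For $n\ge 0$ we have $z^n\in H^2$, so $Pz^n=z^n$, $Qz^n=0$, and hence $Tz^n=\alpha z^n$; the $(m,n)$ entry is $\langle \alpha z^n, z^m\rangle=\hat\alpha(m-n)$. For $n\le -1$ we have $z^n\in H^{2\bot}$, so $Pz^n=0$, $Qz^n=z^n$, $Tz^n=\beta z^n$, and the $(m,n)$ entry is $\hat\beta(m-n)$. Setting $a_k=\hat\alpha(k)$ and $b_k=\hat\beta(k)$ reproduces exactly the stated matrix and already identifies $a_n=\hat\alpha(n)$, $b_n=\hat\beta(n)$.

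\emph{Reverse direction (setup).} Now assume $T$ is bounded with a matrix of the stated form for some constants $a_n,b_n$. I would define the two candidate symbols as images of basis vectors, namely $\alpha:=T1=Tz^0$ and $\beta:=z\,(Tz^{-1})$. Both lie in $L^2$ since $T$ is bounded, and reading off Fourier coefficients from the prescribed columns $n=0$ and $n=-1$ gives $\hat\alpha(m)=a_m$ and $\hat\beta(m)=b_m$. Comparing formal Fourier expansions then shows, column by column, that $Tz^n=\alpha z^n$ for every $n\ge 0$ and $Tz^n=\beta z^n$ for every $n\le -1$, as identities in $L^2$.

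\emph{The main point} is upgrading $\alpha,\beta$ from $L^2$ to $L^\infty$, which is the only real obstacle and the step where boundedness of $T$ must be exploited. The identity $Tf=\alpha f$ for analytic polynomials $f$ only gives that multiplication by $\alpha$ is $L^2$-bounded on $H^2$. To promote this to a bound on all of $L^2$, I would use the bilateral shift: given any trigonometric polynomial $g$ with frequencies in $[-N_0,N_0]$, the function $z^{N_0}g$ is an analytic polynomial, so $\|\alpha\,z^{N_0}g\|=\|T(z^{N_0}g)\|\le \|T\|\,\|z^{N_0}g\|$; since multiplication by the unimodular $z^{N_0}$ is an $L^2$-isometry, this reduces to $\|\alpha g\|\le \|T\|\,\|g\|$ for every trigonometric polynomial $g$. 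The symmetric choice $z^{-(N_0+1)}g\in H^{2\bot}$ gives $\|\beta g\|\le \|T\|\,\|g\|$. Extending multiplication by $\alpha$ (resp.\ $\beta$) from the dense set of trigonometric polynomials to a bounded operator on $L^2$, identifying the extension with genuine pointwise multiplication via an a.e.-convergent subsequence, and invoking the standard fact that a bounded multiplication operator on $L^2$ has an $L^\infty$ symbol, I conclude $\alpha,\beta\in L^\infty$ with $\|\alpha\|_\infty,\|\beta\|_\infty\le \|T\|$. Finally, since $Tz^n=\alpha Pz^n+\beta Qz^n=S_{\alpha,\beta}z^n$ on every basis vector and both operators are bounded, $T=S_{\alpha,\beta}$ on all of $L^2$.
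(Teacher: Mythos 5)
Your proof is correct, and for most of its length it runs parallel to the paper's: the forward direction is identical, and in the converse you define the symbols exactly as the paper does, $\alpha:=T1$ and $\beta:=z\,Tz^{-1}$, read off $\hat\alpha(m)=a_m$, $\hat\beta(m)=b_m$ from columns $0$ and $-1$, and verify $Tz^n=\alpha z^n$ ($n\geq 0$), $Tz^n=\beta z^n$ ($n\leq -1$) column by column. The genuine divergence is in the step you correctly single out as the main point, namely $\alpha,\beta\in L^\infty$. The paper skips this step here and defers to the end of the proof of its Theorem 4.2, where boundedness of the symbol is obtained by the classical power-iteration trick: setting $\gamma=\beta/\|T\|$ one shows $\|\gamma^n\|\leq 1$ for all $n$ and then rules out a set of positive measure where $|\gamma|\geq 1+\epsilon$. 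You instead exploit the bilateral shift: any trigonometric polynomial $g$ becomes analytic (resp.\ co-analytic) after multiplication by a unimodular power $z^{N_0}$ (resp.\ $z^{-(N_0+1)}$), so the bound $\|\alpha g\|\leq\|T\|\,\|g\|$ propagates from $H^2$ to all trigonometric polynomials, and the conclusion follows from the standard fact that an $L^2$-bounded multiplication operator has an $L^\infty$ symbol. This is not merely cosmetic: the iteration step in the paper's deferred argument ($\|\gamma^2\bar z\|=\|\gamma(\gamma\bar z)\|\leq\|\gamma\bar z\|$) uses that $\gamma\bar z\in H^{2\bot}$, which in Theorem 4.2 is guaranteed because $\beta$ is co-analytic there, whereas in the present theorem $\alpha$ and $\beta$ are arbitrary; your shift reduction works for arbitrary symbols and so is actually the better-adapted argument at this point of the paper, and it yields the quantitative bound $\max\{\|\alpha\|_\infty,\|\beta\|_\infty\}\leq\|T\|$ directly. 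Your closing step is also slightly cleaner than the paper's: rather than running the a.e.-subsequence density argument to prove $Tf=\alpha f$ on all of $H^2$ before knowing boundedness of the symbols, you first secure $\alpha,\beta\in L^\infty$ and then conclude $T=S_{\alpha,\beta}$ because two bounded operators agreeing on an orthonormal basis agree everywhere.
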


\begin{proof}
First assume that $T=S_{\alpha,\beta}$ for some $\alpha,\beta\in L^\infty$. Since $Tz^n=\alpha z^n$ if $n\geq 0$ and $Tz^n=\beta z^n$ if $n\leq -1$, it follows that $\langle Tz^n,z^m\rangle$ is $\hat\alpha(m-n)$ or $\hat\beta(m-n)$ accordingly whether $n\geq 0$ or $n\leq -1$. Therefore the matrix of $T$ with respect to the orthonormal basis $\{z^n\}_{n=-\infty}^{\infty}$ has the given form with $a_n=\hat\alpha(n)$ and $b_n=\hat\beta(n)$. This proves the necessary part.

For the converse part, assume that the matrix of $T$ has the given form. Define $\alpha:=T1$ and $\beta=zTz^{-1}$. Then $\alpha,\beta\in L^2$, and
$$
\alpha(z)=(T1)(z)=\sum_{m=-\infty}^{\infty}\langle T1,z^m\rangle z^m=\sum_{m=-\infty}^{\infty}a_mz^m
,$$
 since $\langle T1,z^m\rangle =(m,0)$ th entry in the matrix $=a_m$.  Similarly
$$
 \beta(z)=zTz^{-1}=z\bigg(\sum_{m=-\infty}^{\infty}\langle Tz^{-1},z^m\rangle z^m\bigg)=z\bigg(\sum_{m=-\infty}^\infty b_{m+1}z^m\bigg)=\sum_{m=-\infty}^\infty b_{m}z^m
 .$$
If $n\geq 0$,
$$
Tz^n=\sum_{m=-\infty}^{\infty}\langle Tz^n,z^m\rangle z^m=\sum_{m=-\infty}^{\infty}a_{m-n}z^m=z^n\bigg(\sum_{m=-\infty}^{\infty}a_{m-n}z^{m-n}\bigg)=z^n\alpha(z).
$$
In a similar way, $Tz^n=z^n\beta(z)$ if if $n\leq -1$. Since $T$ is linear, we can say that $Tf=\alpha f$ if $f$ is a trigonometric polynomial in $H^2$, and $Tf=\beta f$ if $f$ is a trigonometric polynomial in $H^{2\bot}$. Now, if $f\in H^2$, then there is a sequence ${f_n}$ of trigonometric polynomials in $H^2$ such that $f_n\rightarrow f$ in $L^2$, so that $\alpha f_n=Tf_n\rightarrow Tf$ in $L^2$, since $T$ is bounded. Now along a subsequence $f_{n_k}\rightarrow f$ point wise almost everywhere. Hence $\alpha f_{n_k}\rightarrow \alpha f$ point wise almost everywhere. So we can conclude that $Tf=\alpha f$ almost everywhere  for all $f \in H^2.$ In a similar way we can show that $Tf=\beta f$ for all $f$ in $H^{2\bot}$. So, we have $Tf=\alpha Pf+\beta Qf$ for all $f\in L^2$ or $T=S_{\alpha,\beta}$. It remains to prove that $\alpha$ and $\beta$ are in $L^\infty$. But this proof  is standard and we skip the proof here, because in another occasion we shall  prove the same thing(see the last part of the proof of Theorem \ref{commutators theorem} in section 4).
\end{proof}

\section{Operator norm}
Let $\alpha,\beta\in L^\infty$. In \cite{NY3} (Theorem 2.1), Nakazi, Yamamoto has given the following formula for the operator norm of $S_{\alpha,\beta}$ :
$$
||S_{\alpha,\beta}||=\inf_{k\in H^\infty}\left|\left|\frac{|\alpha|^2+|\beta|^2}{2}+\sqrt{|\alpha\bar\beta-k|^2+\left(\frac{|\alpha|^2-|\beta|^2}{2}\right)^2}\right|\right|_{\infty}
$$
We shall use some of the following  results related to the operator norm in future. Though they can be deduced from the above formula, we shall give direct simple proofs. 
For $\beta\in L^\infty$, we define the operator $\tilde{T_\beta}$ on $H^{{2}^{\perp}}$ given by $$\tilde{T_\beta} (f)  = Q(\beta f), ~~~f \in H^{{2}^{\perp}}. $$ 
\begin{thm}\label{norm of S alpha beta}
Let $\alpha,\beta\in L^\infty$.

\noindent \textup{(i)} If $\alpha_n\rightarrow\alpha$ and $\beta_n\rightarrow\beta$ in $L^\infty$ norm then $S_{\alpha_n,\beta_n}\rightarrow S_{\alpha,\beta}$ in the operator norm.

\noindent \textup{(ii)} $\textup{max}\big\{||\alpha||_\infty,||\beta||_\infty\big\}\leq||S_{\alpha,\beta}||\leq\sqrt{||\alpha||^2_\infty+||\beta||^2_\infty}$.

\noindent \textup{(iii)} If $\alpha\bar{\beta}\in H^\infty$ then $||S_{\alpha\beta}||=\textup{max}\big\{||\alpha||_\infty,||\beta||_\infty\big\}$

\noindent \textup{(iv)} If $|\alpha|=|\beta|=$ constant and $\alpha\bar\beta\in\frac{H^{2\bot}}{H^2}$ i.e. $\alpha\bar\beta$ can be written as quotient of a function in $H^{2\bot}$ and a function in $H^2$, then $||S_{\alpha,\beta}||=\sqrt{||\alpha||^2_\infty+||\beta||^2_\infty}$ .

\noindent \textup{(v)} There exists $\alpha,\beta$ such that $\textup{max}\big\{||\alpha||_\infty,||\beta||_\infty\big\}<||S_{\alpha,\beta}||<\sqrt{||\alpha||^2_\infty+||\beta||^2_\infty}$.
\end{thm}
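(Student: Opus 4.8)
The plan is to establish (ii) first, since every other part rests on it, and then read off (i) as an immediate corollary. For the upper bound I would use the pointwise Cauchy--Schwarz inequality $|\alpha\, Pf + \beta\, Qf|^2 \le (|\alpha|^2 + |\beta|^2)(|Pf|^2 + |Qf|^2)$, integrate over $S^1$, bound $|\alpha|^2 + |\beta|^2$ by $\|\alpha\|_\infty^2 + \|\beta\|_\infty^2$, and finish with the orthogonality identity $\|Pf\|^2 + \|Qf\|^2 = \|f\|^2$. For the lower bound the one fact I need is that $\sup\{\|\phi f\|/\|f\| : 0\ne f\in H^2\} = \|\phi\|_\infty$ for every $\phi\in L^\infty$; this follows because $\|\phi f\|\ge\|P(\phi f)\| = \|T_\phi f\|$ and $\|T_\phi\| = \|\phi\|_\infty$. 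Testing $S_{\alpha,\beta}$ on $H^2$ (where it acts as multiplication by $\alpha$) gives $\|S_{\alpha,\beta}\|\ge\|\alpha\|_\infty$; testing on $H^{2\bot}$ (where it acts as multiplication by $\beta$) and transporting via the modulus-preserving bijection $f\mapsto \bar z\bar f$ of $H^2$ onto $H^{2\bot}$ gives $\|S_{\alpha,\beta}\|\ge\|\beta\|_\infty$. Part (i) is then immediate from linearity, $S_{\alpha_n,\beta_n} - S_{\alpha,\beta} = S_{\alpha_n-\alpha,\,\beta_n-\beta}$, and the upper bound just proved.

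For (iii) I would expand $\|S_{\alpha,\beta}f\|^2 = \|\alpha Pf\|^2 + \|\beta Qf\|^2 + 2\,\mathrm{Re}\,\langle \alpha Pf, \beta Qf\rangle$ and show the cross term vanishes. Writing it as $\frac1{2\pi}\int \alpha\bar\beta\,(Pf)\overline{(Qf)}$ and noting that $(Pf)\overline{(Qf)}$ has only Fourier frequencies $\ge 1$ while $\alpha\bar\beta\in H^\infty$ has only frequencies $\ge 0$, the integrand has zero constant Fourier coefficient, so the integral is $0$. Hence $\|S_{\alpha,\beta}f\|^2 = \|\alpha Pf\|^2 + \|\beta Qf\|^2 \le \max\{\|\alpha\|_\infty^2,\|\beta\|_\infty^2\}\,\|f\|^2$, and combining with the lower bound of (ii) gives equality.

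For (iv) the same expansion with $|\alpha| = |\beta| = c$ collapses to $\|S_{\alpha,\beta}f\|^2 = c^2\|f\|^2 + 2\,\mathrm{Re}\,\langle u\,Pf, Qf\rangle$, where $u = \alpha\bar\beta$ and $|u| = c^2$. Since (ii) already gives $\|S_{\alpha,\beta}\|\le c\sqrt2$, it suffices to make the cross term large. Writing $u = \psi/\phi$ with $\phi\in H^2$, $\psi\in H^{2\bot}$, I would use $u\phi = \psi\in H^{2\bot}$ together with $\|\psi\|^2 = \frac1{2\pi}\int|u|^2|\phi|^2 = c^4\|\phi\|^2$; choosing the test vector $f = c^2\phi + \psi$ (so $Pf = c^2\phi$, $Qf = \psi$) makes $\langle u\,Pf, Qf\rangle = c^2\langle\psi,\psi\rangle = c^2\|\psi\|^2$ real and positive, and a direct substitution gives $\|S_{\alpha,\beta}f\|^2 = 2c^2\|f\|^2$. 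Thus $\|S_{\alpha,\beta}\| = c\sqrt2 = \sqrt{\|\alpha\|_\infty^2 + \|\beta\|_\infty^2}$.

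Finally, for (v) I would push the computation of (iv) one step further: for $|\alpha| = |\beta| = c$ one gets $\|S_{\alpha,\beta}\|^2 = c^2 + 2\sup\{\mathrm{Re}\,\langle u\,Pf, Qf\rangle : \|f\| = 1\}$, and optimising the phase and the split $\|Pf\|^2 + \|Qf\|^2 = 1$ shows this supremum equals $\tfrac12\|H\|$, where $H\colon H^2\to H^{2\bot}$ is the operator $Hg = Q(ug)$; hence $\|S_{\alpha,\beta}\|^2 = c^2 + \|H\|$ (in agreement with the Nakazi--Yamamoto formula specialised to $|\alpha|=|\beta|$, but derived here directly). The problem therefore reduces to exhibiting a unimodular $u$ (take $c = 1$) with $0 < \|H\| < 1$, since then both inequalities of (ii) are strict. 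I would take $u = e^{i\epsilon\cos\theta}$ with $\epsilon$ small: on one hand $\|Hg\| = \|Q((u-1)g)\|\le\|u-1\|_\infty\|g\|$ with $\|u-1\|_\infty = 2\sin(\epsilon/2) < 1$, so $\|H\| < 1$; on the other hand $\|H\|\ge\|H1\| = \|Qu\|\ge|\hat u(-1)|$, and expanding $e^{i\epsilon\cos\theta} = e^{i\frac\epsilon2 z}e^{i\frac\epsilon2\bar z}$ gives $\hat u(-1) = \tfrac{i\epsilon}{2}(1 + O(\epsilon^2))\ne 0$, so $\|H\| > 0$. Taking $\alpha = 1$ and $\beta = \bar u$ then yields the required example. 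I expect this last construction to be the main obstacle: the clean rational unimodular candidates (single Blaschke factors, $\bar z$, and the like) all give $\|H\| = \|u\|_\infty$ exactly, so one is forced to a genuinely spread-out symbol and must verify both strict bounds by hand.
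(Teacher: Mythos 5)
Your proposal is correct in all five parts, and for (i)--(iv) it is essentially the paper's own proof: the same Cauchy--Schwarz upper bound and Toeplitz lower bound in (ii) (where the paper simply cites $\|\tilde{T}_\beta\|=\|\beta\|_\infty$, you make the same fact explicit by transporting $H^{2\bot}$ to $H^2$ via $f\mapsto \bar z\bar f$), the same vanishing cross term $\langle \alpha Pf,\beta Qf\rangle=0$ in (iii), and in (iv) your test vector $f=c^2\phi+\psi$ is exactly the function the paper leaves implicit when it asserts the existence of $f$ with $\alpha Pf-\beta Qf=0$; the paper then finishes with the parallelogram-type identity $(\|\alpha\|_\infty^2+\|\beta\|_\infty^2)\|f\|^2-\|S_{\alpha,\beta}f\|^2=\|\alpha Pf-\beta Qf\|^2$ where you substitute directly --- same computation, slightly different packaging. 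Part (v) is where you genuinely diverge. The paper's argument is soft and short: it takes the family $\beta_c(z)=cz+(1-c)\bar z$, notes that $c\mapsto\|S_{1,\beta_c}\|$ is continuous by (i), evaluates the endpoints $\|S_{1,\bar z}\|=1$ by (iii) and $\|S_{1,z}\|=\sqrt 2$ by (iv), and invokes the intermediate value theorem, so the resulting example is purely existential. You instead prove, for $|\alpha|=|\beta|=c$, the exact formula $\|S_{\alpha,\beta}\|^2=c^2+\|H\|$ with $u=\alpha\bar\beta$ and $Hg=Q(ug)$ a Hankel-type operator (your reduction of the cross-term supremum to $\tfrac12\|H\|$ via the split $\|Pf\|^2+\|Qf\|^2=1$ is correct), and then exhibit the explicit symbol $u=e^{i\epsilon\cos\theta}$: both of your estimates, $\|H\|\leq\|u-1\|_\infty=2\sin(\epsilon/2)<1$ and $\|H\|\geq|\hat u(-1)|=\tfrac{\epsilon}{2}(1+O(\epsilon^2))>0$, check out. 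One small correction to your closing aside: for an analytic Blaschke-factor symbol one gets $\|H\|=0$, not $\|H\|=\|u\|_\infty$ (it is the conjugates of nonconstant inner functions that give $\|H\|=1$, by a Nehari-distance argument); either way those candidates hit only the endpoint values, which supports your point that a spread-out symbol is needed. On balance your route is longer but buys more --- an explicit example and a closed-form norm in the unimodular-modulus case (recovering a special case of the Nakazi--Yamamoto formula quoted at the start of the paper's Section 3) --- while the paper's continuity-plus-IVT argument is shorter but non-constructive.
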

\begin{proof} (i) Proof follows from the fact that, for any $f\in L^2$,
\begin{eqnarray*}
||(S_{\alpha_n,\beta_n}-S_{\alpha,\beta}
)f||&=&||(\alpha_n-\alpha)Pf+(\beta_n-\beta)Qf||\leq ||\alpha_n-\alpha||_\infty||f||+ ||\beta_n-\beta||_\infty||f||.
\end{eqnarray*}
(ii)
\begin{eqnarray*}
||S_{\alpha,\beta}||&=&\sup_{f\in L^2,||f||=1}||S_{\alpha,\beta}f||\\&\geq &\sup_{f\in H^2,||f||=1}||\alpha f||\\&\geq &\sup_{f\in H^2,||f||=1}||P(\alpha f)||\\&= &\sup_{f\in H^2,||f||=1}||T_\alpha f||=||T_\alpha||.
\end{eqnarray*}
But we know that $||T_\alpha||=||\alpha||_\infty$. Therefore $||S_{\alpha,\beta}||\geq||\alpha||_\infty$. In a similar way, using the fact $||\tilde{T}_\beta||=||\beta||_\infty$ (the proof of this fact is similar to the proof of operator norm of the Toeplitz operator), we can prove that $||S_{\alpha,\beta}||\geq||\beta||_\infty$. Hence $||S_{\alpha,\beta}||\geq\textup{max}\big\{||\alpha||_\infty,||\beta||_\infty\big\}$.

For any $f\in L^2$,
\begin{eqnarray*}
||S_{\alpha,\beta}f||&\leq &||\alpha||_\infty||Pf||+||\beta||_\infty||Qf||\\&\leq &\big(||\alpha||_\infty^2+||\beta||_\infty^2\big)^{\frac{1}{2}}\big(||Pf||^2+||Qf||^2\big)^{\frac{1}{2}}\\&=& \sqrt{||\alpha||_\infty^2+||\beta||_\infty^2}||f||.
\end{eqnarray*}
Therefore $||S_{\alpha,\beta}||\leq\sqrt{||\alpha||^2_\infty+||\beta||^2_\infty}$.

(iii) Since $\alpha\bar\beta\in H^\infty$, for all $f\in L^2$ we have  $\langle \alpha Pf,\beta Qf\rangle=\langle\alpha\bar\beta,\overline{Pf}Qf\rangle=0.$ Thus
$$
||S_{\alpha,\beta}f||^2=||\alpha Pf||^2+||\beta Qf||^2\leq \textup{max}\{||\alpha||_\infty,||\beta||_\infty\}\big(||Pf||^2+||Qf||^2\big)=\textup{max}\{||\alpha||_\infty,||\beta||_\infty\}||f||^2.
$$
Therefore $||S_{\alpha,\beta}||\leq\textup{max}\{||\alpha||_\infty,||\beta||_\infty\}$ and hence, by (ii), $||S_{\alpha,\beta}||=\textup{max}\{||\alpha||_\infty,||\beta||_\infty\}$.

(iv) In view of (ii), it is enough to show that for some $f\in L^2$, $||S_{\alpha,\beta}f||^2=(||\alpha||^2_\infty+||\beta||^2_\infty)||f||^2$. By the given condition it follows that there is an $L^2$ function $f$ such that $\alpha Pf-\beta Qf=0$. Since $|\alpha|=|\beta|=$ constant, 
$$
\left(||\alpha||^2_\infty+||\beta||^2_\infty\right)||f||^2-||S_{\alpha,\beta} f||^2=||\alpha Pf-\beta Qf||^2=0.
$$ 

(v) For each $c\in[0,1]$, define $\beta_c(z)=cz+(1-c)\bar z$. Note that $c\rightarrow \beta_c$ is continuous from $[0,1]$ into $L^\infty$. Therefore, by (i), $c\rightarrow ||S_{1,\beta_c}||$ is continuous. Now, by (iii), $||S_{1,\beta_0}||=1$, and by (iv), $||S_{1,\beta_1}||=\sqrt{2}$. Therefore there exists $a\in(0,1) $ such that $||S_{1,\beta_a}||$ is strictly in between $1$ and $\sqrt 2$. But note that, for any $c\in[0,1]$, max$\{||1||,||\beta_c||_\infty\}=1$ and $\sqrt{||1||_\infty+||\beta_c||_\infty}=\sqrt{2}$. In particular, we conclude that
$$
\textup{max}\big\{||1||_\infty,||\beta_a||_\infty\big\}<||S_{1,\beta_a}||<\sqrt{||1||^2_\infty+||\beta_a||^2_\infty}.
$$
\end{proof}

\section{Commutators}
This section deals with the commutators of the operator $S_{z,\bar z}.$
It is easy to check
that $S^*_{z,\bar z}S_{z,\bar z}=I$. So the operator $S_{z,\bar z}$ is an isometry. 
\begin{defn} A function in  $L^2$ is said to be analytic if it is in $H^2,$  and it is said to be co-analytic if its conjugate is analytic.
\end{defn}
\begin{prop}
$S_{\alpha,\beta}$ commutes with $S_{z,\bar z}$ iff $\alpha$ is analytic and $\beta$ is co-analytic.
\end{prop}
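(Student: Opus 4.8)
The plan is to test commutativity on the orthonormal basis $\{z^n\}_{n\in\mathbb{Z}}$. Since $S_{\alpha,\beta}$ and $S_{z,\bar z}$ are both bounded and the finite linear combinations of the $z^n$ are dense in $L^2$, the two operators commute if and only if $S_{\alpha,\beta}S_{z,\bar z}z^n=S_{z,\bar z}S_{\alpha,\beta}z^n$ for every $n$. The decisive feature of $S_{z,\bar z}$ is that it never mixes the two half-spaces: for $n\geq 0$ one has $S_{z,\bar z}z^n=z^{n+1}$ (still analytic), while for $n\leq -1$ one has $S_{z,\bar z}z^n=z^{n-1}$ (still co-analytic). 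This lets me treat the two ranges of $n$ independently, and it lets me prove both directions of the equivalence at once by extracting necessary-and-sufficient conditions from each basis vector.

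For $n\geq 0$ I would compute both sides directly. On one hand $S_{\alpha,\beta}S_{z,\bar z}z^n=S_{\alpha,\beta}z^{n+1}=\alpha z^{n+1}$, since $n+1\geq 0$. On the other hand $S_{z,\bar z}S_{\alpha,\beta}z^n=S_{z,\bar z}(\alpha z^n)=zP(\alpha z^n)+\bar z Q(\alpha z^n)$. Writing $\alpha z^{n+1}=z\,\alpha z^n=zP(\alpha z^n)+zQ(\alpha z^n)$ and subtracting, the equality of the two sides collapses to $(z-\bar z)Q(\alpha z^n)=0$ a.e. Because $z-\bar z=2i\sin\theta$ is nonzero almost everywhere, this forces $Q(\alpha z^n)=0$, i.e. $\alpha z^n\in H^2$, for every $n\geq 0$. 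Taking $n=0$ gives $\alpha\in H^2$, and conversely if $\alpha$ is analytic then $\alpha z^n$ is analytic for all $n\geq 0$; thus the family of $n\geq 0$ equations holds if and only if $\alpha$ is analytic.

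The range $n\leq -1$ is settled by the mirror-image computation. Here $S_{\alpha,\beta}S_{z,\bar z}z^n=S_{\alpha,\beta}z^{n-1}=\beta z^{n-1}$, while $S_{z,\bar z}S_{\alpha,\beta}z^n=zP(\beta z^n)+\bar z Q(\beta z^n)$. Writing $\beta z^{n-1}=\bar z\,\beta z^n=\bar z P(\beta z^n)+\bar z Q(\beta z^n)$ and subtracting now yields $(\bar z-z)P(\beta z^n)=0$ a.e., hence $P(\beta z^n)=0$, i.e. $\beta z^n\in H^{2\perp}$, for all $n\leq -1$. Taking $n=-1$ gives $\widehat{\beta z^{-1}}(k)=\hat\beta(k+1)=0$ for all $k\geq 0$, that is $\hat\beta(j)=0$ for all $j\geq 1$, which is exactly the statement that $\beta$ is co-analytic; conversely a co-analytic $\beta$ satisfies $\beta z^n\in H^{2\perp}$ for every $n\leq -1$. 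Combining the two ranges, the commutator vanishes on the whole basis precisely when $\alpha$ is analytic and $\beta$ is co-analytic.

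I do not expect a serious obstacle. The only points requiring a little care are the passage from $(z-\bar z)Q(\alpha z^n)=0$ to $Q(\alpha z^n)=0$, which rests on $z-\bar z\neq 0$ a.e., and keeping track of the fact that $S_{z,\bar z}$ pushes the analytic and co-analytic indices in opposite directions so that the two half-spaces decouple cleanly; everything else is a short direct calculation on basis vectors.
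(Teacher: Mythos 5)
Your proof is correct and takes essentially the same route as the paper: the paper likewise verifies sufficiency by direct computation (on a general $f\in L^2$ rather than on basis vectors) and obtains necessity by testing the commutation relation on $1$ and $\bar z$ --- exactly your $n=0$ and $n=-1$ cases --- using the same key step that $(z-\bar z)Q\alpha=0$ forces $Q\alpha=0$ because $z-\bar z\neq 0$ a.e. The only cosmetic difference is your uniform treatment of all $n$ via the basis plus a density argument, which changes nothing of substance.
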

\begin{proof}
Let $\alpha$ be analytic and $\beta$ be co-analytic. $S_{z,\bar z}S_{\alpha,\beta}f=S_{z,\bar z}(\alpha Pf+\beta Qf)=z\alpha Pf + \bar{z} \beta Q f.$ On the other hand $S_{\alpha,\beta}S_{z,\bar z}f=S_{\alpha,\beta}(zPf+\bar zQf)=\alpha zPf + \bar{z} \beta Q f$. Therefore $S_{z,\bar z}S_{\alpha,\beta}=S_{\alpha,\beta}S_{z,\bar z}$. Conversely, let $S_{\alpha,\beta}$ commutes with $S_{z,\bar z}$. Then  $S_{z,\bar z}S_{\alpha,\beta} 1=S_{\alpha,\beta}S_{z,\bar z}1$ which gives $zP\alpha+\bar zQ\alpha=z\alpha=zP\alpha+zQ\alpha$ so that $\bar zQ\alpha=zQ\alpha$ or $Q\alpha=0$ or $\alpha$ is analytic. Again  $S_{z,\bar z}S_{\alpha,\beta}\bar z=S_{\alpha,\beta}S_{z,\bar z}\bar z$ which gives $zP(\beta \bar z)+\bar zQ(\beta \bar z)=\beta\bar z^2=\bar zP(\beta \bar z)+\bar zQ(\beta\bar z).$ Therefore we get $P(\beta\bar z)=0$ or $\beta\bar z\in H^{2\bot}$ or $\beta$ is co-analytic.
\end{proof}

\begin{thm}\label{commutators theorem}
Let $T$ be a bounded operator on $L^2$. Then $T=S_{\alpha,\beta}$ for some $\alpha,\beta\in L^\infty$ with $\alpha$ analytic and $\beta$ co-analytic iff the following two conditions hold :

\noindent\textup{(i)} $T$ commutes with $S_{z,\bar z}$,

\noindent\textup{(ii)} $T\big(H^2-\{0\}\big)$ intersects $H^2$ and $T\big(H^{2\bot}-\{0\}\big)$ intersects $H^{2\bot}$.
\end{thm}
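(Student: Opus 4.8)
The plan is to pass to the orthogonal decomposition $L^2=H^2\oplus H^{2\bot}$, with respect to which $S_{z,\bar z}=S_+\oplus S_-$, where $S_+$ is multiplication by $z$ on $H^2$ and $S_-$ is multiplication by $\bar z$ on $H^{2\bot}$; both are unilateral shifts (the latter with respect to the orthonormal basis $\{z^{-k}\}_{k\ge1}$). Writing $T$ as a block operator with entries $A:H^2\to H^2$, $B:H^{2\bot}\to H^2$, $C:H^2\to H^{2\bot}$, $D:H^{2\bot}\to H^{2\bot}$, condition \textup{(i)} becomes the four intertwining relations $AS_+=S_+A$, $DS_-=S_-D$, $BS_-=S_+B$, $CS_+=S_-C$, while condition \textup{(ii)} says precisely that $C$ and $B$ are both non-injective: for $f\in H^2$ one has $Tf\in H^2\iff Cf=0$, and for $g\in H^{2\bot}$, $Tg\in H^{2\bot}\iff Bg=0$.

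With this dictionary the forward implication is immediate: if $T=S_{\alpha,\beta}$ with $\alpha$ analytic and $\beta$ co-analytic, then the Proposition gives \textup{(i)}, and $\alpha f\in H^2$, $\beta g\in H^{2\bot}$ for all $f\in H^2$, $g\in H^{2\bot}$ force $C=B=0$, so \textup{(ii)} holds trivially. For the converse, the crux is to establish a dichotomy: an intertwiner of the two shifts is either injective or zero. For $C$, the relation $CS_+=S_-C$ reads $C(zf)=\bar z\,Cf$, so $Cz^m=z^{-m}\phi$ with $\phi:=C1\in H^{2\bot}$, and hence $Cf=\phi\,\check f$ for polynomials $f$, where $\check f(z)=\sum\hat f(m)z^{-m}$ is co-analytic. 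Now $\bar\phi\in H^2$, so by the F. and M. Riesz theorem $\phi$ is either $\equiv 0$ or nonzero a.e., and $\check f$ is co-analytic, hence (by the co-analytic form of the theorem noted after Theorem 1.1) $\equiv 0$ or nonzero a.e. Thus $Cf=0$ with $f\neq0$ forces $\check f\neq0$ a.e., whence $\phi=0$ and $C=0$. The same reasoning applied to $B$ — using $B(\bar zg)=z\,Bg$, i.e. $Bz^{-k}=z^{k-1}\chi$ with $\chi:=Bz^{-1}\in H^2$ — shows $B$ is injective or $0$. Therefore \textup{(ii)} forces $B=C=0$, and $T$ is block-diagonal.

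It then remains to identify the diagonal blocks. Since $A$ commutes with $S_+$, one gets $Az^m=z^m\,A1$, so $A$ is multiplication by $\alpha:=T1$; since $D$ commutes with $S_-$, one gets $Dz^{-k}=z^{-k}\beta$ with $\beta:=zTz^{-1}$, and the containment $Dz^{-1}=z^{-1}\beta\in H^{2\bot}$ shows $\beta$ is co-analytic. Hence $T=\alpha P+\beta Q=S_{\alpha,\beta}$ with $\alpha$ analytic and $\beta$ co-analytic. Finally, to see $\alpha,\beta\in L^\infty$ I would use the routine power estimate $\|\alpha^k\|_2=\|A^k1\|_2\le\|A\|^k$, giving $\|\alpha\|_\infty=\lim_k\|\alpha^k\|_2^{1/k}\le\|A\|<\infty$, and similarly for $\beta$; this is the standard argument promised at the end of Section 2.

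I expect the main obstacle to be making the dichotomy step fully rigorous: justifying the product formula $Cf=\phi\,\check f$ on all of $H^2$ (rather than only on polynomials) by density and boundedness, and running the pointwise-a.e. vanishing argument cleanly even though $\phi$ and $\check f$ lie only in $L^2$. The F. and M. Riesz theorem is exactly the tool that upgrades "$C$ has a nontrivial kernel" to "$C\equiv 0$," and getting that conversion clean is the heart of the proof.
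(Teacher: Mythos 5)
Your proposal is correct and follows essentially the same route as the paper: your block entries are exactly the paper's identities $Tf=fP\alpha+\tilde{f}Q\alpha$ on $H^2$ and $Tf=\widetilde{zf}P(\bar z\beta)+zfQ(\bar z\beta)$ on $H^{2\bot}$ (with $\phi=Q\alpha=C1$ and $\check f=\tilde f$), the density-plus-a.e.-subsequence extension you flag as the main obstacle is precisely how the paper passes from trigonometric polynomials to all of $H^2$ and $H^{2\bot}$, and condition (ii) together with the F.\ and M.\ Riesz theorem is used identically to force $Q\alpha=0$ and $P(\bar z\beta)=0$. The only cosmetic differences are your $2\times 2$ block bookkeeping and your finish of the $L^\infty$ step via $||\alpha||_\infty=\lim_k||\alpha^k||_2^{1/k}$, which rests on the same power estimate $||\gamma^n||\leq 1$ that the paper concludes with its set-of-positive-measure argument.
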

\begin{proof}
For the only if part, $(i)$ follows from the previous Proposition. $(ii)$ is clear; in fact more is true that both $H^2$ and $H^{2\bot}$ are invariant under $T$. For the converse part we follow the usual method. Assume that $T$ be a bounded operator satisfying conditions (i) and (ii). First note that $S^n_{z,\bar z}1=z^n$ and $S^{n}_{z,\bar z}\bar{z}=\bar z^{n+1}$, $n\geq 0$. Therefore, defining the $L^2$ functions $\alpha:=T1$, $\beta:=zT\bar z$ and using the fact that $T$ commutes with $S_{z,\bar z}$, we get
$$
T z^n=TS^n_{z,\bar z}1=S^n_{z,\bar z}T1=S^n_{z,\bar z}\alpha,
$$
$$
T \bar{z}^{n+1}=TS^n_{z,\bar z}\bar z=S^n_{z,\bar z}T\bar z=S^n_{z,\bar z}(\bar z\beta), \hspace {3mm} n\geq 0.
$$
But, it is easy to check that, for any $f$, $S^n_{z,\bar z}f=z^nPf+\bar z^nQf$. Using this in the above we get
$$
Tz^n=z^nP\alpha+\bar z^nQ\alpha,
$$
$$
T\bar{z}^{n+1}=z^nP(\bar z\beta)+\bar z^nQ(\bar z\beta),\hspace{3mm}n\geq 0.
$$
Define $\tilde{f}(z)=f(\bar z)$. Then the above two equations are clearly equivalent to saying that
\begin{eqnarray}\label{4.1}
Tf=fP\alpha+\tilde{f}Q\alpha,\hspace{3mm}\textup{for all trigonometric polynomial}\hspace{1mm} f\in H^2,
\end{eqnarray}
\begin{eqnarray}\label{4.2}
Tf=\widetilde{zf}P(\bar z\beta)+zfQ(\bar z\beta),\hspace{3mm}\textup{for all trigonometric polynomial}\hspace{1mm} f\in H^{2\bot}.
\end{eqnarray}
Now using the boundedness of $T$ we shall show that \ref{4.1} is
true for all $f$ in $H^2$ and \ref{4.2} is true for all $f$ in
$H^{2\bot}.$ Let $f\in H^2$. Then there is a sequence of analytic
trigonometric polynomials $f_n$ such that $f_n\rightarrow f$ in
$L^2$.
By \ref{4.1}, $Tf_n=f_nP\alpha+\tilde f_nQ\alpha $. Now, along a subsequence $f_{n_k}\rightarrow f$ point wise almost everywhere. Therefore $\big(f_{n_k}P\alpha+\tilde f_{n_k}Q\alpha\big)\rightarrow (fP\alpha+\tilde fQ\alpha)$ point wise almost everywhere. But $T$ being bounded $Tf_n\rightarrow Tf$ in $L^2$. So we conclude that $Tf=fP\alpha+\tilde{f}Q\alpha$ almost everywhere. In a similar way we can prove that \ref{4.2} is true for $f$ in $H^{2\bot}.$ So we  got
\begin{eqnarray}\label{4.3}
Tf=fP\alpha+\tilde{f}Q\alpha,\hspace{3mm}\textup{for all }\hspace{1mm} f\in H^2,
\end{eqnarray}
\begin{eqnarray}\label{4.4}
Tf=\widetilde{zf}P(\bar z\beta)+zfQ(\bar z\beta),\hspace{3mm}\textup{for all}\hspace{1mm} f\in H^{2\bot}.
\end{eqnarray}
Now, by condition (ii), there is
a non zero function $f_0$ in $H^2$ such that $Tf_0\in H^2$.
Therefore for $f = f_0$ equation (\ref{4.3})  implies that $\tilde{f}
_0Q\alpha=0$ or $Q\alpha=0$ or $\alpha$ is analytic. In a similar
way, using the fact $T\big(H^{2\bot}-\{0\}\big)$ intersects
$H^{2\bot}$ in equation (\ref{4.4}), we get $P(\bar z\beta)=0$ or $\bar z\beta\in H^{2\bot}$ or $\beta$ is co-analytic. Therefore (\ref{4.3}) gives $Tf=\alpha f$ if $f\in H^2$; and \ref{4.4} gives $Tf=\beta f$ if $f\in H^{2\bot}$. So, we can write $Tf=\alpha Pf+\beta Qf$ for all $f\in L^2$. Hence $T=S_{\alpha,\beta}.$ It remains to prove that $\alpha,\beta\in L^\infty$. The proof of these facts are exactly similar to that for a Toeplitz operators. But for the shake of completeness we present the proof of $\beta\in L^\infty$. The proof that $\alpha\in L^\infty$ will be
similar. If $T$ is zero operator the result is trivial. So assume
that $T\neq 0$.
Define $\gamma=\frac{\beta}{||T||}$. Since $\beta\in L^2$ so is $\gamma \theta$. If $f\in H^{2\bot}$, $Tf=\beta f$ and hence $||\beta f||=||Tf||\leq||T||||f||$. Therefore $||\gamma f||\leq||f||$. Putting $f=\bar z$ we get $||\gamma||=||\gamma \bar z||\leq 1$. Again $||\gamma^2||=||\gamma^2\bar z||=||\gamma(\gamma \bar z)||\leq ||\gamma\bar z||\leq 1$. Using induction we can prove that $||\gamma^n||\leq 1$ for all positive integers $n$. Now we claim that $||\gamma||_\infty\leq 1$. If not there is an $\epsilon>0$ such that the set
$$
E:=\{e^{i\theta}:|\gamma(e^{i\theta})|\geq 1+\epsilon\}
$$
has positive measure. But then one can easily show that $||\gamma^n||\geq(1+\epsilon)^n|E|$, where $|E|$ denotes the measure of $E$. Since $||\gamma^n||\leq 1$ for all $n$, $|E|$ must be zero which is a contradiction. Therefore our claim that $||\gamma||_{\infty}\leq 1$ is true, and consequently $\beta$ is in $L^\infty$.
\end{proof}
\begin{rem}
We want to remark that for the if part in the previous theorem we need the condition (ii). It is possible to get an bounded operator $T$ which commutes with $S_{z,\bar z}$ but $T$ can not be written in the form $S_{\alpha,\beta}$ for any $\alpha,\beta\in L^\infty$. Here we provide one such example. Define $T$ on the orthonormal basis as : $Tz^n=z^{n}+\bar z^{n+1}$ if $n\geq 0$  and $Tz^n=0$ if $n$ is negative. Extend $T$ by linearity to the space of all trigonometric polynomials. It is not hard to see that, for a trigonometric polynomial $f$, $Tf=Pf+\bar{z}\widetilde{Pf}$, so that $||Tf||\leq ||Pf||+||\bar z\widetilde{Pf}||\leq 2||f||$, since $||\widetilde{Pf}||=||Pf||$. therefore we can extend $T$ as a bounded operator on $L^2$. We continue to call this extended bounded operator as $T$. Now we show that $T$ commutes with $S_{z,\bar z}$. It is enough to show this only on the orthonormal basis $\{z^n:n\in\mathbb{Z}\}$, where $\mathbb{Z}$ denotes the set of all integers. If $n\geq 0$, then $TS_{z,\bar z}z^n=Tz^{n+1}=z^{n+1}+\bar{z}^{n+2}=S_{z,\bar z}(z^n+\bar z^{n+1})=S_{z,\bar z}Tz^n$. On the other hand if $n$ is negative both $TS_{z,\bar z}z^n$ and $S_{z,\bar z}Tz^n$ are zero.
\end{rem}

Form the above remark we have seen that only the condition that $T$ commutes with $S_{z,\bar z}$ does not imply that $T$ is of the form $S_{\alpha,\beta}$. But if we increase the set of commutators a little more it is possible to get the desired result with out imposing the condition (ii). In fact the condition (ii) will hold automatically in that case.

\begin{thm}
A bounded operator $T$ commutes with both $S_{z,0}$ and $S_{0,\bar z}$ iff $T=S_{\alpha,\beta}$ for some $\alpha , \beta \in L^\infty$ where $\alpha$ is analytic and $\beta$ is co-analytic.
\end{thm}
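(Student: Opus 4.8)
The plan is to reduce both directions to Theorem \ref{commutators theorem}, using the elementary but crucial decomposition
$$
S_{z,\bar z}=S_{z,0}+S_{0,\bar z},
$$
which holds because $S_{z,0}f+S_{0,\bar z}f=zPf+\bar zQf=S_{z,\bar z}f$ for every $f\in L^2$. Consequently, if $T$ commutes with each of $S_{z,0}$ and $S_{0,\bar z}$, then it commutes with their sum $S_{z,\bar z}$, and this already secures condition (i) of Theorem \ref{commutators theorem}. The whole content of the theorem, as the preceding Remark indicates, is that the two \emph{separate} commutation relations also deliver condition (ii) for free; this is the step I would flag as the heart of the argument.

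For the if part I would first record the two kernels $\ker S_{0,\bar z}=H^2$ and $\ker S_{z,0}=H^{2\bot}$, which follow at once from $S_{0,\bar z}g=\bar zQg$ and $S_{z,0}g=zPg$ together with the fact that $z$ is unimodular (so $\bar zQg=0\iff Qg=0$, and $zPg=0\iff Pg=0$). Now take any $f\in H^2$: then $S_{0,\bar z}f=0$, so commutativity gives $S_{0,\bar z}(Tf)=T(S_{0,\bar z}f)=0$, whence $Tf\in\ker S_{0,\bar z}=H^2$. Thus $H^2$ is invariant under $T$, and the symmetric computation with $S_{z,0}$ on $H^{2\bot}$ shows $H^{2\bot}$ is invariant as well. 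In particular, for any nonzero $f_0\in H^2$ we have $Tf_0\in H^2$ and for any nonzero $g_0\in H^{2\bot}$ we have $Tg_0\in H^{2\bot}$, which is exactly condition (ii). Having verified (i) and (ii), Theorem \ref{commutators theorem} yields $T=S_{\alpha,\beta}$ with $\alpha$ analytic and $\beta$ co-analytic.

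For the only if part, suppose $T=S_{\alpha,\beta}$ with $\alpha\in H^\infty$ and $\beta$ co-analytic. The cleanest route is a direct check on an arbitrary $f\in L^2$: since $\alpha Pf\in H^2$ and $\beta Qf\in H^{2\bot}$ (the latter because a co-analytic symbol times an $H^{2\bot}$ function keeps only strictly negative Fourier frequencies), one computes $S_{z,0}S_{\alpha,\beta}f=z\alpha Pf=S_{\alpha,\beta}S_{z,0}f$ and likewise $S_{0,\bar z}S_{\alpha,\beta}f=\bar z\beta Qf=S_{\alpha,\beta}S_{0,\bar z}f$. Alternatively, since $\alpha$ analytic and $\beta$ co-analytic make both $H^2$ and $H^{2\bot}$ invariant under $T$, the operator $T$ commutes with $P$ and $Q$; combined with the Proposition preceding Theorem \ref{commutators theorem} (which gives that $T$ commutes with $S_{z,\bar z}$) and the identities $S_{z,0}=S_{z,\bar z}P=PS_{z,\bar z}$, $S_{0,\bar z}=S_{z,\bar z}Q=QS_{z,\bar z}$, this produces the two commutation relations simultaneously.

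The only genuinely substantive point is the invariance deduction in the if part: it is precisely the observation that $H^2$ and $H^{2\bot}$ are the kernels of $S_{0,\bar z}$ and $S_{z,0}$ that converts the enlarged commutation hypothesis into the invariance needed to recover condition (ii) automatically, in contrast to the situation described in the preceding Remark. No delicate analysis (F.\ and M.\ Riesz, approximation by polynomials, or the $L^\infty$ estimates) is needed beyond what is already packaged inside Theorem \ref{commutators theorem}, so I expect the proof to be short once the kernel computation is in place.
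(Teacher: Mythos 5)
Your proof is correct and follows essentially the same route as the paper: both directions reduce to Theorem \ref{commutators theorem}, with condition (i) obtained from the decomposition $S_{z,\bar z}=S_{z,0}+S_{0,\bar z}$, condition (ii) from the fact that $S_{0,\bar z}$ annihilates $H^2$ and $S_{z,0}$ annihilates $H^{2\bot}$, and the only-if part checked directly. The single (cosmetic) difference is that you derive full invariance of $H^2$ and $H^{2\bot}$ under $T$ from the kernel identities $\ker S_{0,\bar z}=H^2$ and $\ker S_{z,0}=H^{2\bot}$, whereas the paper applies the same commutation relations only at the vectors $1$ and $\bar z$ to produce the witnesses $T1\in H^2$ and $T\bar z\in H^{2\bot}$ that condition (ii) requires.
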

\begin{proof}
If $\alpha,\beta\in L^\infty$ with $\alpha$ analytic and $\beta$ co-analytic then $S_{z,0}S_{\alpha,\beta}f=z\alpha Pf$ and $S_{\alpha,\beta}S_{z,0}f=\alpha zPf$ for all $f$ in $L^2$ so that $S_{z,0}T=TS_{z,0}$. Similarly we can show that $T = S_{\alpha,\beta}$ commutes with $S_{0,\bar z}.$ Conversely, let $T$ commutes with both $S_{z,0}$ and $S_{0,\bar z}$. In view of the previous theorem it is enough to show that $T$ satisfies the conditions (i) and (ii) there. Since $S_{z,\bar z}=S_{z,0}+S_{0,\bar z}$, clearly $T$ commutes with $S_{z,\bar z}$ and thus satisfies condition (i). Now $S_{0,\bar z}T1=TS_{0,\bar z}1$ gives $\bar zQ(T1)=0$ or $Q(T1)=0$ or $T1\in H^2$. Again, $S_{z,0}T\bar z=TS_{z,0}\bar z$ gives $zP(T\bar z)=0$ or $P(T\bar z)=0$ or $T\bar z$ is in $H^{2,\bot}$. Hence $T$ satisfies condition (ii)  and we are done.
\end{proof}
\section{Invariant suspaces}
In this section we give description of invariant subspaces and reducings subspace of $S_{z,\bar z}$.

\begin{thm}
$M\subset L^2$ is an invariant subspace of $S_{z,\bar z}$ iff there exist inner functions $\phi$ and $\psi$ such that $M=\phi H^2\oplus\bar \psi H^{2\bot}.$
\end{thm}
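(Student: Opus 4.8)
The plan is to exploit the fact that $S_{z,\bar z}$ leaves both $H^2$ and $H^{2\perp}$ invariant and acts on each as a pure shift. Indeed, for $f\in H^2$ one has $S_{z,\bar z}f=zf$, so $S_{z,\bar z}|_{H^2}$ is multiplication by $z$, the usual unilateral shift; and for $f\in H^{2\perp}$ one has $S_{z,\bar z}f=\bar z f$, which is again a pure shift, now with wandering vector $\bar z$. These two shifts are unitarily equivalent through the conjugate-linear isometry $J\colon H^{2\perp}\to H^2$ defined by $Jg=\bar z\bar g$, which satisfies $J(\bar z g)=z(Jg)$; thus $J$ carries the $\bar z$-shift on $H^{2\perp}$ to the $z$-shift on $H^2$, and $J^{-1}(\psi H^2)=\bar\psi H^{2\perp}$ for every inner $\psi$. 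The strategy is to apply Beurling's theorem on each summand, transporting the $H^{2\perp}$ statement through $J$.

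For the ``if'' direction I would verify invariance directly. Since $\phi$ and $\psi$ are inner, multiplication by $\phi$ is isometric on $H^2$ and by $\bar\psi$ is isometric on $H^{2\perp}$, so $\phi H^2$ and $\bar\psi H^{2\perp}$ are closed; they lie inside $H^2$ and $H^{2\perp}$ respectively, so their sum is genuinely orthogonal. On $\phi H^2$ the operator acts as $z\phi H^2=\phi(zH^2)\subseteq\phi H^2$, and on $\bar\psi H^{2\perp}$ it acts as $\bar z\bar\psi H^{2\perp}=\bar\psi(\bar z H^{2\perp})\subseteq\bar\psi H^{2\perp}$; hence $M=\phi H^2\oplus\bar\psi H^{2\perp}$ is $S_{z,\bar z}$-invariant.

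For the ``only if'' direction, let $M$ be invariant. The crucial reduction is to show that $M$ splits along $L^2=H^2\oplus H^{2\perp}$, i.e.\ $M=(M\cap H^2)\oplus(M\cap H^{2\perp})$. Granting this, $M\cap H^2$ is a shift-invariant subspace of $H^2$, so by Beurling's theorem it is $\{0\}$ or $\phi H^2$ for an inner $\phi$; and $M\cap H^{2\perp}$ is invariant under the $\bar z$-shift, so $J(M\cap H^{2\perp})$ is shift-invariant in $H^2$, equals $\psi H^2$ for an inner $\psi$ (or $\{0\}$), and therefore $M\cap H^{2\perp}=J^{-1}(\psi H^2)=\bar\psi H^{2\perp}$; combining gives the stated form, the degenerate cases being absorbed by allowing a summand to be trivial. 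I expect the splitting to be the main obstacle: one must show that for $f\in M$ both $Pf$ and $Qf$ lie in $M$. The natural route is through the wandering subspace $W=M\ominus S_{z,\bar z}M$ (of dimension at most $2$, since $S_{z,\bar z}$ is a shift of multiplicity $2$) together with $M=\bigoplus_{n\ge 0}S_{z,\bar z}^nW$; the real content is to prove that $W$ itself respects the decomposition, $W=(W\cap H^2)\oplus(W\cap H^{2\perp})$, after which the identity $S_{z,\bar z}^nf=z^nPf+\bar z^nQf$ propagates the splitting to all of $M$. This is the delicate point where I would concentrate the effort, and I would want to check carefully that a one-dimensional wandering subspace spanned by a vector with nontrivial components in both $H^2$ and $H^{2\perp}$ cannot occur.
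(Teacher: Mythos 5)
Your ``if'' direction is correct and matches the paper's, and you located the crux of the ``only if'' direction exactly: everything hinges on the splitting $M=(M\cap H^2)\oplus(M\cap H^{2\bot})$. But the delicate point you flagged is not merely delicate --- it is false, and the configuration you hoped to rule out (a one-dimensional wandering subspace with nonzero components in both halves) actually occurs. Concretely, let
\[
M=\overline{\operatorname{span}}\,\{\,z^n+\bar z^{\,n+1}\,:\,n\geq 0\,\}.
\]
The generators are orthogonal with norm $\sqrt 2$, so $M$ is closed and its general element is $\sum_{n\geq 0}c_n\big(z^n+\bar z^{\,n+1}\big)$ with $(c_n)\in\ell^2$. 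Since $S_{z,\bar z}\big(z^n+\bar z^{\,n+1}\big)=z^{n+1}+\bar z^{\,n+2}$, $M$ is invariant; yet such an element lies in $H^2$ (or in $H^{2\bot}$) only when every $c_n$ vanishes, so $M\cap H^2=M\cap H^{2\bot}=\{0\}$ although $M\neq\{0\}$. Any nonzero space of the form $\phi H^2\oplus\bar\psi H^{2\bot}$ contains the nonzero vector $\phi\in H^2$, so $M$ is not of that form: it is a counterexample to the ``only if'' direction, with wandering vector $1+\bar z$. (Note that $M$ is precisely the closed range of the operator $T$ constructed in Remark 4.4 of this very paper; its invariance also follows from $TS_{z,\bar z}=S_{z,\bar z}T$.)

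You should also know that the paper's own proof makes the identical unjustified move, silently: its opening line ``Write $M=M_1\oplus M_2$, where $M_1\subset H^2$ and $M_2\subset H^{2\bot}$'' assumes exactly the splitting, which for a closed subspace of $L^2=H^2\oplus H^{2\bot}$ --- even an invariant one, by the example above --- need not exist. So your instinct to concentrate the effort there, and your honest refusal to claim the step, constitute the correct assessment: the gap cannot be filled, and the theorem is false as stated. What is true is the Beurling--Lax--Halmos description: $S_{z,\bar z}$ is a pure shift of multiplicity two (its wandering space is $L^2\ominus S_{z,\bar z}L^2=\operatorname{span}\{1,\bar z\}$, and $\bigcap_n S^n_{z,\bar z}L^2=\{0\}$ since $S^n_{z,\bar z}L^2=z^nH^2\oplus\bar z^nH^{2\bot}$), so its invariant subspaces are $\Theta H^2_E$ with $E\subset\mathbb{C}^2$ and $\Theta$ an operator-valued inner function; the spaces $\phi H^2\oplus\bar\psi H^{2\bot}$ correspond only to diagonal $\Theta$, i.e.\ to the invariant subspaces that happen to split. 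Your reduction of the split case to Beurling's theorem via the conjugation $J$ is correct and is essentially the paper's argument for that (valid) half.
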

\begin{proof}
Let $M$ be an invariant subspace of $S_{z,\bar z}$. Write $M=M_1\oplus M_2$, where $M_1\subset H^2$ and $M_2\subset H^{2\bot}$.  Note that $S_{z,\bar z}M=zM_1\oplus \bar z M_2$ which is contained in $M=M_1\oplus M_2$. Therefore $zM_1\subset M_1$ and $\bar zM_2\subset M_2$ or equivalently both  $M_1$ and $\bar{z} \bar{M_2}$ are forward shift invariant subspace of $H^2.$  Therefore, by Beurling`s Theorem there exists inner functions $\phi$ and $\psi$ such that $M_1=\phi H^2$ and $\bar z\bar M_2=\psi H^2$ or $ M_2=\bar\psi \bar z\bar H^2=\bar\psi H^{2\bot}$. Conversely, let $\phi$ and $\psi$  be two inner functions. By Beurling`s theorem $\phi H^2$ is invariant under the forward shift. Since $\phi H^2$ is a subspace of $H^2$ and $S_{z,\bar z}\mid_{H^2}$ is nothing but the forward shift on $H^2$, we can say that $\phi H^2$ is invariant under $S_{z,\bar z}$. Again, $\psi z$ being an inner function, by Beurling`s Theorem $\psi zH^2$ is an invariant subspace under forward shift. Equivalently $\bar\psi H^{2\bot}=\bar\psi\overline{zH^2}$ is invariant under multiplication by $\bar{z}.$ Hence $\bar\psi H^{2\bot}$ is invariant under $S_{z,\bar z}$. Therefore $\phi H^2\oplus\bar \psi H^{2\bot}$ is an invariant subspace of $S_{z,\bar z}.$
\end{proof}

\begin{thm}
$M\subset L^2$ is a reducing subspace of $S_{z,\bar z}$ iff $M$ is $L^2$ or $\{0\}$ or $H^2$ or $H^{2\bot}$.
\end{thm}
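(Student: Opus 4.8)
The plan is to prove both implications, leaning on the description of the invariant subspaces of $S_{z,\bar z}$ obtained in the preceding theorem together with the classical fact, recorded in the introduction, that the only reducing subspaces of the unilateral shift on $H^2$ are $\{0\}$ and $H^2$.

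For the ``if'' direction I would simply check the four candidates. The cases $\{0\}$ and $L^2$ are trivial, and for $H^2$ I would observe that $S_{z,\bar z}$ maps $H^2$ into $zH^2\subseteq H^2$ and maps $H^{2\bot}$ into $\bar z H^{2\bot}\subseteq H^{2\bot}$; hence both $H^2$ and its orthogonal complement $H^{2\bot}$ are invariant, so each of them is reducing.

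For the ``only if'' direction, suppose $M$ is reducing, so that $M$ and $M^\bot$ are both invariant under $S_{z,\bar z}$. By the previous theorem $M$ splits as $M=M_1\oplus M_2$ with $M_1\subseteq H^2$ a forward-shift invariant subspace and $M_2\subseteq H^{2\bot}$ satisfying $\bar z M_2\subseteq M_2$. Because this decomposition respects $L^2=H^2\oplus H^{2\bot}$, the complement splits accordingly as $M^\bot=N_1\oplus N_2$ with $N_1=H^2\ominus M_1$ and $N_2=H^{2\bot}\ominus M_2$. Writing a general element of $M^\bot$ as $g_1+g_2$ with $g_1\in N_1$ and $g_2\in N_2$, and using $S_{z,\bar z}(g_1+g_2)=zg_1+\bar z g_2$ with $zg_1\in H^2$ and $\bar z g_2\in H^{2\bot}$, the invariance of $M^\bot$ is seen to be equivalent to the two conditions $zN_1\subseteq N_1$ and $\bar z N_2\subseteq N_2$.

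Now comes the crux. Since $N_1$ is the orthogonal complement in $H^2$ of the forward-shift invariant subspace $M_1$, it is automatically invariant under the backward shift; combined with $zN_1\subseteq N_1$ this makes $N_1$ a reducing subspace of the unilateral shift, so $N_1\in\{\{0\},H^2\}$ and therefore $M_1\in\{H^2,\{0\}\}$. The same argument applied to $N_2$ inside $H^{2\bot}$ — where multiplication by $\bar z$ is antiunitarily equivalent to the shift via $f\mapsto\overline{zf}$ — gives $M_2\in\{H^{2\bot},\{0\}\}$. The four resulting combinations of $M_1$ and $M_2$ yield exactly $\{0\}$, $H^2$, $H^{2\bot}$ and $L^2$, which finishes the proof. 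The step I expect to require the most care is precisely this reduction to the shift: verifying cleanly that the complement $N_1$ is backward-shift invariant, and transporting the argument to $H^{2\bot}$ through the conjugation intertwining multiplication by $\bar z$ with the shift.
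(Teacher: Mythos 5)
Your argument has the same skeleton as the paper's proof: decompose $M$ as $M_1\oplus M_2$ compatibly with $L^2=H^2\oplus H^{2\bot}$, and reduce to the classical fact that the unilateral shift has only the trivial reducing subspaces. The one real difference is how the second invariance condition is extracted: you use invariance of $M^\bot$, getting $zN_1\subset N_1$ for $N_1=H^2\ominus M_1$ (which, with the automatic backward-shift invariance of the complement of a shift-invariant subspace, makes $N_1$ reducing), whereas the paper applies $S^*_{z,\bar z}M\subset M$ directly, obtaining $P(\bar zM_1)\subset M_1$ and then a hand computation to transport the condition on $M_2$ to $\bar z\bar M_2\subset H^2$. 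Granting the decomposition, both variants are sound, and your complement route is if anything cleaner than the paper's computation for $M_2$.

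However, the decomposition step is a genuine gap, and it is the same gap the paper itself has. You justify $M=M_1\oplus M_2$ with $M_1\subset H^2$, $M_2\subset H^{2\bot}$ by citing the preceding invariant-subspace theorem; but the paper's proof of that theorem opens with exactly the same unjustified ``Write $M=M_1\oplus M_2$,'' so the appeal is circular --- and the step cannot be repaired, because a reducing subspace need not split. Indeed $S_{z,\bar z}$ is a unilateral shift of multiplicity two (its wandering space $\ker S^*_{z,\bar z}=\mathrm{span}\{1,\bar z\}$ is two-dimensional), so its reducing subspaces correspond to arbitrary subspaces of $\mathbb{C}^2$. Concretely, set $e_n=z^n+\bar z^{n+1}$ for $n\geq 0$ and let $M$ be the closed span of the $e_n$. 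Then $S_{z,\bar z}e_n=e_{n+1}$, and since $S^*_{z,\bar z}f=P(\bar zf)+Q(zf)$ one checks $S^*_{z,\bar z}e_n=e_{n-1}$ for $n\geq 1$ and $S^*_{z,\bar z}e_0=0$; hence $M$ is reducing. Yet every $f\in M$ satisfies $\hat f(n)=\hat f(-n-1)$ for all $n\geq 0$, so $1\notin M$ and $\bar z\notin M$, while $e_0\in M$; thus $M$ is none of $\{0\}$, $H^2$, $H^{2\bot}$, $L^2$, and the theorem as stated is false. (The same example, being invariant but containing no nonzero analytic function, also contradicts the preceding theorem, since $\phi H^2\oplus\bar\psi H^{2\bot}$ contains $\phi$.) Both your proof and the paper's become correct precisely under the additional hypothesis that $M$ is compatible with the splitting $L^2=H^2\oplus H^{2\bot}$, i.e. $PM\subset M$; without it, the step fails for the reason exhibited.
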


\begin{proof}
The sufficient part is easy to see. For the necessary part, let $M$ is a reducing subspace of $S_{z,\bar z}$. Write $M=M_1\oplus M_2$, where $M_1$ and $M_2$ are subspaces of $H_2$ and $H^{2\bot}$ respectively. Since $M$ is invariant under $S_{z,\bar z}$, by the proof of the previous theorem, both $M_1$ and $\bar z\bar M_2$ are forward shift invariant subspaces of $H^2$. Again, we have $S^*_{z,\bar z}M\subset M_1\oplus M_2$ or $P(\bar zM)+Q(z M)\subset M_1\oplus M_2$ so that $P(\bar z M)\subset M_1$ and $Q(zM)\subset M_2$. But $P(\bar zM)=P(\bar zM_1)$ and $Q(z M)=Q(zM_2)$. Therefore $M_1$ is an invariant subspace under the adjoint of forward shift (on $H^2$) . Thus it is a reducing subspace of forward shift (on $H^2$) and hence it must be equal to $H^2$ or $\{0\}$. On the other hand, since $Q(z M_2)\subset M_2$, one can show that $P( \bar z\bar z \bar M_2)\subset \bar z \bar M_2$. In fact, if $f\in \bar M_2$ then $\bar f\in M_2$ so that $Q(z\bar f)\in M_2$ or $z\big(\bar f-\hat{\bar f}(-1)\bar z\big)\in M_2$ or   $\big(\bar f-\hat{\bar f}(-1)\bar z\big)\in \bar zM_2$ or $\big( f-\bar{\hat{\bar f}}(-1) z\big)\in  z\bar M_2$ or  $\bar z^2f-\bar{\hat{\bar f}}(-1) \bar z\in \bar z\bar M_2$ or $\bar z^2f-\hat{f}(1) \bar z\in \bar z\bar M_2$ which implies $P(\bar z^2f)\subset \bar z M_2$ as desired. Therefore, $\bar z\bar M_2$ is also a reducing subspace of the forward shift operator on $H^2$ and hence $\bar z\bar M_2=H^2$ or $\{0\}$ or equivalently $M_2=\bar z \overline{H^2}=H^{2\bot}$ or $\{0\}$. But we already had that $M_1=H^2$ or $\{0\}$. So we conclude that $M$ is $L^2$ or $\{0\}$ or $H^2$ or $H^{2\bot}$.
\end{proof}

\section{Composition of two operators}
This section deals with composition of two operators of the form $S_{\alpha_1,\beta_1}$ and $S_{\alpha_2,\beta_2}.$ We shall show that when such a composition is again of the form $S_{\alpha, \beta}.$ We also study their commutativity.

\begin{thm}\label{thm-multiplicati is of the same form}
Let $\alpha_1,\beta_1,\alpha_2,\beta_2\in L^\infty$. Then $S_{\alpha_1,\beta_1}S_{\alpha_2,\beta_2}=S_{\alpha,\beta}$ for some $\alpha,\beta\in L^\infty$ iff either $\alpha_1=\beta_1$, or $\alpha_2$ is analytic and $\beta_2$ is co-analytic. In that case $\alpha=\alpha_1\alpha_2$ and $\beta=\beta_1\beta_2$.
\end{thm}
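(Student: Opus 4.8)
The plan is to compute the composition $S_{\alpha_1,\beta_1}S_{\alpha_2,\beta_2}$ explicitly on an arbitrary $f\in L^2$, and then use the matrix characterization from the theorem in Section 2 (or equivalently a direct analysis of the action on basis vectors) to determine when the result is again of the form $S_{\alpha,\beta}$. First I would write, for $f\in L^2$,
\[
S_{\alpha_1,\beta_1}S_{\alpha_2,\beta_2}f = S_{\alpha_1,\beta_1}\big(\alpha_2 Pf + \beta_2 Qf\big) = \alpha_1 P(\alpha_2 Pf + \beta_2 Qf) + \beta_1 Q(\alpha_2 Pf + \beta_2 Qf).
\]
The obstruction to this being $S_{\alpha,\beta}$ is precisely the presence of the "cross terms" $\alpha_1 P(\beta_2 Qf)$ and $\beta_1 Q(\alpha_2 Pf)$, together with the way $P(\alpha_2 Pf)$ and $Q(\beta_2 Qf)$ fail to collapse to multiplication operators. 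For the composition to equal $\alpha Pf+\beta Qf$, I would expect the natural candidates $\alpha=\alpha_1\alpha_2$ and $\beta=\beta_1\beta_2$, so the real content is showing the remaining terms vanish exactly under the stated conditions.

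\textbf{The "if" direction.}
I would first dispose of the two sufficient conditions. If $\alpha_2$ is analytic and $\beta_2$ is co-analytic, then $S_{\alpha_2,\beta_2}$ commutes with $S_{z,\bar z}$ (by the Proposition in Section 4) and, more usefully, $\alpha_2 Pf\in H^2$ and $\beta_2 Qf\in H^{2\bot}$ for every $f$; hence $P(\alpha_2 Pf)=\alpha_2 Pf$, $Q(\alpha_2 Pf)=0$, $P(\beta_2 Qf)=0$, $Q(\beta_2 Qf)=\beta_2 Qf$. Substituting into the displayed formula collapses everything to $\alpha_1\alpha_2 Pf+\beta_1\beta_2 Qf=S_{\alpha_1\alpha_2,\beta_1\beta_2}f$. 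If instead $\alpha_1=\beta_1$, then $S_{\alpha_1,\beta_1}=\alpha_1(P+Q)=\alpha_1 I$ acts as multiplication by $\alpha_1$, so the composition is $\alpha_1\alpha_2 Pf+\alpha_1\beta_2 Qf=S_{\alpha_1\alpha_2,\alpha_1\beta_2}$, again of the required form with the stated product formulas (noting $\beta=\beta_1\beta_2$ since $\beta_1=\alpha_1$).

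\textbf{The "only if" direction — the main obstacle.}
This is where the work lies. Assuming $S_{\alpha_1,\beta_1}S_{\alpha_2,\beta_2}=S_{\alpha,\beta}$, I would test the identity against the basis, or better, feed in well-chosen $f$ to isolate the cross terms. Evaluating at $f=1$ (so $Pf=1$, $Qf=0$) gives $S_{\alpha,\beta}1=\alpha=\alpha_1 P\alpha_2+\beta_1 Q\alpha_2$; since $S_{\alpha,\beta}1=\alpha\cdot 1=\alpha$ should also equal $\alpha_1\alpha_2$ only when $Q\alpha_2$-type terms cooperate, this already forces a relation. The cleaner route is to demand that the operator kills $H^2$-to-$H^{2\bot}$ and $H^{2\bot}$-to-$H^2$ leakage for \emph{all} $f$: the requirement that $Q(S_{\alpha_1,\beta_1}S_{\alpha_2,\beta_2}f)=\beta Qf$ and $P(\cdots)=\alpha Pf$ must hold identically. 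Writing out $Q\big(\alpha_1 P(\alpha_2 Pf)\big)+Q\big(\beta_1 Q(\alpha_2 Pf)\big)$ and comparing, one extracts the functional equations $\beta_1 Q(\alpha_2 g)=0$ forced for all $g\in H^2$ \emph{unless} $\alpha_1=\beta_1$, and analogously on the other side. The key algebraic step is to show that the off-diagonal vanishing conditions, after invoking the F.\ and M.\ Riesz Theorem (Theorem 1.1) to rule out zero sets of positive measure, split into exactly the dichotomy "$\alpha_1=\beta_1$" or "$\alpha_2\in H^2$ and $\beta_2\in H^{2\bot}$." I expect the hard part to be organizing these cross-term equations so that the two disjoint cases emerge cleanly rather than as an unwieldy system; the F.\ and M.\ Riesz Theorem is the tool that converts "vanishes on a positive-measure set" into "vanishes identically," and I would lean on it to force analyticity/co-analyticity of $\alpha_2,\beta_2$ in the case $\alpha_1\neq\beta_1$.
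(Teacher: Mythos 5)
Your ``if'' direction is correct and is essentially the paper's. The ``only if'' direction, however, contains a genuine gap built on a false premise. You propose to demand that the composition ``kills leakage,'' i.e.\ that $P(S_{\alpha_1,\beta_1}S_{\alpha_2,\beta_2}f)=\alpha Pf$ and $Q(S_{\alpha_1,\beta_1}S_{\alpha_2,\beta_2}f)=\beta Qf$ hold identically. But an operator $S_{\alpha,\beta}$ does \emph{not} satisfy $P(S_{\alpha,\beta}f)=\alpha Pf$ and $Q(S_{\alpha,\beta}f)=\beta Qf$ for general $\alpha,\beta\in L^\infty$: the function $\alpha Pf$ has in general a nonzero $H^{2\bot}$ component and $\beta Qf$ a nonzero $H^2$ component, so $S_{\alpha,\beta}$ preserves neither $H^2$ nor $H^{2\bot}$ unless $\alpha$ is analytic and $\beta$ co-analytic --- which is one horn of the very dichotomy you are trying to prove. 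Imposing ``no leakage'' would wrongly exclude the case $\alpha_1=\beta_1$ with nonanalytic symbols. Relatedly, the functional equation you extract, $\beta_1 Q(\alpha_2 g)=0$ for all $g\in H^2$, is not what the hypothesis gives. Applying both sides of $S_{\alpha_1,\beta_1}S_{\alpha_2,\beta_2}=S_{\alpha,\beta}$ to $f\in H^2$ and writing $P=I-Q$ yields
$$
(\alpha-\alpha_1\alpha_2)f=(\beta_1-\alpha_1)Q(\alpha_2 f),
$$
with the unknown symbol $\alpha$ still present on the left and the factor $\beta_1-\alpha_1$ (not $\beta_1$) on the right; with your version one would reach false conclusions already for $\alpha_1=\beta_1\neq 0$ and $\alpha_2$ nonanalytic.

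The missing idea is precisely the identification $\alpha=\alpha_1\alpha_2$ and $\beta=\beta_1\beta_2$, which you only ``expect'' as natural candidates but never prove, and without which the displayed equation does not collapse. The paper proves it by a limiting argument: put $f=z^n$, $n\geq 0$. Since multiplication by $z^n$ is an $L^2$ isometry, the left side has constant norm $||\alpha-\alpha_1\alpha_2||$, while $||Q(\alpha_2 z^n)||^2=\sum_{k\leq -n-1}|\hat\alpha_2(k)|^2\rightarrow 0$ as $n\rightarrow\infty$, forcing $\alpha=\alpha_1\alpha_2$; symmetrically, testing on $z^n$ with $n\leq -1$ gives $\beta=\beta_1\beta_2$. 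Only then do the cases $f=1$ and $f=\bar z$ yield the clean system $(\beta_1-\alpha_1)Q(\alpha_2)=0$ and $(\alpha_1-\beta_1)P(\bar z\beta_2)=0$, at which point your instinct to invoke the F.\ and M.\ Riesz theorem is exactly right: if $\alpha_1\neq\beta_1$ on a set of positive measure, the co-analytic function $Q(\alpha_2)$ and the analytic function $P(\bar z\beta_2)$ vanish on that set, hence identically, so $\alpha_2$ is analytic and $\beta_2$ is co-analytic. So the final splitting step of your sketch is sound, but as written the system feeding into it is wrong, and the proof does not close without the norm-limit identification of $\alpha$ and $\beta$.
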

\begin{proof}
For $f\in L^2$,
$$
S_{\alpha_1,\beta_1}S_{\alpha_2,\beta_2}f=\alpha_1P(\alpha_2Pf+\beta_2Qf)+\beta_1Q(\alpha_2Pf+\beta_2Qf).
$$
If $\alpha_1=\beta_1$, then right hand side becomes $\alpha_1\alpha_2Pf+\beta_1\beta_2Qf$ which is nothing but $S_{\alpha_1\alpha_2,\beta_1\beta_2}f$. If $\alpha_2$ is analytic and $\beta_2$ is co-analytic, then too, the right hand side equals to $\alpha_1\alpha_2Pf+\beta_1\beta_2Qf=S_{\alpha_1\alpha_2,\beta_1\beta_2}$. This proves the if part of the Theorem. For the converse part, let
$S_{\alpha_1,\beta_1}S_{\alpha_2,\beta_2}=S_{\alpha,\beta}$ for some $\alpha,\beta\in L^\infty$. Applying on  $z^n$ to both sides, we get
\begin{eqnarray}\label{6.1}
\alpha_1P(\alpha_2 z^n)+\beta_1 Q(\alpha_2 z^n)=\alpha z^n, n\geq 0,
\end{eqnarray}
 \begin{eqnarray}\label{6.2}
\alpha_1P(\beta_2 z^n)+\beta_1 Q(\beta_2 z^n)=\beta z^n, n\leq -1,
\end{eqnarray}
Writing $P(\alpha_2 z^n)=\alpha_2 z^n-Q(\alpha_2 z^n)$ in \ref{6.1}, it follows that
$$
(\alpha-\alpha_1\alpha_2)z^n=(\beta_1-\alpha_1)Q(\alpha_2 z^n), n\geq 0.
$$
Taking $L_2$ norm of both side we get
$$
||\alpha-\alpha_1\alpha_2||=||(\beta_1-\alpha_1)Q(\alpha_2 z^n)||\leq ||\beta_1-\alpha_1||_{\infty}||Q(\alpha_2 z^n)||, n\geq 0.
$$
Since $||Q(\alpha_2 z^n)||\rightarrow 0$ as $n\rightarrow\infty$, we conclude that $\alpha=\alpha_1\alpha_2$. In a similar way, using \ref{6.2}, we can show that $\beta=\beta_1\beta_2$. Therefore \ref{6.1} with $n=0$ and \ref{6.2} with $n=-1$ respectively give $\alpha_1P(\alpha_2)+\beta_1Q(\alpha_2)=\alpha_1\alpha_2$ and $\alpha_1P(\beta_2\bar z)+\beta_1Q(\beta_2\bar z)=\beta_1\beta_2\bar z$. Writing $P(\alpha_2)=\alpha_2-Q(\alpha_2)$ and $Q(\beta_2\bar z)=\beta_2\bar z-P(\beta_2\bar z)$, it follows that $(\beta_1-\alpha_1)Q(\alpha_2)=0$ and $(\alpha_1-\beta_1)P(\beta_2\bar z)=0$. Since a non zero analytic or co-analytic function can not vanishes on a set of positive measure, either $\alpha_1=\beta_1$ or $\alpha_2$ analytic and $\beta_2$ co-analytic.
\end{proof}
\begin{cor}
Let $\alpha_1,\beta_1,\alpha_2,\beta_2\in L^{\infty}$. Then $S_{\alpha_1,\beta_1}S_{\alpha_2,\beta_2}=0$ iff at least one of the following holds.

\noindent (i) $\alpha_1=\beta_1$, $\alpha_1\alpha_2=\beta_1\beta_2=0$, 

\noindent (ii) $\alpha_1=\beta_2=0$, $\alpha_2$ is analytic. 

\noindent (iii) $\alpha_2=\beta_1=0$,  $\beta_2$ is co-analytic.

\noindent (iv) $\alpha_2=\beta_2=0$.

\end{cor}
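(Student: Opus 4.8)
The plan is to read everything off from the preceding Theorem \ref{thm-multiplicati is of the same form}, using the single observation that the zero operator equals $S_{0,0}$ and is therefore of the form $S_{\alpha,\beta}$.

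For the \emph{if} direction I would treat the four cases uniformly. In case (i) the hypothesis $\alpha_1=\beta_1$ is exactly one of the two alternatives in Theorem \ref{thm-multiplicati is of the same form}; in cases (ii), (iii) and (iv) the other alternative holds, since $\alpha_2$ is analytic and $\beta_2$ is co-analytic (remembering that the zero function is simultaneously analytic and co-analytic, so that $\alpha_2=0$ counts as analytic and $\beta_2=0$ as co-analytic). In every case Theorem \ref{thm-multiplicati is of the same form} therefore gives $S_{\alpha_1,\beta_1}S_{\alpha_2,\beta_2}=S_{\alpha_1\alpha_2,\beta_1\beta_2}$, and a one-line inspection of the stated conditions shows $\alpha_1\alpha_2=\beta_1\beta_2=0$ in each: directly in (i), from $\alpha_1=\beta_2=0$ in (ii), from $\beta_1=\alpha_2=0$ in (iii), and from $\alpha_2=\beta_2=0$ in (iv). Hence the product is $S_{0,0}=0$.

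For the \emph{only if} direction, suppose $S_{\alpha_1,\beta_1}S_{\alpha_2,\beta_2}=0=S_{0,0}$. Since the right-hand side is of the form $S_{\alpha,\beta}$, Theorem \ref{thm-multiplicati is of the same form} applies and yields two facts: first, either $\alpha_1=\beta_1$, or else $\alpha_2$ is analytic and $\beta_2$ is co-analytic; and second, $\alpha_1\alpha_2=0$ together with $\beta_1\beta_2=0$. If $\alpha_1=\beta_1$ we are immediately in case (i). Otherwise $\alpha_2$ is analytic and $\beta_2$ is co-analytic, and here the main point enters: I would invoke the F. and M. Riesz theorem in the form already used in the proof of Theorem \ref{thm-multiplicati is of the same form}, namely that a non-zero analytic or co-analytic function cannot vanish on a set of positive measure. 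Applied to $\alpha_1\alpha_2=0$ with $\alpha_2$ analytic, this forces $\alpha_2=0$ or $\alpha_1=0$; applied to $\beta_1\beta_2=0$ with $\beta_2$ co-analytic, it forces $\beta_2=0$ or $\beta_1=0$.

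The remaining work is purely combinatorial: enumerate the four resulting possibilities and match each to the list. The pair $\alpha_2=0,\ \beta_2=0$ gives (iv); the pair $\alpha_2=0,\ \beta_1=0$ gives (iii) (with $\beta_2$ co-analytic, since we are in that branch); the pair $\alpha_1=0,\ \beta_2=0$ gives (ii) (with $\alpha_2$ analytic); and the pair $\alpha_1=0,\ \beta_1=0$ means $\alpha_1=\beta_1=0$, which collapses back into case (i), since then $\alpha_1\alpha_2=\beta_1\beta_2=0$ automatically. The only genuinely delicate step is the Riesz argument; beyond that, the one thing to be careful about is precisely this absorption of the overlapping sub-case $\alpha_1=\beta_1=0$ into (i), so that the four listed conditions exhaust all possibilities without my having to claim that they are mutually exclusive.
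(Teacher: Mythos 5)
Your proposal is correct and follows essentially the same route as the paper: invoke Theorem \ref{thm-multiplicati is of the same form} to get the dichotomy ($\alpha_1=\beta_1$, or $\alpha_2$ analytic and $\beta_2$ co-analytic) together with $\alpha_1\alpha_2=\beta_1\beta_2=0$, then use the F.~and M.~Riesz fact that a nonzero analytic or co-analytic function cannot vanish on a set of positive measure to split each product, and finish by the same case enumeration (the paper disposes of your fourth sub-case by noting $\alpha_1=\beta_1=0$ contradicts $\alpha_1\neq\beta_1$, which matches your absorption into case (i)). The only cosmetic difference is that the paper checks the ``if'' direction directly from the formula for $S_{\alpha_1,\beta_1}S_{\alpha_2,\beta_2}f$ rather than citing the theorem as you do, but that is the same computation.
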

\begin{proof}
If part is easy to see from the formula of $S_{\alpha_1,\beta_1}S_{\alpha_2,\beta_2}f$ :
$$
S_{\alpha_1,\beta_1}S_{\alpha_2,\beta_2}f=\alpha_1P(\alpha_2Pf+\beta_2Qf)
+\beta_1Q(\alpha_2Pf+\beta_2Qf).
$$
Conversely, let $
S_{\alpha_1,\beta_1}S_{\alpha_2,\beta_2}=0=S_{0,0}$. 

\noindent Case-1 : $\alpha_1=\beta_1$. Then $S_{\alpha_1\alpha_2,\beta_1\beta_2}=S_{\alpha_1,\beta_1}S_{\alpha_2,\beta_2}=0$ and hence $\alpha_1\alpha_2=\beta_1\beta_2=0$.

\noindent Case-2 : $\alpha_1\neq\beta_1$. Then by the previous theorem, $\alpha_2$ is analytic and $\beta_2$ is co-analytic. Also, $\alpha_1\alpha_2=0$ and $\beta_1\beta_2=0$. Since a non zero analytic (or co-analytic) function can not be zero on a set of positive measure, it follows that at least one of $\alpha_1$ and $\alpha_2$ is zero and at least one of $\beta_1$ and $\beta_2$ is zero. Since we are dealing with the case $\alpha_1\neq\beta_1$, at least one of $(ii)$, $(iii)$ and $(iv)$ must hold.
\end{proof}
\begin{thm}
Let $\alpha_1,\beta_1,\alpha_2,\beta_2\in L^\infty$. $S_{\alpha_1,\beta_1}$ commutes with $S_{\alpha_2,\beta_2}$ iff at least one of the following holds:

\noindent \textup{(i)} $\alpha_1,\alpha_2$ analytic and $\beta_1,\beta_2$ co-analytic.

\noindent \textup{(ii)} $\alpha_1=\beta_1$ and $\alpha_2=\beta_2$.

\noindent \textup{(iii)} There are constants $a,b,c$ with at least one of $a$ and $b$ is non-zero such that $a\alpha_1+b\alpha_2=a\beta_1+b\beta_2=c$.
\end{thm}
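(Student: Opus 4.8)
The plan is to convert commutativity into two scalar functional equations and then run a case analysis off them. Set $T_i:=S_{\alpha_i,\beta_i}$, $\gamma:=\beta_1-\alpha_1$ and $\delta:=\beta_2-\alpha_2$. Evaluating $T_1T_2$ and $T_2T_1$ separately on $f\in H^2$ and on $g\in H^{2\perp}$ and substituting $P=I-Q$, the leading products $\alpha_1\alpha_2 f$ and $\beta_1\beta_2 g$ cancel, leaving
\begin{align*}
(T_1T_2-T_2T_1)f &= \gamma\,Q(\alpha_2 f)-\delta\,Q(\alpha_1 f),\qquad f\in H^2,\\
(T_1T_2-T_2T_1)g &= \delta\,P(\beta_1 g)-\gamma\,P(\beta_2 g),\qquad g\in H^{2\perp}.
\end{align*}
Since $L^2=H^2\oplus H^{2\perp}$, I would conclude that $T_1$ and $T_2$ commute iff $\gamma\,Q(\alpha_2 f)=\delta\,Q(\alpha_1 f)$ for all $f\in H^2$ (call this (A)) and $\gamma\,P(\beta_2 g)=\delta\,P(\beta_1 g)$ for all $g\in H^{2\perp}$ (call this (B)).

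For the \emph{if} direction I would dispatch the three cases directly. Case (ii) is immediate: $\alpha_i=\beta_i$ makes $T_i$ the multiplication operator by $\alpha_i$, and multiplication operators commute. Case (i) follows from Theorem \ref{thm-multiplicati is of the same form}: analyticity of $\alpha_2$ and co-analyticity of $\beta_2$ give $T_1T_2=S_{\alpha_1\alpha_2,\beta_1\beta_2}$, while the same hypotheses on $\alpha_1,\beta_1$ give $T_2T_1=S_{\alpha_2\alpha_1,\beta_2\beta_1}$, and these agree by commutativity of pointwise multiplication. Case (iii) is cleanest: using linearity of $(\alpha,\beta)\mapsto S_{\alpha,\beta}$ and $S_{c,c}=cI$, the relation $a\alpha_1+b\alpha_2=a\beta_1+b\beta_2=c$ says exactly that $aT_1+bT_2=cI$; hence $T_1$ and $T_2$ commute (if $b\neq 0$ then $T_2=b^{-1}(cI-aT_1)$, and if $b=0$ then $T_1$ is scalar).

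The substance is the \emph{only if} direction, where I first extract clean scalar identities from (A) and (B). Applying (A) to $f$ and to $zf$ and subtracting, using the elementary identity $Q(zh)=zQ(h)-\hat h(-1)$, the $Q$-terms cancel by (A) and what survives is $\gamma\langle\alpha_2 f,\bar z\rangle=\delta\langle\alpha_1 f,\bar z\rangle$ for all $f\in H^2$; taking $f=z^{j-1}$ turns this into
$$
\gamma\,\hat\alpha_2(-j)=\delta\,\hat\alpha_1(-j),\qquad j\geq 1.
$$
The mirror argument on (B), shifting by $\bar z$, yields $\gamma\,\hat\beta_2(j)=\delta\,\hat\beta_1(j)$ for $j\geq 1$. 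These two families — equations between the fixed functions $\gamma,\delta$ with \emph{scalar} coefficients — are the engine. If $\gamma\equiv\delta\equiv 0$ we are in case (ii). If exactly one vanishes, say $\gamma\equiv 0\neq\delta$, the identities force $\hat\alpha_1(-j)=\hat\beta_1(j)=0$ for all $j\geq 1$, so $\alpha_1(=\beta_1)$ is both analytic and co-analytic, hence constant, which is case (iii) with $b=0$. If both $\gamma,\delta\not\equiv 0$, I split on whether $\alpha_1$ is analytic and $\beta_1$ co-analytic: if so, dividing out the nonzero $\gamma$ forces $\alpha_2$ analytic and $\beta_2$ co-analytic, giving case (i); otherwise some $\hat\alpha_1(-j_0)\neq 0$ or $\hat\beta_1(k_0)\neq 0$, and the relevant identity gives $\delta=\lambda\gamma$ for a scalar $\lambda$. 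Feeding $\delta=\lambda\gamma$ back in makes $\alpha_2-\lambda\alpha_1$ analytic and $\beta_2-\lambda\beta_1$ co-analytic, while $\delta=\lambda\gamma$ rearranges to $\alpha_2-\lambda\alpha_1=\beta_2-\lambda\beta_1$; a function both analytic and co-analytic is constant, so this common value is some $c$, which is case (iii) with $a=-\lambda$, $b=1$.

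The main obstacle is precisely the passage from the operator equations (A),(B) to the scalar identities $\gamma\,\hat\alpha_2(-j)=\delta\,\hat\alpha_1(-j)$: one cannot read off coefficients naively because $\gamma,\delta$ are functions, so the shift-and-cancel manipulation is essential. After that, the only conceptual ingredient is the repeated use of the F. and M. Riesz principle in its disguised form ``analytic $\cap$ co-analytic $=$ constants,'' which is exactly what manufactures the constant $c$ in case (iii).
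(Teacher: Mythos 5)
Your proposal is correct, and while your opening reduction coincides with the paper's Lemma \ref{lemma-1 to prove commutativity} --- testing the commutator separately on $H^2$ and $H^{2\bot}$ after substituting $P=I-Q$ gives exactly equations (\ref{equation-1 in lemma-2})--(\ref{equation-2 in lemma-2}), up to a sign on both sides --- from there you take a genuinely different route. The paper proves two standalone characterization lemmas (Lemmas \ref{lemma-2 to prove commutativity} and \ref{lemma-3 prove of commutativity}): it cross-multiplies two instances of the functional equation to get $Q(\phi_2 f)Q(\phi_1 g)=Q(\phi_1 f)Q(\phi_2 g)$, compares $z^{-2}$ coefficients with $f=z^{n_0-1}$, $g=z^n$, must treat the case $\psi_1\psi_2=0$ separately, and repeatedly invokes the F. and M. Riesz theorem to divide by nonzero analytic or co-analytic \emph{functions}; it then assembles the theorem through a four-case combination of the two lemmas. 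Your shift-and-cancel step ($f\mapsto zf$ via $Q(zh)=zQ(h)-\hat h(-1)$, and $g\mapsto \bar z g$ via $P(\bar z h)=\bar z P(h)-\hat h(0)\bar z$) instead linearizes everything to the scalar-coefficient identities $\gamma\,\hat\alpha_2(-j)=\delta\,\hat\alpha_1(-j)$ and $\gamma\,\hat\beta_2(j)=\delta\,\hat\beta_1(j)$, $j\geq 1$, after which every division is of a \emph{scalar} against the fixed nonzero functions $\gamma,\delta$; consequently F. and M. Riesz is never needed (your closing remark actually overstates your own requirements: ``analytic $\cap$ co-analytic $=$ constants'' is elementary Fourier bookkeeping, not the Riesz theorem), and the $\psi_1\psi_2=0$ subtlety never arises. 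Your sufficiency argument for (iii) is also slicker than the paper's verification of the functional equations: $aS_{\alpha_1,\beta_1}+bS_{\alpha_2,\beta_2}=S_{c,c}=cI$ makes one operator an affine function of the other. What the paper's route buys is the two reusable iff-lemmas characterizing $\psi_1Q(\phi_2 f)=\psi_2Q(\phi_1 f)$ in full generality; what yours buys is a shorter, more elementary proof of the theorem itself. One sentence worth adding: if both some $\hat\alpha_1(-j_0)\neq 0$ and some $\hat\beta_1(k_0)\neq 0$, the two candidate scalars $\lambda$ agree automatically, since $\delta=\lambda\gamma$ with $\gamma\not\equiv 0$ determines $\lambda$ uniquely.
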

To prove the theorem, we need several lemmas.

\begin{lem}\label{lemma-1 to prove commutativity}
Let $\alpha_1,\beta_1,\alpha_2,\beta_2\in L^\infty$. $S_{\alpha_1,\beta_1}$ commutes with $S_{\alpha_2,\beta_2}$ iff following two holds:
\begin{eqnarray}\label{equation-1 in lemma-2}
(\alpha_1-\beta_1)Q(\alpha_2 f)=(\alpha_2-\beta_2)Q(\alpha_1 f)\hspace{3mm}\textup{for all}\hspace{1mm} f\in H^2,
\end{eqnarray}
\begin{eqnarray}\label{equation-2 in lemma-2}
(\alpha_1-\beta_1)P(\beta_2 g)=(\alpha_2-\beta_2)P(\beta_1 g)\hspace{3mm}\textup{for all}\hspace{1mm} g\in H^{2\bot}.
\end{eqnarray}
\end{lem}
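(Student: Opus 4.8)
<br>

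The plan is to prove the lemma by directly computing both composites $S_{\alpha_1,\beta_1}S_{\alpha_2,\beta_2}f$ and $S_{\alpha_2,\beta_2}S_{\alpha_1,\beta_1}f$ for an arbitrary $f \in L^2$, subtracting, and then extracting the content of the resulting identity by testing it on the two orthogonal pieces $Pf \in H^2$ and $Qf \in H^{2\bot}$ separately. Writing $S_{\alpha_2,\beta_2}f = \alpha_2 Pf + \beta_2 Qf$ and applying $S_{\alpha_1,\beta_1}$ gives
$$
S_{\alpha_1,\beta_1}S_{\alpha_2,\beta_2}f = \alpha_1 P(\alpha_2 Pf + \beta_2 Qf) + \beta_1 Q(\alpha_2 Pf + \beta_2 Qf),
$$
and symmetrically for the other order. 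The commutator $(S_{\alpha_1,\beta_1}S_{\alpha_2,\beta_2} - S_{\alpha_2,\beta_2}S_{\alpha_1,\beta_1})f$ is then a combination of terms of the form $\alpha_i P(\cdot)$ and $\beta_i Q(\cdot)$.

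First I would split the test function, using linearity, and treat $f \in H^2$ (so $Qf = 0$, $Pf = f$) and $f \in H^{2\bot}$ (so $Pf = 0$, $Qf = f$) as the two cases that together are equivalent to the identity holding for all $f \in L^2$. For $f \in H^2$ the commutator reduces to a combination of $\alpha_i P(\alpha_j f)$ and $\beta_i Q(\alpha_j f)$; the key simplification is to replace each projection using $P = I - Q$ (or $Q = I - P$), so that, for instance, $\alpha_1 P(\alpha_2 f) = \alpha_1\alpha_2 f - \alpha_1 Q(\alpha_2 f)$. The terms that carry the bare product $\alpha_1\alpha_2 f$ appear symmetrically in both orders of composition and therefore cancel in the commutator, leaving precisely a combination of $Q(\alpha_2 f)$ and $Q(\alpha_1 f)$ weighted by $\alpha_1 - \beta_1$ and $\alpha_2 - \beta_2$. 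This is how equation (\ref{equation-1 in lemma-2}) should emerge. An entirely parallel computation for $f \in H^{2\bot}$, this time eliminating $Q$ in favor of $I - P$, should yield equation (\ref{equation-2 in lemma-2}), with the roles of $P$ and $Q$ and of $\alpha_i$ and $\beta_i$ interchanged.

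The two directions are then immediate from this reduction: the commutator vanishes on all of $L^2$ if and only if it vanishes on $H^2$ and on $H^{2\bot}$, and these two vanishing conditions are exactly (\ref{equation-1 in lemma-2}) and (\ref{equation-2 in lemma-2}) respectively. I expect the only real bookkeeping obstacle to be the sign and projection algebra: one must be careful that after substituting $P = I - Q$ the spurious full-product terms genuinely cancel between the two composition orders, so that nothing survives except the asserted difference of $Q$-projections (respectively $P$-projections). Since all functions involved lie in $L^2$ and the projections are bounded, no convergence or density argument is needed here; the whole lemma is a finite algebraic manipulation valid pointwise for each fixed $f$, and the main care is simply in not dropping or mis-signing a term when collecting the coefficients of $Q(\alpha_1 f)$ and $Q(\alpha_2 f)$.
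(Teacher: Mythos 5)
Your proposal is correct and follows essentially the same route as the paper: restrict the commutator identity to $f\in H^2$ and $g\in H^{2\bot}$ (where $S_{\alpha_2,\beta_2}f=\alpha_2 f$ and $S_{\alpha_2,\beta_2}g=\beta_2 g$), then substitute $P=I-Q$ (respectively $Q=I-P$) so the full-product terms $\alpha_1\alpha_2 f$ (respectively $\beta_1\beta_2 g$) cancel, leaving exactly equations (\ref{equation-1 in lemma-2}) and (\ref{equation-2 in lemma-2}). The cancellation you flag as the only bookkeeping concern does go through exactly as you predict, and the equivalence is indeed a purely algebraic, pointwise manipulation with no convergence issues.
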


\begin{proof}
$S_{\alpha_1,\beta_1}$ commutes with $S_{\alpha_2,\beta_2}$ iff $S_{\alpha_1,\beta_1}S_{\alpha_2,\beta_2}f=
S_{\alpha_2,\beta_2}S_{\alpha_1,\beta_1f}$ for all $f$ in $H^2$ and $S_{\alpha_1,\beta_1}S_{\alpha_2,\beta_2}g=
S_{\alpha_2,\beta_2}S_{\alpha_1,\beta_1}g$ for all $g$ in $H^{2\bot}$. Now, for $f\in H^2$,
\begin{eqnarray*}
S_{\alpha_1,\beta_1}S_{\alpha_2,\beta_2}f=
S_{\alpha_2,\beta_2}S_{\alpha_1,\beta_1f}&\Longleftrightarrow & S_{\alpha_1,\beta_1}(\alpha_2 f)=S_{\alpha_2,\beta_2}(\alpha_1 f)\\&\Longleftrightarrow & \alpha_1P(\alpha_2f)+\beta_1Q(\alpha_2 f)=\alpha_2P(\alpha_1f)+\beta_2Q(\alpha_1f)
\end{eqnarray*}
which is equivalent to (\ref{equation-1 in lemma-2}) since $\alpha_1P(\alpha_2f)=
\alpha_1\alpha_2f-\alpha_1Q(\alpha_2f)$ and $\alpha_2P(\alpha_1f)=
\alpha_1\alpha_2f-\alpha_2Q(\alpha_1f)$. In a similar way we can show that, for $g$ in $H^{2\bot}$, $S_{\alpha_1,\beta_1}S_{\alpha_2,\beta_2}g=
S_{\alpha_2,\beta_2}S_{\alpha_1,\beta_1}g$ is equivalent to (\ref{equation-2 in lemma-2}). Hence the proof.
\end{proof}

\begin{lem}\label{lemma-2 to prove commutativity}
Let $\phi_1,\psi_1,\phi_2,\psi_2\in L^\infty$. Then $\psi_1Q(\phi_2f)=\psi_2Q(\phi_1f)$ for all $f$ in $H^2$ iff at least one of the following holds :

\noindent\textup{(a)} Both $\phi_1$ and $\phi_2$ are analytic.

\noindent\textup{(b)} $\psi_1=\psi_2=0$.

\noindent \textup{(c)} There are constants $a$ and $b$ with at least one of them is non-zero such that  $a\psi_1+b\psi_2=0$ and $aQ(\phi_1)+bQ(\phi_2)=0$.
\end{lem}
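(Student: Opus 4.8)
The plan is to analyze the identity $\psi_1 Q(\phi_2 f) = \psi_2 Q(\phi_1 f)$ for all $f \in H^2$ and extract structural information about the four functions. The ``if'' direction is immediate: in case (a), $\phi_1, \phi_2 \in H^\infty$ force $Q(\phi_1 f) = Q(\phi_2 f) = 0$ for $f \in H^2$; in case (b) both sides vanish trivially; and in case (c) one checks directly that $a\psi_1 + b\psi_2 = 0$ and $aQ(\phi_1) + bQ(\phi_2) = 0$ propagate to $aQ(\phi_1 f) + bQ(\phi_2 f) = 0$, which when combined with the relation among the $\psi_i$ gives the desired equality. So the real content is the ``only if'' direction.

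**Reducing to generators.**
First I would specialize $f = z^n$ for $n \geq 0$ and record the family of relations $\psi_1 Q(\phi_2 z^n) = \psi_2 Q(\phi_1 z^n)$. The key observation is that $Q(\phi_2 z^n)$ and $Q(\phi_1 z^n)$ are the ``negative tails'' of the Fourier expansions shifted by $z^n$; as $n$ grows these tails shrink, so the $n=0$ case $\psi_1 Q(\phi_2) = \psi_2 Q(\phi_1)$ carries the primary information and the higher $n$ impose compatibility. I expect to split on whether $Q(\phi_1)$ and $Q(\phi_2)$ are both zero. If $Q(\phi_1) = Q(\phi_2) = 0$, then both $\phi_1, \phi_2$ are analytic and we are in case (a). So the interesting branch is when at least one of $Q(\phi_1), Q(\phi_2)$ is nonzero, say $Q(\phi_1) \neq 0$ (the other case being symmetric).

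**The main argument.**
In the branch $Q(\phi_1) \neq 0$, the relation $\psi_1 Q(\phi_2) = \psi_2 Q(\phi_1)$ at $n=0$ already ties $\psi_1$ and $\psi_2$ together through the coanalytic functions $Q(\phi_1), Q(\phi_2)$. The idea is to produce the constants $a, b$ of case (c). I would argue that the functions $Q(\phi_2)$ and $Q(\phi_1)$ must be linearly dependent: if they were independent, the pointwise identity $\psi_1 Q(\phi_2) = \psi_2 Q(\phi_1)$ together with the shifted identities for $n \geq 1$ would overdetermine the $\psi_i$. More concretely, since $Q(\phi_i z^n)$ are coanalytic $L^2$ functions, I can invoke the F.\ and M.\ Riesz theorem (stated in the excerpt): a nonzero coanalytic function cannot vanish on a set of positive measure, so ratios like $Q(\phi_2)/Q(\phi_1)$ are well-defined a.e.\ off a null set. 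Setting $h := Q(\phi_2)/Q(\phi_1)$, the $n=0$ relation gives $\psi_1 h = \psi_2$ wherever $Q(\phi_1) \neq 0$; feeding this back into the $n \geq 1$ relations forces $h$ to be constant. Calling that constant $-b/a$ yields $a\psi_1 + b\psi_2 = 0$ and simultaneously $aQ(\phi_1) + bQ(\phi_2) = 0$, which is exactly case (c). If instead $\psi_1 = \psi_2 = 0$ emerges along the way, we land in case (b).

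**The main obstacle.**
The delicate step is showing that the ratio $h = Q(\phi_2)/Q(\phi_1)$ is forced to be an honest constant rather than merely a measurable function, and handling the possibility that $Q(\phi_1)$ vanishes on part of the circle while $Q(\phi_2)$ does not. This is where the F.\ and M.\ Riesz theorem does the heavy lifting: it guarantees that coanalytic functions have zero sets of measure zero, so the quotient is defined almost everywhere, and it lets me rule out ``patching'' of the identity across different regions. Converting the shift-invariant family of relations into the single algebraic constraint ``$h$ is constant'' is the crux, and I would carry it out by comparing Fourier coefficients of $\psi_1 Q(\phi_2 z^n)$ and $\psi_2 Q(\phi_1 z^n)$ to pin down the dependence.
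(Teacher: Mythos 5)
Your ``if'' direction and your reduction to $f=z^n$ are fine and agree with the paper, and you correctly locate the crux: showing that the a.e.-defined ratio $h=Q(\phi_2)/Q(\phi_1)$ is a genuine constant. But the mechanism you propose for that step does not work, and this is a real gap. From $\psi_2=\psi_1 h$ you obtain $\psi_1\left(Q(\phi_2z^n)-h\,Q(\phi_1z^n)\right)=0$, which carries information only on the set $\{\psi_1\neq 0\}$; since $h$ is merely measurable, the function $Q(\phi_2z^n)-h\,Q(\phi_1z^n)$ is \emph{not} coanalytic, so the F.\ and M.\ Riesz theorem cannot be applied to it, and ``equal on a set of positive measure'' does not upgrade to ``equal everywhere.'' Likewise, comparing Fourier coefficients of $\psi_1Q(\phi_2z^n)$ against $\psi_2Q(\phi_1z^n)$ leads to convolution equations in which the unknown coefficients of the arbitrary $L^\infty$ functions $\psi_1,\psi_2$ are entangled with those of the $\phi_i$; nothing in your outline extracts the constancy of $h$ from this system.

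The missing idea, which is how the paper closes exactly this gap, is to \emph{eliminate the $\psi$'s entirely} by bilinearizing: take two test functions $f,g\in H^2$, cross-multiply the identities $\psi_1Q(\phi_2f)=\psi_2Q(\phi_1f)$ and $\psi_1Q(\phi_2g)=\psi_2Q(\phi_1g)$ to get $\psi_1\psi_2\left(Q(\phi_2f)Q(\phi_1g)-Q(\phi_1f)Q(\phi_2g)\right)=0$. The bracketed expression is a difference of products of coanalytic $L^2$ functions, hence a coanalytic $L^1$ function, so when $\psi_1\psi_2\not\equiv 0$ the F.\ and M.\ Riesz theorem \emph{does} apply and yields $Q(\phi_2f)Q(\phi_1g)=Q(\phi_1f)Q(\phi_2g)$ identically, for all $f,g\in H^2$. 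Now a clean coefficient comparison is available precisely because both sides are products of two coanalytic functions: choosing $f=z^{n_0-1}$ with $\hat\phi_2(-n_0)\neq 0$ and $g=z^n$, the $z^{-2}$ coefficient of each side is the product of the two $z^{-1}$ coefficients, giving $\hat\phi_2(-n_0)\hat\phi_1(-n-1)=\hat\phi_1(-n_0)\hat\phi_2(-n-1)$ for all $n\geq 0$, i.e.\ $Q(\phi_1)=cQ(\phi_2)$ with $c=\hat\phi_1(-n_0)/\hat\phi_2(-n_0)$; then $f=1$ returns $\psi_1=c\psi_2$, which is case (c). (The complementary case $\psi_1\psi_2=0$ is handled separately in the paper by multiplying the $f=1$ identity by $\psi_1$ and by $\psi_2$ and again invoking that a nonzero coanalytic function cannot vanish on a set of positive measure.) Your ratio argument can in fact be repaired along these lines --- multiply the relations on $\{\psi_1\neq 0\}$ for two indices $n,m$ to cancel $h$ --- but that repair \emph{is} the bilinearization step, and without it your proposal does not go through.
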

\begin{proof}
In this proof, without further mentioning, we use the fact that a non zero analytic function can not vanish on a set of positive measure. It is obvious that any one of conditions (a) and (b) is sufficient, where as sufficiency of condition (c) follows from the fact that $Q(\phi_1)=cQ(\phi_2)$ implies $Q(\phi_1 f)=cQ(\phi_2 f)$ for all $f$ in $H^2$. For the necessary part, it is enough to prove that, if atleast one of $\phi_1$ and $\phi_2$ is not analytic then (b) or (c) holds. Here we prove this assuming that $\phi_2$ is not analytic. The proof when $\phi_1$ is not analytic will be similar. Two cases may happen : $\psi_1\psi_2=0$ and $\psi_1\psi_2\neq 0$. Assume that $\psi_1\psi_2=0$. The given equation with $f=1$ gives $\psi_1Q(\phi_2)=\psi_2Q(\phi_1)$. Multipling both sides with $\psi_1$ we get $\psi^2_1Q(\phi_2)=\psi_1\psi_2 Q(\phi_1)$ which implies $\psi^2_1=0$ or $\psi_1=0$. Similarly, multiplying both sides by $\psi_2$, we get $\psi_2=0$. So the first case implies condition (b). Now we consider the second case i.e. $\phi_2$ is not analytic and $\psi_1\psi_2\neq 0$. For any $f,g$ in $H^2$, we have $\psi_1Q(\phi_2f)=\psi_2Q(\phi_1f)$ and $\psi_1Q(\phi_2g)=\psi_2Q(\phi_1g)$. Cross multipliying these two equation we get
$$
\psi_1\psi_2Q(\phi_2f)Q(\phi_1g)=\psi_1
\psi_2Q(\phi_1f)Q(\phi_2g)
$$
or
\begin{eqnarray}\label{6.4.1}
Q(\phi_2f)Q(\phi_1g)=Q(\phi_1f)Q(\phi_2g) \hspace{3mm}\textup{for all}\hspace{1mm}f,g\in H^2.
\end{eqnarray}
Since $\phi_2$ is not analytic there exist integer $n_0\geq 1$ such that $\hat\phi_2(-n_0)\neq 0$. In \ref{6.4.1}, putting $f=z^{n_0-1}$ and $g=z^n$ ($n\geq 0$), we get $Q(\phi_2z^{n_0-1})Q(\phi_1 z^n)=Q(\phi_1z^{n_0-1})Q(\phi_2 z^n)$. Now we compare the coefficients of $z^{-2}$ from both sides.  Coefficient of $z^{-2}$ in the series of $Q(\phi_2z^{n_0-1})Q(\phi_1 z^n)$ is equal to the multiplication of the coefficient of $z^{-1}$ in the series of $Q(\phi_2z^{n_0-1})$ and the same in the series of $Q(\phi_1 z^n)$ which is nothing but $\hat\phi_2(-n_0)\hat\phi_1(-n-1)$. Similarly, the coefficient of $z^{-2}$ in the series of $Q(\phi_1z^{n_0-1})Q(\phi_2 z^n)$ is $\hat\phi_1(-n_0)\hat\phi_2(-n-1)$. So, we get $\hat\phi_2(-n_0)\hat\phi_1(-n-1)=\hat\phi_1(-n_0)\hat\phi_2(-n-1)$ or $\hat{\phi}_1(-n-1)=\frac{\hat\phi_1(-n_0)}{\hat\phi_2(-n_0)}\hat\phi_2(-n-1)$ for all $n\geq 0$. But this is equivalent to saying that $Q(\phi_1)=cQ(\phi_2)$, where $c=\frac{\hat\phi_1(-n_0)}{\hat\phi_2(-n_0)}$. Therefore the given equation with $f=1$ gives $\psi_1=c\psi_2$ and hence condition (c) follows.
\end{proof}
In a similar way we can prove the following lemma.
\begin{lem}\label{lemma-3 prove of commutativity}
Let $\phi_1,\psi_1,\phi_2,\psi_2\in L^\infty$. Then $\psi_1P(\phi_2g)=\psi_2P(\phi_1g)$ for all $g$ in $H^{2\bot}$ iff at least one of the following holds :

\noindent\textup{(a)} Both $\phi_1$ and $\phi_2$ are co-analytic.

\noindent\textup{(b)} $\psi_1=\psi_2=0$.

\noindent \textup{(c)} There are constants $a$ and $b$ with at least one of them non zero such that  $a\psi_1+b\psi_2=0$ and $aP(\bar z\phi_1)+bP(\bar z\phi_2)=0$.
\end{lem}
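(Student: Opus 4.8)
The cleanest route is to deduce this lemma from Lemma~\ref{lemma-2 to prove commutativity} by a duality argument, rather than repeating the coefficient computation. The plan is to introduce the antilinear involution $C$ on $L^2$ defined by $Cf=\bar z\bar f$. One checks immediately that $C$ is an isometry with $C^2=I$, that $C$ maps $H^2$ onto $H^{2\bot}$ and $H^{2\bot}$ onto $H^2$, and that it intertwines the projections and multiplication:
\[
CP=QC,\qquad CQ=PC,\qquad C(\phi f)=\bar\phi\,(Cf)\quad(\phi\in L^\infty).
\]
The first two are verified on the basis $\{z^n\}$ using $Cz^n=z^{-n-1}$, and the third follows from $\bar z\,\overline{\phi f}=\bar\phi\,(\bar z\bar f)$.

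Next I would apply $C$ to the defining identity. Starting from $\psi_1P(\phi_2 g)=\psi_2P(\phi_1 g)$ for $g\in H^{2\bot}$ and using the intertwining relations, the left side becomes $\bar\psi_1 Q(\bar\phi_2\,Cg)$ and the right side $\bar\psi_2 Q(\bar\phi_1\,Cg)$. Since $g\mapsto Cg$ is a bijection of $H^{2\bot}$ onto $H^2$, writing $h=Cg$ shows that the hypothesis of the present lemma is equivalent to
\[
\bar\psi_1 Q(\bar\phi_2 h)=\bar\psi_2 Q(\bar\phi_1 h)\qquad\textup{for all }h\in H^2.
\]
This is precisely the hypothesis of Lemma~\ref{lemma-2 to prove commutativity} applied to the data $\bar\phi_1,\bar\psi_1,\bar\phi_2,\bar\psi_2$, so one of the three alternatives there holds for those functions.

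Finally I would translate each alternative back. Alternative (a) of Lemma~\ref{lemma-2 to prove commutativity} says $\bar\phi_1,\bar\phi_2$ are analytic, i.e.\ $\phi_1,\phi_2$ are co-analytic, which is (a) here; alternative (b) reads $\bar\psi_1=\bar\psi_2=0$, i.e.\ (b) here. For (c), the condition $a\bar\psi_1+b\bar\psi_2=0$ conjugates to $\bar a\psi_1+\bar b\psi_2=0$, while $aQ(\bar\phi_1)+bQ(\bar\phi_2)=0$ is rewritten using the identity $Q(\bar\phi)=C\big(P(\bar z\phi)\big)$ (a one-line Fourier-coefficient check); by antilinearity of $C$ this becomes $\bar a\,P(\bar z\phi_1)+\bar b\,P(\bar z\phi_2)=0$. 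Thus with constants $\bar a,\bar b$ (not both zero) we obtain exactly condition (c) of the lemma. Since Lemma~\ref{lemma-2 to prove commutativity} is an equivalence, the same chain run backwards yields the converse, completing the proof.

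The main obstacle is purely bookkeeping: getting the intertwining relations $CP=QC$ and $C(\phi f)=\bar\phi(Cf)$ right and, above all, correctly matching the dictionary between $Q(\bar\phi)$ and $P(\bar z\phi)$ so that alternative (c) transforms into the stated form; everything else is formal. Alternatively, one may simply repeat the proof of Lemma~\ref{lemma-2 to prove commutativity} verbatim, replacing $Q$ by $P$, comparing coefficients of $z^{2}$ instead of $z^{-2}$, and using that a non-zero co-analytic function cannot vanish on a set of positive measure.
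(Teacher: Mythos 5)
Your main argument is correct, and it is a genuinely different route from the paper's. The paper gives no separate proof of this lemma at all: it states only that it can be proved ``in a similar way'' to Lemma~\ref{lemma-2 to prove commutativity}, i.e.\ by rerunning the Fourier-coefficient computation with $P$ in place of $Q$ and test functions from $H^{2\bot}$. You instead make that symmetry precise once and for all via the antilinear involution $Cf=\bar z\bar f$, and your bookkeeping checks out: $Cz^n=z^{-n-1}$ gives $CP=QC$ and $CQ=PC$, the relation $C(\phi f)=\bar\phi\,(Cf)$ is immediate, applying $C$ to $\psi_1P(\phi_2g)=\psi_2P(\phi_1g)$ and substituting $h=Cg$ does produce exactly the hypothesis of Lemma~\ref{lemma-2 to prove commutativity} for $(\bar\phi_1,\bar\psi_1,\bar\phi_2,\bar\psi_2)$, and the dictionary $Q(\bar\phi)=C\bigl(P(\bar z\phi)\bigr)$ (which one verifies on coefficients, since both sides equal $\sum_{k\geq 1}\overline{\hat\phi(k)}\,z^{-k}$) translates alternative (c) into the stated condition with constants $\bar a,\bar b$. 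Since $C$ is an involution, the equivalence runs both ways. What the reduction buys is that nothing needs to be re-proved, and it explains why condition (c) involves $P(\bar z\phi_i)$ rather than $P(\phi_i)$: the strictly positive coefficients are the mirror image under $C$ of the strictly negative ones. One small correction to your fallback remark about repeating the proof verbatim: the mirrored computation compares coefficients of $z^{0}$, not $z^{2}$ --- for $g,g'\in H^{2\bot}$ the functions $P(\phi g)$ lie in $H^2$, so in a product $P(\phi_2 g)P(\phi_1 g')$ the only ``clean'' coefficient is the constant term, which is the product of the two constant terms; also the test function must be $g=\bar z^{n_0}$ (where $\hat\phi_2(n_0)\neq 0$, $n_0\geq 1$), since the literal mirror $\bar z^{n_0-1}$ fails to lie in $H^{2\bot}$ when $n_0=1$. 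With those adjustments the direct repetition also goes through, but your duality argument is the cleaner of the two.
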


\textit{Proof of Theorem 6.3} : For the sufficient part we need to show that any one condition of (i), (ii) and (iii) implies equations (\ref{equation-1 in lemma-2}) and (\ref{equation-2 in lemma-2}) in Lemma \ref{lemma-1 to prove commutativity}. That this is true for condition (i) or (ii) is obvious. On the other hand, condition (iii) implies that $a(\alpha_1-\beta_1)=-b(\alpha_2-\beta_2)$, $aQ(\alpha_1)=-bQ(\alpha_2)$ and $aP(\bar z\beta_1)=-bP(\bar z\beta_2)$. But $aQ(\alpha_1)=-bQ(\alpha_2)$ is equivalent to saying that $aQ(\alpha_1f)=-bQ(\alpha_2f)$ for all $f\in H^2$, where as $aP(\bar z\beta_1)=-bP(\bar z\beta_2)$ is equivalent to saying that $aP(\beta_1g)=-bP(\beta_2g)$ for all $g\in H^{2\bot}$. Hence equations (\ref{equation-1 in lemma-2}) and (\ref{equation-2 in lemma-2}) follows. So the sufficient part is proved.

Now we prove the necessary part. So let $S_{\alpha_1,\beta_1}$ commutes with $S_{\alpha_2,\beta_2}$. By Lemma \ref{lemma-1 to prove commutativity}, equations (\ref{equation-1 in lemma-2}) and (\ref{equation-2 in lemma-2}) are true. But, equation (\ref{equation-1 in lemma-2}), by Lemma \ref{lemma-2 to prove commutativity}, implies that at least one of the following holds :

\noindent (1) Both $\alpha_1$ and $\alpha_2$ are analytic

\noindent (2) $\alpha_1=\beta_1$ and $\alpha_2=\beta_2$

\noindent (3) There are constants $a^\prime$ and $b^\prime$ with at least one of them non zero such that $a^\prime(\alpha_1-\beta_1)+b^\prime(\alpha_2-\beta_2)=0$ and  $a^\prime Q(\alpha_1)+b^\prime Q(\alpha_2)=0$.

\noindent Again, \ref{equation-2 in lemma-2}, by Lemma \ref{lemma-3 prove of commutativity}, implies that at lest one of the following holds :

\noindent (1$^\prime$) Both $\beta_1$ and $\beta_2$ are co-analytic

\noindent (2$^\prime$) $\alpha_1=\beta_1$ and $\alpha_2=\beta_2$

\noindent (3$^\prime$) There are constants $a^{\prime\prime}$ and $b^{\prime\prime}$ with at least one of them non zero such that $a^{\prime\prime}(\alpha_1-\beta_1)+b^{\prime\prime}(\alpha_2-\beta_2)=0$ and  $a^{\prime\prime} P(\bar z\beta_1)+b^{\prime\prime} P(\bar z\beta_2)=0$.

Since (2) or (2$^\prime$) implies (ii), we only need to consider the following four cases.

Case-1 : (1) and (1$^\prime$) are true. But this is nothing but condition (i).

Case-2 : (1) and (3$^\prime$) are true. (3$^\prime$) implies that $a^{\prime\prime}\alpha_1+b^{\prime\prime}\alpha_2=
a^{\prime\prime}\beta_1+b^{\prime\prime}\beta_2$. Since, by (1), $\alpha_1$ and $\alpha_2$ are analytic,
$a^{\prime\prime}\beta_1+b^{\prime\prime}\beta_2$ is analytic so that all of its negative Fourier coefficients are zero. But, all of its (strictly) positive Fourier coefficients are zero too since, by (3$^\prime$),  $P\big(\bar z(a^{\prime\prime}\beta_1+b^
{\prime\prime}\beta_2)\big)=0$. Therefore $a^{\prime\prime}\beta_1+b^{\prime\prime}\beta_2$ is nothing but a constant. So we get (iii) in this case.

Case-3 : (3) and (1$^\prime$) are true. This implies (iii). Proof is similar to the previous case.

Case-4 : (3) and (3$^\prime$) are true. We can assume that $\alpha_1\neq\beta_1$ or $\alpha_2\neq\beta_2$, because otherwise it will give (ii). So, without loss of generality assume that $\alpha_1\neq\beta_1$. This
will force both $b^\prime$ and $b^{\prime\prime}$ to be non zero and $a^{\prime}/b^\prime=a^{\prime\prime}/b^{\prime\prime}(=
d$ say). So, we have $d\alpha_1+\alpha_2=d\beta_1+\beta_2$, $Q(d\alpha_1+\alpha_2)=0$ and $P\big(\bar z(d\beta_1+\beta_2)\big)=0$. Therefore $P\big(\bar z(d\alpha_1+\alpha_2)\big)=0$. But, then $d\alpha_1+\alpha_2$ must be a constant and hence, this case implies (iii).

\section{Compactness}
In this section we shall discuss about the compactness of the operator $S_{\alpha,\beta}.$ The following theorem says that there are no non trivial compact operators $S_{\alpha,\beta}$. 
\begin{thm}
If $S_{\alpha,\beta}$ is compact then $\alpha=\beta=0$.
\end{thm}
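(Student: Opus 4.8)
The plan is to exploit the matrix characterization from Section 2 together with the fact that a compact operator must send a weakly convergent sequence to a norm-convergent one. The orthonormal basis $\{z^n\}_{n\in\mathbb{Z}}$ converges weakly to $0$ as $|n|\to\infty$ (Riemann--Lebesgue), so compactness of $S_{\alpha,\beta}$ forces $\|S_{\alpha,\beta}z^n\|\to 0$ as $n\to+\infty$ and as $n\to-\infty$. But for $n\geq 0$ we have $S_{\alpha,\beta}z^n=\alpha z^n$, so $\|S_{\alpha,\beta}z^n\|=\|\alpha z^n\|=\|\alpha\|$ for every $n\geq 0$; letting $n\to+\infty$ gives $\|\alpha\|=0$, hence $\alpha=0$. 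Likewise, for $n\leq -1$ we have $S_{\alpha,\beta}z^n=\beta z^n$, so $\|S_{\alpha,\beta}z^n\|=\|\beta\|$ for all such $n$; letting $n\to-\infty$ gives $\beta=0$.

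The only point needing care is the justification that a compact operator carries $z^n\to 0$ weakly into $S_{\alpha,\beta}z^n\to 0$ in norm. I would record this as the standard fact that a compact operator maps weakly convergent sequences to norm-convergent sequences, and that $\langle z^n,g\rangle=\overline{\hat g(n)}\to 0$ for every $g\in L^2$ by the convergence of Fourier coefficients, so indeed $z^n\rightharpoonup 0$ weakly. An equivalent and perhaps cleaner phrasing avoids weak convergence entirely: since $\{z^n\}_{n\geq 0}$ is an orthonormal sequence, for a compact operator $K$ one has $\|Kz^n\|\to 0$; this is immediate because the image of the closed unit ball under $K$ is relatively compact, and any sequence drawn from an orthonormal set has all its images clustering only at $0$.

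Concretely I would argue as follows. Suppose $S_{\alpha,\beta}$ is compact. Consider the sequence $(z^n)_{n\geq 0}$, which is orthonormal in $L^2$ and hence tends weakly to $0$. By compactness, $S_{\alpha,\beta}z^n\to 0$ in $L^2$-norm. But $\|S_{\alpha,\beta}z^n\|=\|\alpha z^n\|=\|\alpha\|$ for each $n\geq 0$, so $\|\alpha\|=0$ and $\alpha=0$. Repeating with the orthonormal sequence $(z^n)_{n\leq -1}$, which also tends weakly to $0$ as $n\to-\infty$, gives $\|\beta\|=0$ and $\beta=0$.

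There is no real obstacle here; the argument is short and the single fact to invoke is the behaviour of compact operators on orthonormal sequences. The only thing I would be mildly careful about is keeping the two directions ($n\to+\infty$ picking off $\alpha$ through $P$, and $n\to-\infty$ picking off $\beta$ through $Q$) clearly separated, since the action of $S_{\alpha,\beta}$ on $z^n$ switches from $\alpha z^n$ to $\beta z^n$ at the threshold between nonnegative and negative indices.
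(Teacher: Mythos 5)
Your proposal is correct and follows essentially the same route as the paper: both use that $S_{\alpha,\beta}z^n=\alpha z^n$ for $n\geq 0$ (resp.\ $\beta z^n$ for $n\leq -1$), so $\|S_{\alpha,\beta}z^n\|=\|\alpha\|$ (resp.\ $\|\beta\|$) is constant along an orthonormal sequence, while compactness forces these norms to tend to $0$. Your extra justification via weak convergence of orthonormal sequences is exactly the standard fact the paper invokes implicitly.
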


\begin{proof}
Let $S_{\alpha,\beta}$ is compact. Note that $S_{\alpha,\beta}z^n=\alpha z^n$ for all $n\geq 0$, so that $||S_{\alpha,\beta}z^n||=||\alpha||$ if $n\geq 0$. Since $1, z, z^2\cdots$ is a sequence of orthonormal elements, $||S_{\alpha,\beta}z^n||\rightarrow 0$ as $n\rightarrow 0$. Therefore $||\alpha||=0$ or $\alpha=0$. In a similar way, taking the sequence $\bar z,\bar z^2,\bar z^3\cdots$, we can show that $\beta=0$.
\end{proof}
Compact operators can not even go closer to $S_{\alpha,\beta}$.
\begin{thm}
Let $\alpha,\beta\in L^\infty$, and $K$ be a compact operator on $L^2$. Then $||S_{\alpha,\beta}-K||\geq\frac{1}{\sqrt 2}||S_{\alpha,\beta}||$. The constant $\frac{1}{\sqrt 2}$, appeared in the inequality, is the maximum one.
\end{thm}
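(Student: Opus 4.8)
The plan is to prove the sharp lower bound in two stages: first establish that $\|S_{\alpha,\beta}-K\|\geq\max\{\|\alpha\|_\infty,\|\beta\|_\infty\}$ for every compact $K$, and then combine this with the upper estimate $\|S_{\alpha,\beta}\|\leq\sqrt{\|\alpha\|_\infty^2+\|\beta\|_\infty^2}\leq\sqrt2\,\max\{\|\alpha\|_\infty,\|\beta\|_\infty\}$ from Theorem \ref{norm of S alpha beta}(ii). Together these give $\|S_{\alpha,\beta}-K\|\geq\max\{\|\alpha\|_\infty,\|\beta\|_\infty\}\geq\tfrac{1}{\sqrt2}\|S_{\alpha,\beta}\|$, which is the asserted inequality.

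To obtain $\|S_{\alpha,\beta}-K\|\geq\|\alpha\|_\infty$ I would compress everything to $H^2$. For $f\in H^2$ we have $S_{\alpha,\beta}f=\alpha f$, so $(S_{\alpha,\beta}-K)f=\alpha f-Kf$; using $\|(S_{\alpha,\beta}-K)f\|\geq\|P(S_{\alpha,\beta}-K)f\|=\|T_\alpha f-K_1f\|$, where $K_1:=PK|_{H^2}$ is a compact operator on $H^2$, and taking the supremum over unit vectors $f\in H^2$ yields $\|S_{\alpha,\beta}-K\|\geq\|T_\alpha-K_1\|\geq\|T_\alpha\|=\|\alpha\|_\infty$. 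The last inequality is the fact, recorded in the introduction, that a Toeplitz operator stays at distance at least $\|T_\phi\|$ from the compacts. The bound $\|S_{\alpha,\beta}-K\|\geq\|\beta\|_\infty$ is entirely parallel: one compresses to $H^{2\bot}$, where $S_{\alpha,\beta}g=\beta g$, applies $Q$, and arrives at $\|S_{\alpha,\beta}-K\|\geq\|\tilde{T_\beta}-K_2\|$ with $K_2:=QK|_{H^{2\bot}}$ compact. The main obstacle is that the distance-to-compacts estimate for $\tilde{T_\beta}$ is not quoted directly; I would supply it by transporting the Toeplitz result through the antiunitary $J:H^2\to H^{2\bot}$, $Jh=\bar z\bar h$, which carries $\tilde{T_\beta}$ to a Toeplitz operator and compacts to compacts, so that $\|\tilde{T_\beta}-K_2\|\geq\|\tilde{T_\beta}\|=\|\beta\|_\infty$.

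For the optimality of the constant $\tfrac{1}{\sqrt2}$ I would exhibit a single example attaining equality. Take $\alpha\equiv 1$ and $\beta(z)=z$; since $|\alpha|=|\beta|=1$ and $\alpha\bar\beta=\bar z\in H^{2\bot}$, Theorem \ref{norm of S alpha beta}(iv) gives $\|S_{1,z}\|=\sqrt2$. On the basis $\{z^n\}$ one computes $S_{1,z}z^n=z^n$ for $n\geq 0$ and $S_{1,z}z^n=z^{n+1}$ for $n\leq -1$; in particular both $z^0$ and $z^{-1}$ are sent to $1$, which is exactly the collision responsible for the norm $\sqrt2$. Removing this collision with the rank-one (hence compact) operator $Kf=\langle f,z^{-1}\rangle\,1$ produces $T:=S_{1,z}-K$, which sends $z^{-1}\mapsto 0$ and carries every other basis vector $z^n$ onto a distinct element of the orthonormal basis $\{z^m\}_{m\in\mathbb{Z}}$, covering it entirely. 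Hence $T$ is a partial isometry, so $\|T\|=1$, giving $\|S_{1,z}-K\|=1=\tfrac{1}{\sqrt2}\|S_{1,z}\|$. Since equality is attained, no constant larger than $\tfrac{1}{\sqrt2}$ can satisfy the inequality universally, so $\tfrac{1}{\sqrt2}$ is maximal.
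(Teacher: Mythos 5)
Your proof is correct, but it follows a genuinely different route from the paper's. The paper never compresses to the Hardy space: it multiplies by the isometries $S_{z^n,\bar z^n}$, writes $\|S_{\alpha,\beta}-K\|\geq\|S^*_{z^n,\bar z^n}(S^*_{\alpha,\beta}-K^*)\|\geq\|S_{\alpha z^n,\beta\bar z^n}\|-\|S^*_{z^n,\bar z^n}K^*\|$ using the product formula of Theorem \ref{thm-multiplicati is of the same form}, bounds $\|S_{\alpha z^n,\beta\bar z^n}\|\geq\max\{\|\alpha\|_\infty,\|\beta\|_\infty\}\geq\tfrac{1}{\sqrt2}\|S_{\alpha,\beta}\|$ via Theorem \ref{norm of S alpha beta}(ii), and lets $n\to\infty$, using that $S^*_{z^n,\bar z^n}\to 0$ strongly so that $\|S^*_{z^n,\bar z^n}K^*\|\to 0$ by compactness --- in effect redoing Coburn's shift trick internally in the $S_{\alpha,\beta}$ calculus. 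You instead quote the Toeplitz distance-to-compacts theorem as a black box, compressing by $P$ and $Q$ to get $\|S_{\alpha,\beta}-K\|\geq\|T_\alpha-K_1\|\geq\|\alpha\|_\infty$ and, via the antiunitary $J h=\bar z\bar h$ (which indeed satisfies $J^{-1}\tilde{T_\beta}J=T_{\bar\beta}$ and preserves norms and compactness), $\|S_{\alpha,\beta}-K\|\geq\|\beta\|_\infty$; combined with Theorem \ref{norm of S alpha beta}(ii) this gives the claim, and in fact your argument isolates the slightly stronger statement $\|S_{\alpha,\beta}-K\|\geq\max\{\|\alpha\|_\infty,\|\beta\|_\infty\}$, which is implicit but not stated in the paper's chain of inequalities. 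What the paper's route buys is self-containedness (it only uses results proved in the paper, whereas you lean on the external Toeplitz theorem, albeit one the paper records in its introduction) ; what yours buys is brevity and a cleaner structural explanation of where the bound comes from. Your sharpness example $S_{1,z}$ with $Kf=\langle f,z^{-1}\rangle 1$ is the mirror image of the paper's $S_{\bar z,1}$ with $Kf=\hat f(0)\bar z$: in both cases one checks $\|S\|=\sqrt2$ by part (iv) and that subtracting the rank-one collision term leaves a nonzero partial isometry, of norm $1$; your verification (images of the basis form an orthonormal set together with one zero vector, so $\|Tf\|^2=\sum_{n\neq-1}|\hat f(n)|^2$) is sound.
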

\begin{proof}
Let $n$ be a non-negative integer. Since the operator $S_{z^n,\bar z^n}$ sends $z^k$ to $z^{k+n}$ and $\bar z^{k+1}$ to $\bar z^{k+1+n}$ for $k\geq 0$, it follows that $S_{z^n,\bar z^n}$ is an isometry so that $||S^*_{z^n,\bar z^n}||=||S_{z^n,\bar z^n}||=1$. Therefore
\begin{eqnarray*}
||S_{\alpha,\beta}-K||&= & ||S^*_{\alpha,\beta}-K^*||\\&\geq& ||S^*_{z^n,\bar
z^n}(S^*_{\alpha,\beta}-K^*)||\\&= &||(S_{\alpha,\beta}S_{z^n\bar z^n})^*-
S^*_{z^n,\bar z^n}K^*||\\&=&||S^*_{\alpha z^n,\beta\bar z^n}-S^*_{z^n,\bar
z^n}K^*||\hspace{3mm}\textup{by}\hspace{1mm} \textup{Theorem}~\ref{thm-multiplicati is of the
same form}\\&\geq & ||S^*_{\alpha z^n,\beta\bar z^n}||-||S^*_{z^n,\bar z^n}
K^*||\\&=& ||S_{\alpha z^n,\beta\bar z^n}||-||S^*_{z^n,\bar z^n}K^*||.
\end{eqnarray*}
Now by Theorem \ref{norm of S alpha beta} (ii),
$$
||S_{\alpha z^n\beta\bar z^n}||\geq\textup{max}\{||\alpha||_\infty,||\beta||_\infty\}\geq\frac{1}{\sqrt 2}\sqrt{||\alpha||_\infty^2+||\beta||_\infty^2}\geq\frac{1}{\sqrt 2}||S_{\alpha,\beta}||.
$$
Again, for any $f$ in $L^2$, $S^*_{z^n,\bar z^n}f=P(\bar z^n f)+Q(z^n f)$ which goes to $0$ as $n\rightarrow \infty$. But $K$ being compact so is $K^*$. Therefore $||S^*_{z^n,\bar z^n}K^*||\rightarrow 0$. So we conclude the first part of the Theorem.

For the second part it is enough to show that there are $L^\infty$ functions $\alpha,\beta$ and a compact operator $K$ such that $||S_{\alpha,\beta}-K||=\frac{1}{\sqrt 2}||S_{\alpha,\beta}||$. Take $\alpha=\bar z$, $\beta=1$ and the finite rank operator (in particular compact) $K$ on $L^2$ defined by $Kf=\hat f(0)\bar z$. Then, it is easy to see that
$$
(S_{\bar z,1}-K)f=\sum_{n=1}^\infty\hat f(n)z^{n-1}+\sum_{n=-1}^{-\infty}\hat f(n)z^n
$$
so that
$$
||(S_{\bar z,1}-K)f||^2=\sum_{n\neq 0}|\hat f(n)|^2\leq ||f||^2.
$$
Again $||(S_{\bar z,1}-K)z||=1=||z||$. Therefore $||(S_{\bar z,1}-K)||=1$. On the other hand, by Theorem \ref{norm of S alpha beta} (iv), $||S_{\bar z,1}||=\sqrt 2$. Hence $||S_{\bar z,1}-K||=\frac{1}{\sqrt 2}||S_{\bar z,1}||$ as desired.
\end{proof}

\section{Spectrum}
In this section we discuss about the spectrum of the operator $S_{\alpha,\beta}.$

We can write
$$
S_{\alpha,\beta}=S_{\alpha,0}+S_{0,\beta}.
$$
It is easy to check that
$$
S^*_{\alpha,0}f=P(\bar{\alpha}f);\hspace{4mm}
S^*_{0,\beta}f=Q(\bar{\beta}f),
$$
so that
$$
S^*_{\alpha,\beta}f=P(\bar{\alpha}f)+Q(\bar{\beta}f).
$$
For an operator $T$, we denote its point spectrum by $\Pi_{0}(T),$ approximate point spectrum by $\Pi(T) $ and spectrum by $\sigma(T)$. First we prove the following lemma which will be useful later in discussing the spectrum.

\begin{lem}\label{l-1}
Let $\lambda\neq 0$. Then $(S_{\alpha,0}^*-\lambda I)f=g$ iff
\begin{eqnarray*}
Qf &=&-\frac{1}{\lambda}Qg,\\
(T_{\bar{\alpha}}-\lambda I)Pf &=& Pg+\frac{1}{\lambda}P(\bar\alpha Qg).
\end{eqnarray*}
In particular, $g\in H^2$ iff $f\in H^2$.
\end{lem}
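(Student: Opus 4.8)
The plan is to split the identity $(S_{\alpha,0}^*-\lambda I)f=g$ into its $H^2$ and $H^{2\bot}$ components by applying $P$ and $Q$ separately. The one structural fact driving everything is that $S_{\alpha,0}^*f=P(\bar\alpha f)$ always lies in $H^2$; consequently the entire $H^{2\bot}$ part of $(S_{\alpha,0}^*-\lambda I)f$ comes from the term $-\lambda f$, while the $H^2$ part is where the Toeplitz operator $T_{\bar\alpha}$ will emerge.

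For the forward direction I would first apply $Q$ to both sides. Since $Q\big(P(\bar\alpha f)\big)=0$, this gives $-\lambda Qf=Qg$, and as $\lambda\neq0$ we may divide to obtain $Qf=-\frac1\lambda Qg$, the first displayed equation. Next I would apply $P$. Using $P\big(P(\bar\alpha f)\big)=P(\bar\alpha f)$ together with the decomposition $f=Pf+Qf$, I would write $P(\bar\alpha f)=P(\bar\alpha Pf)+P(\bar\alpha Qf)=T_{\bar\alpha}Pf+P(\bar\alpha Qf)$, where $T_{\bar\alpha}$ is the Toeplitz operator on $H^2$. The $P$-component of the identity then reads $(T_{\bar\alpha}-\lambda I)Pf+P(\bar\alpha Qf)=Pg$; substituting the already established relation $Qf=-\frac1\lambda Qg$ into the cross term $P(\bar\alpha Qf)$ and transposing yields $(T_{\bar\alpha}-\lambda I)Pf=Pg+\frac1\lambda P(\bar\alpha Qg)$, the second displayed equation.

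The converse simply reverses these steps: starting from the two equations, I would reconstruct $P(\bar\alpha f)-\lambda f$ by computing its $Q$-part, namely $-\lambda Qf=Qg$, and its $P$-part, namely $(T_{\bar\alpha}-\lambda I)Pf+P(\bar\alpha Qf)$; feeding in the two hypotheses and the identity $P(\bar\alpha Qf)=-\frac1\lambda P(\bar\alpha Qg)$ collapses the $P$-part to $Pg$, so the total is $Pg+Qg=g$. The final assertion is immediate from the first equation alone, since $Qf=-\frac1\lambda Qg$ forces $Qf=0$ exactly when $Qg=0$, that is, $f\in H^2$ if and only if $g\in H^2$.

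I anticipate no genuine obstacle, as the argument is a routine orthogonal-decomposition computation. The only place demanding care is the bookkeeping of the cross term $P(\bar\alpha Qf)$: it is precisely this term, rewritten via the first equation, that produces the $\frac1\lambda P(\bar\alpha Qg)$ on the right-hand side of the second equation, so the two equations must be handled in tandem rather than independently.
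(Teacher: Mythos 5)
Your proposal is correct and follows essentially the same route as the paper: both decompose $f$ and $g$ into their $H^2$ and $H^{2\bot}$ components, extract $Qf=-\frac{1}{\lambda}Qg$ from the $H^{2\bot}$ part (using that $P(\bar\alpha f)\in H^2$), substitute this into the cross term $P(\bar\alpha Qf)$ to obtain the Toeplitz equation, and prove the converse by adding the two component equations back together. Your handling of the final assertion via the first equation alone also matches the paper's (implicit) reasoning.
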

\begin{proof}
Let $(S_{\alpha,0}^*-\lambda I)f=g$. Writing $f=Pf+Qf$ and $g=Pg+Qg$ we get
$$
P(\bar\alpha Pf)+P(\bar\alpha Qf)-\lambda Pf-\lambda Qf=Pg+Qg
$$
Which gives
\begin{eqnarray}\label{s-1}
Qf &=&-\frac{1}{\lambda}Qg,
\end{eqnarray}
\begin{eqnarray}\label{s-2}
P(\bar\alpha Pf)-\lambda Pf
=Pg-P(\bar\alpha Qf).
\end{eqnarray}
Using (\ref{s-1}) in (\ref{s-2}), we get
$$(T_{\bar{\alpha}}-\lambda I)Pf= Pg+\frac{1}{\lambda}P(\bar\alpha Qg).$$
Hence the necessary part of the lemma is proved. To prove the sufficient part, first note that, the given conditions clearly imply (\ref{s-1}) and (\ref{s-2}). Now, adding these two equations we get that $(S_{\alpha,0}^*-\lambda I)f=g$.
\end{proof}

\begin{defn} For $\alpha \in L^\infty,$ the \textit{essential range} of $\alpha$ is defined to be $$ \mbox{ess ran}~~ \alpha = \{ \lambda : m \{ e^{i\theta} : | \alpha (e^{i\theta}) - \lambda| < \epsilon\} > 0, \mbox{for all}~~ \epsilon > 0\},$$ where $m$ is the normalized Lebesgue measure.
\end{defn}
The following theorem gives some descriptions about the spectrum for certain special cases.
\begin{thm}\label{spectrum}
Let $\alpha,\beta\in L^{\infty}$.

\noindent\textup{(i)} \textup{ess ran $\alpha$} $\cup$ \textup{ess ran $\beta$} $\subset\Pi(S_{\alpha,\beta})$

\noindent \textup{(ii)} $\sigma(S_{\alpha,0})=\sigma(T_\alpha)\cup\{0\}$ and $\sigma(S_{0,\beta})=\sigma(\tilde{T}_\beta)\cup\{0\}$.

\noindent \textup{(iii)} If $\alpha$ is analytic or  $\beta$ is co-analytic, then $\sigma(S_{\alpha,\beta})\subset \sigma(T_\alpha)\cup\sigma(\tilde{T}_\beta)\cup\{0\}$.

\noindent {(iv)} If $\alpha$ is analytic and $\beta$ is co-analytic, then
$\sigma(S_{\alpha,\beta})\cup\{0\}= \sigma(T_\alpha)\cup\sigma(\tilde{T}_\beta)\cup\{0\}$.
\end{thm}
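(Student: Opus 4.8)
The plan is to handle the four parts separately, building approximate eigenvectors for (i), reading invertibility off Lemma~\ref{l-1} for (ii), and exploiting the block structure of $S_{\alpha,\beta}$ relative to $L^2=H^2\oplus H^{2\bot}$ for (iii) and (iv). For (i), fix $\lambda\in\textup{ess ran}\,\alpha$ and $\epsilon>0$; then $E_\epsilon=\{e^{i\theta}:|\alpha(e^{i\theta})-\lambda|<\epsilon\}$ has positive measure, so $h=\chi_{E_\epsilon}/\sqrt{m(E_\epsilon)}$ is a unit vector with $\|(\alpha-\lambda)h\|\le\epsilon$. The difficulty is that $h$ need not lie in $H^2$, so $S_{\alpha,\beta}h\neq\alpha h$. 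To repair this I would raise the frequency: set $f_n=z^n h$, still a unit vector, and note $\|Qf_n\|^2=\sum_{k\le -n-1}|\hat h(k)|^2\to 0$. Since $(S_{\alpha,\beta}-\lambda)f_n=(\alpha-\lambda)Pf_n+(\beta-\lambda)Qf_n$ and multiplication by $z^n$ is an isometry, one gets $\|(S_{\alpha,\beta}-\lambda)f_n\|\le\epsilon+\big(\|\alpha-\lambda\|_\infty+\|\beta-\lambda\|_\infty\big)\|Qf_n\|$, which is arbitrarily small; a diagonal choice of $\epsilon\downarrow 0$ and $n\to\infty$ gives $\lambda\in\Pi(S_{\alpha,\beta})$. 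For $\lambda\in\textup{ess ran}\,\beta$ the same argument works with $f_n=\bar z^n h$, for which $\|Pf_n\|\to 0$.

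For (ii) I would read invertibility directly off Lemma~\ref{l-1}. For $\lambda\neq 0$ that lemma shows $(S_{\alpha,0}^*-\lambda I)f=g$ is uniquely solvable for every $g$ exactly when $T_{\bar\alpha}-\lambda I$ is invertible on $H^2$: the component $Qf=-\tfrac1\lambda Qg$ is always determined, while $Pf$ is determined iff $T_{\bar\alpha}-\lambda I$ can be inverted, and boundedness of the inverse follows from the bounded inverse theorem. Hence for $\lambda\neq 0$, $\lambda\in\sigma(S_{\alpha,0}^*)\Leftrightarrow\lambda\in\sigma(T_{\bar\alpha})$. Since $\sigma(S_{\alpha,0}^*)=\overline{\sigma(S_{\alpha,0})}$ and, by $T_\alpha^*=T_{\bar\alpha}$, $\sigma(T_{\bar\alpha})=\overline{\sigma(T_\alpha)}$, this translates to $\mu\in\sigma(S_{\alpha,0})\Leftrightarrow\mu\in\sigma(T_\alpha)$ for every $\mu\neq 0$. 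As $H^{2\bot}\subset\ker S_{\alpha,0}$ forces $0\in\sigma(S_{\alpha,0})$, I obtain $\sigma(S_{\alpha,0})=\sigma(T_\alpha)\cup\{0\}$. The statement for $S_{0,\beta}$ follows from the analogue of Lemma~\ref{l-1} got by interchanging the roles of $P$ and $Q$, together with $\tilde T_{\bar\beta}=\tilde T_\beta^*$.

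For (iii) and (iv) I would use invariance. If $\alpha\in H^\infty$ then $S_{\alpha,\beta}f=\alpha f\in H^2$ for $f\in H^2$, so $H^2$ is invariant; relative to $H^2\oplus H^{2\bot}$ this makes $S_{\alpha,\beta}$ block upper triangular with diagonal blocks $T_\alpha$ (since $P(\alpha f)=\alpha f$ on $H^2$) and $\tilde T_\beta$ (since $Q(S_{\alpha,\beta}g)=Q(\beta g)$ on $H^{2\bot}$). As the spectrum of a block triangular operator lies in the union of the spectra of its diagonal blocks, $\sigma(S_{\alpha,\beta})\subset\sigma(T_\alpha)\cup\sigma(\tilde T_\beta)\cup\{0\}$. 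If instead $\beta$ is co-analytic then $\beta g\in H^{2\bot}$ for $g\in H^{2\bot}$, so $H^{2\bot}$ is invariant and $S_{\alpha,\beta}$ is block lower triangular with the same diagonal, giving the same inclusion; this proves (iii). For (iv), when $\alpha$ is analytic and $\beta$ is co-analytic both subspaces are invariant, the decomposition is reducing, and $S_{\alpha,\beta}=T_\alpha\oplus\tilde T_\beta$, so $\sigma(S_{\alpha,\beta})=\sigma(T_\alpha)\cup\sigma(\tilde T_\beta)$; adjoining $\{0\}$ to both sides yields the stated equality.

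I expect the genuine work to be concentrated in (i): unlike the Toeplitz case one must simultaneously control the $\alpha Pf$ and $\beta Qf$ pieces, and the frequency-shift trick, which drives $\|Qf_n\|$ (resp.\ $\|Pf_n\|$) to $0$, is precisely what decouples them so that the near-constant behaviour of $\alpha$ (resp.\ $\beta$) on $E_\epsilon$ can be exploited. Parts (ii)--(iv) are then essentially bookkeeping: (ii) is a direct corollary of Lemma~\ref{l-1} and the adjoint identities, and (iii)--(iv) reduce to the elementary spectral theory of block-triangular and block-diagonal operators once the invariance of $H^2$ or $H^{2\bot}$ has been recorded.
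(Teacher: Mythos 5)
Your proposal is correct, and it splits into two halves relative to the paper. For (i) and (ii) you are essentially on the paper's track: the paper proves (i) by citing the proof of Theorem 3.3.1 of \cite{MR} for the existence of unit vectors $h_n\in H^2$ (resp.\ $g_n\in H^{2\bot}$) with $\|(M_\alpha-\lambda)h_n\|\to 0$, and your normalized-indicator-plus-frequency-shift construction is precisely that argument written out; your estimate $\|(S_{\alpha,\beta}-\lambda)f_n\|\le\epsilon+\big(\|\alpha-\lambda\|_\infty+\|\beta-\lambda\|_\infty\big)\|Qf_n\|$ is valid because $(\alpha-\lambda)Pf_n=(\alpha-\lambda)f_n-(\alpha-\lambda)Qf_n$ and $\|(\alpha-\lambda)f_n\|=\|(\alpha-\lambda)h\|\le\epsilon$, and $\|Qf_n\|\to 0$ is a correct tail computation. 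For (ii), both you and the paper read invertibility off the ``iff'' of Lemma~\ref{l-1}; the paper merely spells out the four implications (injectivity and surjectivity, in both directions) that your phrase ``uniquely solvable exactly when'' compresses, so in a final write-up you should record the converse direction (invertibility of $S^*_{\alpha,0}-\lambda I$ forcing that of $T_{\bar\alpha}-\lambda I$) explicitly, though it is immediate from the lemma. Where you genuinely diverge is (iii) and (iv): the paper stays with $S^*_{\alpha,\beta}-\lambda I$ and proves injectivity and surjectivity by hand, solving the $H^2$ and $H^{2\bot}$ component equations under invertibility of $S^*_{\alpha,0}-\lambda I$ and $S^*_{0,\beta}-\lambda I$ (i.e.\ it establishes $\sigma(S_{\alpha,\beta})\subset\sigma(S_{\alpha,0})\cup\sigma(S_{0,\beta})$ and quotes (ii)), and for (iv) it separately proves $\sigma(S_{\alpha,0})\cup\sigma(S_{0,\beta})\subset\sigma(S_{\alpha,\beta})\cup\{0\}$; you instead use invariance of $H^2$ (when $\alpha$ is analytic) or of $H^{2\bot}$ (when $\beta$ is co-analytic) to put $S_{\alpha,\beta}$ in block triangular form with diagonal blocks $T_\alpha$ and $\tilde{T}_\beta$, and invoke the standard facts that a block triangular operator has spectrum inside the union of the diagonal spectra (justified by the explicit inverse with entries $(A-\lambda)^{-1}$, $-(A-\lambda)^{-1}C(B-\lambda)^{-1}$, $(B-\lambda)^{-1}$) and that $\sigma(A\oplus B)=\sigma(A)\cup\sigma(B)$. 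Your route is shorter, avoids adjoints entirely, and in fact proves slightly more than the theorem asserts: in (iii) you obtain $\sigma(S_{\alpha,\beta})\subset\sigma(T_\alpha)\cup\sigma(\tilde{T}_\beta)$ without adjoining $\{0\}$, and in (iv) the exact equality $\sigma(S_{\alpha,\beta})=\sigma(T_\alpha)\cup\sigma(\tilde{T}_\beta)$ --- the $\{0\}$'s in the paper's statement are artifacts of its passage through $S_{\alpha,0}$ and $S_{0,\beta}$, whose spectra always contain $0$ (as you note, $H^{2\bot}\subset\ker S_{\alpha,0}$). What the paper's computation buys in exchange is the intermediate inclusion $\sigma(S_{\alpha,\beta})\subset\sigma(S_{\alpha,0})\cup\sigma(S_{0,\beta})$ and self-containedness within the $S_{\alpha,\beta}$ calculus, at the cost of length and of the spurious $\{0\}$.
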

\begin{proof}
(i) Let $M_{\alpha}$ denotes the usual multiplication operator on $L^2$ given by $M_{\alpha} (f) = \alpha f.$ Let $\lambda\in \textup{ess ran}~~  \alpha$. Then (see the proof of Theorem 3.3.1, page-108, \cite{MR}) there is a sequence of unit norm functions $h_n\in H^2$ such that $||(M_{\alpha}-\lambda)h_n||\rightarrow 0$. But, note that, $h_n$ being in $H^2$, $(M_{\alpha}-\lambda)h_n=(S_{\alpha,\beta}-\lambda)h_n$. Therefore $||(S_{\alpha,\beta}-\lambda)h_n||\rightarrow 0$ proving that $\lambda\in\Pi(S_{\alpha,\beta})$. Hence \textup{ess ran $\alpha$} $\subset\Pi(S_{\alpha,\beta})$. Now let $\lambda\in\textup{ess ran}$ $\beta$. Looking at the same proof (i.e. proof of Theorem 3.3.1, page-108, \cite{MR}) it is not hard to see that there is an sequence of unit functions $g_n$ in $H^{2\bot}$ such that $||(M_\beta-\lambda)g_n||\rightarrow 0$. But, $g_n$ being in $H^{2\bot}$,  $(M_\beta-\lambda)g_n=(S_{\alpha,\beta}-\lambda)g_n $. It follows that $\lambda\in\Pi(S_{\alpha,\beta})$. Therefore \textup{ess ran $\beta$} $\subset\Pi(S_{\alpha,\beta})$. This completes the proof of (i).

(ii) Clearly $0$ is in the spectrum of $S_{\alpha , 0}$. So, we have to prove that non zero spectrum of $S_{\alpha ,0}$ is same as that of the Toeplitz operator $T_\alpha$. Since spectrum of the adjoint of an operator is same as the conjugate of the spectrum of the operator, it is enough to show that non zero spectrum of $S^*_{\alpha0}$ is same as that of $T^*_\alpha=T_{\bar\alpha}$. So take $\lambda\neq 0$. We have to show that $S_{\alpha , 0}^*-\lambda I$ is invertible iff $T_{\bar\alpha}-\lambda I$ is invertible. First we note that $(S^*_{\alpha, 0}-\lambda I)f=(T_{\bar\alpha}-\lambda I)f$ for all $f\in H^2$.

If $S^*_{\alpha, 0}-\lambda I$ is injective then $(S^*_{\alpha, 0}-
\lambda I)f\neq 0$ for all non zero $f$ in $L^2$ which implies that
$(T_{\bar\alpha}-\lambda I)f\neq 0$ for all non zero $f$ in $H^2$ and hence $T_{\bar\alpha}-\lambda I$ is injective.

If $S^*_{\alpha , 0}-\lambda I$ is onto then for any $g\in H^2$ there is a $f\in L^2$ such that $(S_{\alpha, 0}^*-\lambda I)f=g$. But then by Lemma \ref{l-1}, $f\in H^2$. Therefore $(T_{\bar\alpha}-\lambda I)f=g$. Hence $T_{\bar\alpha}-\lambda I$ is onto.

Let   $T_{\bar\alpha}-\lambda I$ is injective. Then we shall show that  $S^*_{\alpha, 0}-\lambda I$ is also injective.
Let $(S^*_{\alpha, 0}-\lambda I) f = 0.$ Then $P(\bar{\alpha}f) - \lambda f = 0.$ Applying  $Q$ to both sides we have $Qf = 0.$ Which implies that $f \in H^2.$ As a result we have $(T_{\bar\alpha}-\lambda I) f = 0$ and which implies $f = 0$ proving the injectivity of  $S^*_{\alpha, 0}-\lambda I.$

Now assume that $T_{\bar\alpha}-\lambda I$ is onto. Let $g\in L^2$. There exist $h\in H^2$ such that
$$(T_{\bar\alpha}-\lambda I)h=Pg+\frac{1}{\lambda}P(\bar\alpha Qg).
$$
Define $ f\in L^2$ by $Qf=-\frac{1}{\lambda}Qg$ and $Pf=h$. Then, by Lemma \ref{l-1}, $(S^*_{\alpha, o}-\lambda I)f=g$ proving the ontoness of $S^*_{\alpha, 0}-\lambda I$. This finishes the proof of the fact that $\sigma(S_{\alpha, 0})=\sigma(T_\alpha)\cup\{0\}$. The proof of $\sigma(S_{0, \beta})=\sigma(\tilde{T}_\beta)\cup\{0\}$ is similar.

(iii) Let $\beta\in H^{2\bot}$. In view of (ii) it is enough to show that $\sigma(S_{\alpha, \beta})\subset \sigma(S_{\alpha, 0})\cup\sigma(S_{0,\beta })$ which is again equivalent to showing that $\sigma(S^*_{\alpha,\beta})\subset \sigma(S^*_{\alpha, 0})\cup\sigma(S^*_{0,\beta})$. Let $\lambda\neq 0$ be such that both $S^*_{\alpha, 0}-\lambda I$ and $S^*_{0,\beta}-\lambda I$ are invertible. We need to show that $S_{\alpha,\beta}^*-\lambda I$ is invertible.  If $(S^*_{\alpha,\beta}-\lambda )f=0$, writing $f=Pf+Qf$, and then comparing the $H^2$, $H^{2\bot}$ components we get
\begin{eqnarray}\label{s-3}
P(\bar\alpha Pf)+P(\bar\alpha Qf)-\lambda Pf=0,
\end{eqnarray}
$$
Q(\bar\beta Pf)+Q(\bar\beta Qf)-\lambda Qf=0.
$$
Since $\beta$ is co-analytic, the second equation gives
$
(S^*_{0,\beta}-\lambda I)Qf=0.
$
But $(S^*_{0,\beta}-\lambda I)$ being injective, $Qf=0$. Therefore (\ref{s-3}) gives $(S^*_{\alpha, 0}-\lambda I)Pf=0$
which, by the injectivity of $S^*_{\alpha, 0}-\lambda I$, implies that $Pf=0$. Hence $f=0$. Therefore $(S^*_{\alpha,\beta}-\lambda I)$ is injective. To prove its ontoness, let $g\in L^2$. Since $S^*_{0,\beta}-\lambda I$ is onto, there exists $h_1\in L^2$ such that $(S^*_{0,\beta}-\lambda I)h_1=Qg$. Applying the operator $P$, we get that $Ph_1=0$ or $h_1\in H^{2\bot}$. Since $S_{\alpha, 0}^*-\lambda I$ is onto, there exist $h_2\in L^2$ such that $(S^*_{\alpha, 0}-\lambda I)h_2=Pg-P(\bar\alpha h_1)$. Applying $Q$ to both side, we see that $Qh_2=0$ or $h_2\in H^2$. Now define $f=h_1+h_2$ so that $Qf=h_1$ and $Pf=h_2$. With this definition of $f$ it is not hard to see that
\begin{eqnarray}\label{s-4}
P(\bar\alpha Pf)+P(\bar\alpha Qf)-\lambda Pf=Pg,
\end{eqnarray}
$$
Q(\bar\beta Qf)-\lambda Qf=Qg.
$$
Since $\beta$ is co-analytic, the last equation is equivalent to

\begin{eqnarray}\label{s-5}
Q(\bar\beta Pf)+Q(\bar\beta Qf)-\lambda Qf=Qg.
\end{eqnarray}
Adding (\ref{s-4}) and
(\ref{s-5}) we get $(S^*_{\alpha,\beta}-\lambda I)f=g$. Therefore $S^*_{\alpha,\beta}-\lambda I$ is onto. This completes the proof of (ii) when $\beta$ is co-analytic. The proof for analytic $\alpha$ is similar.

(iv) In view of (i) and (ii), it is enough to show that $\sigma(S_{\alpha, 0})\cup\sigma(S_{0,\beta})\subset \sigma(S_{\alpha, \beta})\cup\{0\}$. So let $\lambda\notin \sigma(S_{\alpha,\beta})\cup\{0\}$. We have to show that $\lambda\notin\sigma(S_{\alpha, 0})$ and $\lambda\notin\sigma(S_{0, \beta})$. If $(S_{\alpha, 0}-\lambda I)f=0$, then $\alpha$ being analytic, $Qf=0$ and $(\alpha-\lambda)Pf=0$. Again $(S_{\alpha}-\lambda I)Pf=(\alpha-\lambda)Pf=0$. Since $S_{\alpha,\beta}$ is injective, $Pf=0$. hence $f=0$. Therefore $(S_{\alpha, 0}-\lambda I)$ is injective. Next, let $g\in L^2$. There exists $h\in L^2$ such that $(S_{\alpha,\beta}-\lambda I)h=Pg$. Applying the operator $P$ both side we get $(\alpha-\lambda)Ph=Pg$. Now define $f\in L^2$ by $Pf=Ph$ and $Qf=-\frac{1}{\lambda}Qg$. With this definition of $f$, it is easy to see that $(S_{\alpha, 0}-\lambda I)f=g$ proving the ontoness of $S_{\alpha, 0}-\lambda I$. So we have proved that $S_{\alpha, 0}-\lambda I$ is invertible and hence $\lambda\notin \sigma(S_{\alpha, 0})$. In a similar way we can prove that $\lambda\notin \sigma(S_{0,\beta})$.
\end{proof}
Like the Toeplitz operator, the spectrum of $S_{\alpha,\beta}$ can be known completely provided $\alpha$, $\beta$ are continuous functions. Recall the definition of index of a continuous curve $\phi:S^1\rightarrow\mathbb{C}$ at a point $a\notin\textup{range}\,\phi$ :
$$
\textup{ind}_a\phi=\frac{1}{2\pi i}\int_\phi\frac{1}{z-a}dz.
$$
The index is also called the winding number of $\phi$ around $a$.
We have the following theorem giving complete information about the spectrum of $S_{\alpha,\beta}$, when $\alpha$ and $\beta$ are continuous.
\begin{thm}\label{spectrum when alpha and beta are continuous}
Let both $\alpha$ and $\beta$ be continuous. Then
$$
\sigma(S_{\alpha,\beta})=\textup{range}\,\alpha\cup \textup{range}\,\beta\cup\left\{a\notin (\textup{range}\,\alpha\cup \textup{range}\,\beta):\textup{ind}_a\alpha\neq\textup{ind}_a\beta\right\}.
$$
\end{thm}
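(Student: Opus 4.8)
The plan is to characterize when $S_{\alpha,\beta}-\lambda I$ fails to be invertible. Since $(S_{\alpha,\beta}-\lambda I)f=(\alpha-\lambda)Pf+(\beta-\lambda)Qf$, we have $S_{\alpha,\beta}-\lambda I=S_{\alpha-\lambda,\beta-\lambda}$, so it suffices to decide invertibility of $S_{\gamma,\delta}$ for the continuous functions $\gamma=\alpha-\lambda$ and $\delta=\beta-\lambda$. One inclusion comes for free: for a continuous function the essential range coincides with the range, so Theorem \ref{spectrum}(i) already gives $\textup{range}\,\alpha\cup\textup{range}\,\beta\subseteq\Pi(S_{\alpha,\beta})\subseteq\sigma(S_{\alpha,\beta})$. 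Hence I only need to treat $\lambda\notin\textup{range}\,\alpha\cup\textup{range}\,\beta$, i.e. the case where $\gamma$ and $\delta$ are nowhere zero and, being continuous, bounded away from $0$.

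The second step is a Fredholm analysis. Writing operators as $2\times 2$ blocks with respect to $L^2=H^2\oplus H^{2\bot}$, one has $S_{\alpha,\beta}=\begin{pmatrix}T_\alpha & PM_\beta Q\\ QM_\alpha P & \tilde T_\beta\end{pmatrix}$, where $M_\phi$ denotes multiplication by $\phi$. When $\alpha$ is continuous the block $QM_\alpha P$ is a Hankel operator, hence compact (Hartman's theorem), and likewise $PM_\beta Q=(QM_{\bar\beta}P)^*$ is compact when $\beta$ is continuous. Thus $S_{\alpha,\beta}$ differs from $T_\alpha\oplus\tilde T_\beta$ by a compact operator, so it is Fredholm exactly when $T_\alpha$ and $\tilde T_\beta$ both are (which, for continuous symbols, happens precisely when $\alpha$ and $\beta$ are nowhere zero), and its index equals $\textup{ind}\,T_\alpha+\textup{ind}\,\tilde T_\beta$. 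Here $\textup{ind}\,T_\alpha=-\textup{ind}_0\alpha$ is classical. For $\tilde T_\beta$ I would compute directly on the basis $\{z^{-j}\}_{j\geq1}$ of $H^{2\bot}$ that $\tilde T_{z^k}$ has index $k$ for every $k\in\mathbb{Z}$, and then invoke the norm-continuity of $\beta\mapsto\tilde T_\beta$ (from $\|\tilde T_{\beta_1}-\tilde T_{\beta_2}\|\le\|\beta_1-\beta_2\|_\infty$) together with homotopy invariance of the Fredholm index to conclude $\textup{ind}\,\tilde T_\beta=\textup{ind}_0\beta$ for all continuous nowhere-zero $\beta$. Applied to $\gamma$ and $\delta$ this yields, for $\lambda\notin\textup{range}\,\alpha\cup\textup{range}\,\beta$,
$$\textup{ind}(S_{\alpha,\beta}-\lambda I)=\textup{ind}_\lambda\beta-\textup{ind}_\lambda\alpha.$$

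It remains to convert Fredholm data into invertibility. If $\textup{ind}_\lambda\alpha\neq\textup{ind}_\lambda\beta$ the index above is nonzero, so $S_{\alpha,\beta}-\lambda I$ cannot be invertible and $\lambda\in\sigma(S_{\alpha,\beta})$. If instead $\textup{ind}_\lambda\alpha=\textup{ind}_\lambda\beta$, the operator is Fredholm of index $0$, so I only need injectivity. Suppose $S_{\gamma,\delta}f=0$, i.e. $\gamma Pf=-\delta Qf$. Dividing by $\delta$ gives $\rho\,Pf=-Qf\in H^{2\bot}$, where $\rho=\gamma/\delta$ is continuous, nowhere zero, and of winding number $\textup{ind}_0\gamma-\textup{ind}_0\delta=0$. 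Thus $P(\rho\,Pf)=0$, that is $Pf\in\ker T_\rho$. Since $\textup{ind}\,T_\rho=-\textup{ind}_0\rho=0$, the Coburn alternative (Introduction) forces $T_\rho$ to be injective, whence $Pf=0$ and then $Qf=0$. So $S_{\gamma,\delta}$ is injective, and being Fredholm of index $0$ it is invertible, giving $\lambda\notin\sigma(S_{\alpha,\beta})$. Combining the two cases with the free inclusion yields the asserted equality.

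The main obstacle I anticipate is the index bookkeeping in the middle step: justifying that the two off-diagonal blocks are genuinely compact (Hartman's theorem) and pinning down $\textup{ind}\,\tilde T_\beta=\textup{ind}_0\beta$ with the correct sign, since an error here would flip the winding-number condition. By contrast, the passage from index $0$ to invertibility, which might look like the delicate analytic point, becomes routine once it is phrased through the Toeplitz operator $T_\rho$ and the Coburn alternative.
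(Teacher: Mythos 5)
Your proposal is correct, but it reaches the theorem by a genuinely different route than the paper. The paper stays elementary and constructive: it first factors out the invertible multiplication operator $S_{\beta,\beta}$ (via Theorem \ref{thm-multiplicati is of the same form}) to reduce the problem to $S_{\phi,1}$ with $\phi=\alpha/\beta$; it then approximates $\phi$ by a nonvanishing trigonometric polynomial $p$ (the perturbation $1+\psi$ being absorbed by a Neumann series), factors $p$ explicitly as $c\,z^{m-n+k}uv$ with $u,u^{-1}$ analytic and $v,v^{-1}$ co-analytic, and settles invertibility by two hand-made lemmas: $S_{cu,v^{-1}}$ has the explicit inverse $S_{(cu)^{-1},v}$ (Lemma \ref{invertibility when alpha and beta....}), and $S_{z^n\alpha,1}$ is invertible iff $n=0$, proved by exhibiting a missing preimage when $n>0$ and an explicit kernel vector when $n<0$ (Lemma \ref{invertibility of S-z-nalpha,1}). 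You instead essentialize: the block decomposition of $S_{\alpha,\beta}$ over $H^2\oplus H^{2\bot}$ has Hankel off-diagonal entries, compact for continuous symbols by Hartman's theorem, so $S_{\alpha,\beta}-\lambda I$ is a compact perturbation of $T_{\alpha-\lambda}\oplus\tilde{T}_{\beta-\lambda}$; your index bookkeeping is right ($\textup{ind}\,T_{\alpha-\lambda}=-\textup{ind}_\lambda\,\alpha$, and the basis computation $\textup{ind}\,\tilde{T}_{z^k}=k$ together with the homotopy $\beta_t=z^ke^{tg}$ through nonvanishing symbols gives $\textup{ind}\,\tilde{T}_{\beta-\lambda}=\textup{ind}_\lambda\,\beta$, so $\textup{ind}(S_{\alpha,\beta}-\lambda I)=\textup{ind}_\lambda\,\beta-\textup{ind}_\lambda\,\alpha$); and the upgrade from Fredholm of index zero to invertibility via injectivity of $T_\rho$, $\rho=\gamma/\delta$, is sound (index zero plus the Coburn alternative kills both kernel and cokernel of $T_\rho$; alternatively you could simply quote the spectral description of Toeplitz operators with continuous symbol from the Introduction to get $0\notin\sigma(T_\rho)$). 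The trade-off: the paper's argument is essentially self-contained, using only its own Theorem \ref{thm-multiplicati is of the same form} and elementary factorization, and it produces explicit inverses and kernel vectors; your argument imports heavier classical machinery (Hartman's theorem, the Fredholm criterion and index formula for Toeplitz operators with continuous symbol, homotopy invariance of the index) but yields strictly more information in return, namely that the essential spectrum of $S_{\alpha,\beta}$ is $\textup{range}\,\alpha\cup\textup{range}\,\beta$ and that the Fredholm index at $\lambda$ equals the difference of winding numbers --- data invisible in the paper's approach.
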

The proof of the above theorem is an adaptation of the method used to prove the corresponding result of Toeplitz operator (see Theorem 3.3.18, page-116, \cite{MR}). We need several lemmas.
\begin{lem}\label{invertibility when alpha and beta....}
Let $\alpha\in H^\infty$, $\beta\in\overline{H^\infty}$  such that their inverses exist and $\alpha^{-1}\in H^\infty$, $\beta^{-1}\in\overline{H^\infty}$.  Then $S_{\alpha,\beta}$ is invertible. 
\end{lem}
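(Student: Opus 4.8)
The plan is to exhibit an explicit two-sided inverse, namely $S_{\alpha^{-1},\beta^{-1}}$. Since $\alpha,\alpha^{-1}\in H^\infty$ and $\beta,\beta^{-1}\in\overline{H^\infty}$, all four symbols lie in $L^\infty$, so $S_{\alpha^{-1},\beta^{-1}}$ is a bounded operator on $L^2$ and is a legitimate candidate. The whole argument then reduces to two applications of Theorem~\ref{thm-multiplicati is of the same form}.

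First I would compute the product $S_{\alpha^{-1},\beta^{-1}}S_{\alpha,\beta}$. Here the right-hand factor has analytic first symbol $\alpha$ and co-analytic second symbol $\beta$, so the second alternative in the hypothesis of Theorem~\ref{thm-multiplicati is of the same form} is satisfied; the theorem therefore applies and gives
$$
S_{\alpha^{-1},\beta^{-1}}S_{\alpha,\beta}=S_{\alpha^{-1}\alpha,\,\beta^{-1}\beta}=S_{1,1}.
$$
Next I would compute the product in the opposite order, $S_{\alpha,\beta}S_{\alpha^{-1},\beta^{-1}}$. Now the right-hand factor has first symbol $\alpha^{-1}$, which is analytic because $\alpha^{-1}\in H^\infty$, and second symbol $\beta^{-1}$, which is co-analytic because $\beta^{-1}\in\overline{H^\infty}$; again the second alternative in the hypothesis holds, and Theorem~\ref{thm-multiplicati is of the same form} yields
$$
S_{\alpha,\beta}S_{\alpha^{-1},\beta^{-1}}=S_{\alpha\alpha^{-1},\,\beta\beta^{-1}}=S_{1,1}.
$$

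Finally I would observe that $S_{1,1}f=Pf+Qf=f$ for every $f\in L^2$, so $S_{1,1}=I$. Combining the two displays shows that $S_{\alpha^{-1},\beta^{-1}}$ is a two-sided inverse of $S_{\alpha,\beta}$, which proves invertibility. There is essentially no obstacle beyond spotting the right candidate: the only point requiring care is to invoke the composition theorem in the correct order each time, checking that in each product it is the \emph{right} factor that carries an analytic first symbol and a co-analytic second symbol. This is exactly what the four hypotheses $\alpha,\alpha^{-1}\in H^\infty$ and $\beta,\beta^{-1}\in\overline{H^\infty}$ guarantee.
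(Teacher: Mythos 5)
Your proposal is correct and is essentially the paper's own argument: the paper likewise exhibits $S_{\alpha^{-1},\beta^{-1}}$ as the inverse, observing directly that $S_{\alpha,\beta}f=g$ iff $Pf=\alpha^{-1}Pg$ and $Qf=\beta^{-1}Qg$. Your routing of the verification through Theorem~\ref{thm-multiplicati is of the same form} is only a cosmetic difference, since the if-part of that theorem is exactly the same computation ($\alpha Pf\in H^2$ and $\beta Qf\in H^{2\bot}$ when $\alpha$ is analytic and $\beta$ is co-analytic).
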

\begin{proof}
The proof follows from the fact that $S_{\alpha,\beta}f=g$ iff $Pf=\alpha^{-1}Pg$ and $Qf=\beta^{-1}Qg$; in other words, $S^{-1}_{\alpha,\beta}=S_{\alpha^{-1},\beta^{-1}}$.
\end{proof}

\begin{lem}\label{invertibility of S-z-nalpha,1}
Let $\alpha\in L^\infty$ be such that $S_{\alpha,1}$ is invertible. Let $n$ be an integer. Then $S_{z^n\alpha,1}$ is invertible iff $n=0$.
\end{lem}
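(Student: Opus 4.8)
The plan is to exploit the multiplicativity of composition established in Theorem~\ref{thm-multiplicati is of the same form}, which lets me factor $S_{z^n\alpha,1}$ through the simple building block $S_{z^n,1}$, and then to read off invertibility from the elementary structure of $S_{z^n,1}$. Concretely, for $n\geq 0$ the function $z^n$ is analytic and the second entry $1$ is co-analytic, so Theorem~\ref{thm-multiplicati is of the same form} applies with $(\alpha_1,\beta_1)=(\alpha,1)$ and $(\alpha_2,\beta_2)=(z^n,1)$ and gives the factorization
$$
S_{z^n\alpha,1}=S_{\alpha,1}\,S_{z^n,1},\qquad n\geq 0.
$$
Since $S_{\alpha,1}$ is invertible by hypothesis, the invertibility of $S_{z^n\alpha,1}$ (for $n\geq 0$) is equivalent to that of $S_{z^n,1}$.

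\textbf{The non-negative case.} First I would analyze $S_{z^n,1}$ directly. For $f\in L^2$ one has $S_{z^n,1}f=z^nPf+Qf$, and since $z^nPf\in H^2$ and $Qf\in H^{2\bot}$ are orthogonal with $|z^n|=1$, it follows that $\|S_{z^n,1}f\|^2=\|Pf\|^2+\|Qf\|^2=\|f\|^2$; thus $S_{z^n,1}$ is an isometry, in particular injective with closed range. Its range is $z^nH^2\oplus H^{2\bot}$, which for $n\geq 1$ is a proper subspace of $L^2$ (it omits $1$, for instance). Hence $S_{z^n,1}$ is not surjective for $n\geq 1$, and because $S_{\alpha,1}$ is a bounded bijection it carries this proper closed range onto a proper closed subspace; so $S_{z^n\alpha,1}=S_{\alpha,1}S_{z^n,1}$ fails to be surjective, hence is not invertible, for every $n\geq 1$. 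The case $n=0$ is trivial since $S_{z^0\alpha,1}=S_{\alpha,1}$ is invertible by assumption.

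\textbf{The negative case and the main point.} The remaining case $n\leq -1$ is where the argument requires a little care, and I would dispose of it by reduction to the non-negative case rather than by a separate computation. Writing $n=-m$ with $m\geq 1$, suppose toward a contradiction that $S_{z^{-m}\alpha,1}$ were invertible, and set $\gamma:=z^{-m}\alpha\in L^\infty$, so that $S_{\gamma,1}$ is invertible and $z^m\gamma=\alpha$. Applying the non-negative result already proved, with $\gamma$ in the role of $\alpha$ and exponent $m\geq 1$, shows that $S_{z^m\gamma,1}=S_{\alpha,1}$ is \emph{not} invertible, contradicting the hypothesis that $S_{\alpha,1}$ is invertible. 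Therefore $S_{z^n\alpha,1}$ is not invertible for any $n\leq -1$, and combining the three cases gives the claim. The only genuine obstacle is making sure the factorization hypotheses of Theorem~\ref{thm-multiplicati is of the same form} are met (which forces the split into $n\geq 0$ and $n<0$ and motivates the contradiction step for negative $n$); once the factorization is in place, the isometry-but-not-onto property of $S_{z^n,1}$ does all the work.
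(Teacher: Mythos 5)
Your proof is correct, and for the negative exponents it takes a genuinely different route from the paper's. For $n\geq 1$ both arguments factor through Theorem~\ref{thm-multiplicati is of the same form} and then observe that the isometry $S_{z^n,1}$ is not onto; note that your order $S_{z^n\alpha,1}=S_{\alpha,1}S_{z^n,1}$ is the one the theorem actually licenses (take $\alpha_2=z^n$ analytic, $\beta_2=1$ co-analytic), whereas the paper writes the factors in the opposite order, which the theorem would justify only for analytic $\alpha$. For $n\leq -1$, however, the paper proceeds by direct computation: writing $k=-n$, it establishes the finite-rank identity $S_{\bar z^k\alpha,1}-S_{\alpha,1}S_{\bar z^k,1}=\bar z^k(\alpha-1)P_{k-1}$, solves $S_{\bar z^k,1}f=-S_{\alpha,1}^{-1}\left(\bar z^k(\alpha-1)\right)$ with $\hat f(0)=1$, $\hat f(1)=\cdots=\hat f(k-1)=0$, and thereby exhibits an explicit nonzero element of the kernel of $S_{\bar z^k\alpha,1}$. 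You avoid this computation entirely by a reflection trick: assuming $S_{z^{-m}\alpha,1}$ invertible, you apply the positive-exponent result to $\gamma=z^{-m}\alpha$ and conclude that $S_{\alpha,1}=S_{z^m\gamma,1}$ is not invertible, a contradiction. This is legitimate, since your $n\geq 1$ argument holds for an arbitrary $L^\infty$ symbol with $S_{\gamma,1}$ invertible, and the factorization $S_{z^m\gamma,1}=S_{\gamma,1}S_{z^m,1}$ needs only the analyticity of $z^m$. Your route is shorter and avoids the perturbation computation; what the paper's argument buys in exchange is finer information: it shows that for $n\leq -1$ the operator actually fails to be injective (while for $n\geq 1$ it fails to be surjective), a Fredholm-index-style dichotomy that your contradiction argument does not detect.
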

\begin{proof}
If part is obvious. So we need to show that if $n\neq 0$, then $S_{z^n\alpha,1}$ is not invertible.

Case-1 : $n> 0$. By Theorem \ref{thm-multiplicati is of the same form}, $S_{z^n\alpha,1}=S_{z^n,1}S_{\alpha,1}$. Note that $1$ has no pre-image under $S_{z^n,1}$. Therefore $S_{z^n,1}$ is not invertible, and hence the same is true for $S_{z^n\alpha,1}$.

Case-2 : $n<0$. Write $k$ for $-n$ so that $s_{z^n\alpha,1}=S_{\bar z^k\alpha,1}$. Let $P_{k-1}$ denotes the projection onto the space $\{f\in L^2:\widehat{f}(m)=0\,\,\textup{for all}\,m\neq 0,1,\cdots,(k-1)\}$. Then a small calculation shows that
$$
S_{\bar z^k\alpha,1}-S_{\alpha,1}S_{\bar z^k,1}=\bar z^k(\alpha-1)P_{k-1}.
$$
Let $g=-S_{\alpha,1}^{-1}\left(\bar z^k(\alpha-1)\right)$. Now, it is not hard to see that image of the space $\{f\in L^2:\widehat{f}(0)=1,\widehat{f}(1)=\cdots=\widehat{f}(k-1)=0\}$ under $S_{\bar z^k,1}$ is the full space $L^2$. Therefore there is an $f\in L^2$ with $\widehat{f}(0)=1,\widehat{f}(1)=\cdots=\widehat{f}(k-1)=0$ such that $s_{\bar z^k,1}f=g$. So, the above equation, when applied on $f$, gives $S_{\bar z^k\alpha,1}f=0$. Hence $S_{\bar z^k\alpha,1}$ is not invertible.
\end{proof}
\begin{lem}\label{spectrum of phi}
Let $\phi$ be a continuous function which does not take the value $0$. Then $S_{\phi,1}$ is invertible iff\, $\textup{ind}_0\,\phi =0$.
\end{lem}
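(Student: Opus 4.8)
The plan is to reduce the invertibility of $S_{\phi,1}$ to that of the Toeplitz operator $T_\phi$, and then to read off the answer from the classical description of the spectrum of a Toeplitz operator with continuous symbol. First I would decompose $L^2=H^2\oplus H^{2\bot}$ and write, for $f=Pf+Qf$, $S_{\phi,1}f=\phi Pf+Qf=P(\phi Pf)+\big(Q(\phi Pf)+Qf\big)$. Hence, with respect to this decomposition, $S_{\phi,1}$ has the block form
$$
S_{\phi,1}=\begin{pmatrix} T_\phi & 0\\ H_\phi & I\end{pmatrix},
$$
where $T_\phi=PM_\phi P$ is the Toeplitz operator and $H_\phi=QM_\phi P$ is the bounded operator $f\mapsto Q(\phi Pf)$. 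This is lower triangular with the identity in the lower right corner, so I would next prove the elementary fact that such an operator is invertible if and only if its upper-left block $T_\phi$ is invertible: if $T_\phi$ is invertible then $\left(\begin{smallmatrix} T_\phi^{-1} & 0\\ -H_\phi T_\phi^{-1} & I\end{smallmatrix}\right)$ is a bounded two-sided inverse, and conversely a direct check on the components shows that $S_{\phi,1}$ is injective (resp.\ surjective) only if $T_\phi$ is.

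Once this equivalence is in hand, the lemma follows immediately from the Toeplitz theory recalled in the introduction: since $\phi$ is continuous and never $0$, we have $0\notin\textup{range}\,\phi$, and the formula $\sigma(T_\phi)=\textup{range}\,\phi\cup\{a\notin\textup{range}\,\phi:\textup{ind}_a\phi\neq 0\}$ gives $0\in\sigma(T_\phi)$ if and only if $\textup{ind}_0\phi\neq 0$. Thus $T_\phi$, and hence $S_{\phi,1}$, is invertible precisely when $\textup{ind}_0\phi=0$.

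A second, more self-contained route stays inside the framework of the preceding lemmas. Writing $n=\textup{ind}_0\phi$ and $\phi_0=z^{-n}\phi$, the function $\phi_0$ is continuous, nonvanishing and has winding number $0$; granting that $S_{\phi_0,1}$ is invertible, Lemma~\ref{invertibility of S-z-nalpha,1} (applied with $\alpha=\phi_0$) shows that $S_{\phi,1}=S_{z^n\phi_0,1}$ is invertible if and only if $n=0$. For the winding-number-zero case one uses a continuous logarithm $\phi_0=e^{g}$ and splits $g=g_+ +g_-$ with $g_+$ analytic and $g_-$ co-analytic, so that $\phi_0=\alpha_+\alpha_-$ with $\alpha_+=e^{g_+}\in H^\infty$ and $\alpha_-=e^{g_-}\in\overline{H^\infty}$ both invertible in their respective algebras; the multiplication rule (Theorem~\ref{thm-multiplicati is of the same form}) then factors $S_{\phi_0,1}=M_{\alpha_-}\,S_{\alpha_+,\alpha_-^{-1}}$ as a product of two invertible operators, where Lemma~\ref{invertibility when alpha and beta....} handles the second factor.

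The main obstacle lies exactly in this last factorization step: for a merely continuous $g$ the analytic projection $g_+$ need not be bounded, so $\alpha_\pm$ need not lie in $H^\infty,\overline{H^\infty}$. I would circumvent this by approximating $g$ uniformly by trigonometric polynomials $p_k$, for which the splitting is automatic, obtaining invertible operators $S_{e^{p_k},1}\to S_{\phi_0,1}$ in operator norm by Theorem~\ref{norm of S alpha beta}(i); the genuine difficulty is then to pass to the limit, i.e.\ to control the norms of the inverses $S_{e^{p_k},1}^{-1}$ well enough to conclude that the limit $S_{\phi_0,1}$ is still invertible. Because of this, the block-triangular reduction to $T_\phi$ in the first approach is, to my mind, the cleaner way to finish.
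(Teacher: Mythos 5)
Your first argument is correct and complete, but it takes a genuinely different route from the paper. You observe that in the decomposition $L^2=H^2\oplus H^{2\bot}$ the operator $S_{\phi,1}$ is block lower triangular, $S_{\phi,1}=\left(\begin{smallmatrix} T_\phi & 0\\ H_\phi & I\end{smallmatrix}\right)$ with $H_\phi f=Q(\phi Pf)$, and since $\left(\begin{smallmatrix} I & 0\\ H_\phi & I\end{smallmatrix}\right)$ is invertible (inverse $\left(\begin{smallmatrix} I & 0\\ -H_\phi & I\end{smallmatrix}\right)$) this yields: $S_{\phi,1}$ is invertible iff $T_\phi$ is. The classical description of $\sigma(T_\phi)$ for continuous symbols, quoted in the paper's introduction from \cite{MR} (Theorem 3.3.18), then gives the winding-number criterion, since $0\notin\textup{range}\,\phi$. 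The paper deliberately does not invoke that theorem: it reproves the result from scratch, approximating $\phi$ \emph{multiplicatively} as $\phi=p(1+\psi)$ with $p$ a trigonometric polynomial and $\|\psi\|_\infty\leq\frac{1}{2}$, factoring $p=cz^{m-n+k}uv$ explicitly through its roots with $u,u^{-1}\in H^\infty$ and $v,v^{-1}\in\overline{H^\infty}$, and then combining Lemma \ref{invertibility when alpha and beta....}, the multiplication rule of Theorem \ref{thm-multiplicati is of the same form}, Lemma \ref{invertibility of S-z-nalpha,1}, and the Neumann series bound $\|I-S_{1+\psi,1}\|=\|S_{\psi,0}\|\leq\|\psi\|_\infty<1$. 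Your reduction buys brevity and conceptual transparency, at the cost of using as a black box precisely the Toeplitz theorem whose $S_{\alpha,\beta}$-analogue (Theorem \ref{spectrum when alpha and beta are continuous}) this lemma is designed to prove; the paper's route keeps the development self-contained, which is evidently the authors' intent (``an adaptation of the method used to prove the corresponding result of Toeplitz operator''). Your second sketch correctly diagnoses the obstruction to a direct Wiener--Hopf factorization --- the Riesz projection is unbounded on $L^\infty$, so $g_+$ need not be bounded --- but note that the paper's fix is not your problematic limit of inverses $S_{e^{p_k},1}^{-1}$: by approximating multiplicatively rather than additively, the perturbation $1+\psi$ is handled once and for all by a Neumann series, and all the winding is pushed into the explicit, finitely factorable polynomial $p$. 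Incorporating that trick would also complete your second route.
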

\begin{proof}
Since $\phi$ is continuous and it does not take the value $0$, there is a positive number $\delta$ such that $|\phi(e^{i\theta})|>\delta$ for all $0\leq\theta\leq 2\pi$. Certainly we can choose a trigonometric polynomial $p$ such that $|\phi(e^{i\theta})-p(e^{i\theta})|<\frac{\delta}{3}$ for all $\theta$. Clearly $|p(e^{i\theta})|>\frac{2\delta}{3}$ for all $\theta$. We define the continuous function $\psi :=\frac{\phi-p}{p}$ so that $\phi=p(1+\psi)$ and $|\psi(e^{i\theta})|<\frac{1}{2}$. Therefore the continuous function $1+\psi$ can not wind around $0$ so that $\textup{ind}_0\,(1+\psi)=0$, and hence $\textup{ind}_0\,\phi=\textup{ind}_0\, p+\textup{ind}_0 \,(1+\psi)=\textup{ind}_0\, p$. Therefore it is enough to show that $S_{\phi,1}$ is invertible if and only if $\textup{ind}_0\,p=0$. Let $n$ be a non negative number such that $e^{in\theta}p(e^{i\theta})$ has no non-zero Fourier coefficients corresponding to negative indices. Since $p$ has no zero on $S^1$, we can factored $e^{in\theta}p(e^{i\theta})$ as
$$
e^{in\theta}p(e^{i\theta})=ce^{im\theta}\prod_{1}^k(e^{i\theta}-z_j)\prod_1^{l}(e^{i\theta}-w_j)
,$$
for some non negative integers $m,k,l$, non zero complex numbers $z_j$'s and $w_j$'s, where $z_j$'s lie (strictly) inside the circle $S^1$ and $w_j$'s  lie (strictly) outside the circle.
We can write
$$
p(e^{i\theta})=ce^{i(m-n+k)\theta}\prod_{1}^k(1-z_je^{-i\theta})\prod_1^{l}(e^{i\theta}-w_j).
$$
Since $|w_j|>1$ and $|z_j|<1$, it follows that $\textup{ind}_0\,(e^{i\theta}-w_j)=0$ and  $\textup{ind}_0\,(1-z_je^{-i\theta})=0$. So, from the above equation, we get $\textup{ind}_0\,p=m-n+k$. Therefore it is enough to show that $S_{\phi,1}$ is invertible if and only if $m-n+k= 0$.

We write $p$ as 
$$
p(e^{i\theta})=ce^{i(m-n+k)\theta}u(e^{i\theta})v(e^{i\theta}),
$$
where
$$
u(e^{i\theta})=\prod_1^{l}(e^{i\theta}-w_j),\,\,\,\,v(e^{i\theta})=\prod_{1}^k(1-z_je^{-i\theta}).
$$
Note that $u$ is analytic, $v$ is co-analytic and both of them are continuous. Since $|w_j|>1$ for all $j=1,2,\cdots l$, it follows that $u^{-1}$ is analytic and continuous.  Similarly, $v^{-1}$ is co-analytic and continuous. Now, it is not hard to see that,
$
S_{\phi,1}=S_{cz^{m-n+k}(1+\psi)uv,1}$ is invertible iff $
S_{cz^{m-n+k}(1+\psi)u,v^{-1}}$ is invertible. But, by Theorem \ref{thm-multiplicati is of the same form},  $S_{cz^{m-n+k}(1+\psi)u,v^{-1}}=S_{z^{m-n+k}(1+\psi),1}S_{cu,v^{-1}}$, and by Lemma \ref{invertibility when alpha and beta....}, $S_{cu,v^{-1}}$ is invertible. Therefore we can say that $S_{\phi,1}$ is invertible iff $S_{z^{m-n+k}(1+\psi),1}$ is invertible. Hence the proof follows by Lemma \ref{invertibility of S-z-nalpha,1} if we can show that $S_{(1+\psi),1}$ is invertible. But this is true since
$$
||I-S_{(1+\psi),1}||=||S_{\psi,0}||\leq||\psi||_\infty\leq\frac{1}{2}<1.
$$
\end{proof}
Now we prove Theorem \ref{spectrum when alpha and beta are continuous}.
\begin{proof}(proof of Theorem \ref{spectrum when alpha and beta are continuous}.) Theorem \ref{spectrum} (i) implies that $\textup{range}\,\alpha\cup\textup{range}\,\beta$ is contained in $\sigma(S_{\alpha,\beta})$. Thus we only need to show that, for $a\notin\textup{range}\,\alpha\cup\textup{range}\,\beta$,\, $S_{\alpha,\beta}-aI$ is invertible iff $\textup{ind}_a\,\alpha=\textup{ind}_a\,\beta$. Since $a\notin\textup{range}\,\alpha\cup\textup{range}\,\beta$ iff $0\notin\textup{range}\,(\alpha-a)\cup\textup{range}\,(\beta-a)$, $S_{\alpha,\beta}-aI=S_{\alpha-a,\beta-a}$, and $\textup{ind}_a\,\alpha=\textup{ind}_0\,(\alpha-a)$, $\textup{ind}_a\,\beta=\textup{ind}_0\,(\beta-a)$, we may assume that $a=0$. Therefore we need to show the following : If $\alpha,\beta$ are never vanishing continuous functions then $S_{\alpha,\beta}$ is invertible iff $\textup{ind}_0\,\alpha=\textup{ind}_0\,\beta$. Since $\beta$ is never vanishing continuous function we can show 
that $S_{\alpha,\beta}$ is invertible iff $S_{\frac{\alpha}{\beta},1}$ is invertible. So writing $\phi=\frac{\alpha}{\beta}$, using the fact that $\textup{ind}_0\frac{\alpha}{\beta}=\textup{ind}_0\,\alpha-\textup{ind}_0\,\beta$, our problem ultimately reduces to Lemma \ref{spectrum of phi}. Hence the proof.
\end{proof}

\begin{rem}
Theorem \ref{spectrum when alpha and beta are continuous}, in particular, implies that if $\alpha,\beta$ are continuous then the spectral radius of $S_{\alpha,\beta}$ is $\textup{max}\,\{||\alpha||_\infty,||\beta||_{\infty}\}$. But we don't know whether the same is true for general $\alpha,\beta$. 
\end{rem}
\section{Injectivity}
In this section we discuss about the injectivity of the operators $S_{\alpha,\beta}$ and $S^*_{\alpha,\beta}.$ Unlike the Toeplitz operator, the Coburn Alternative type theorem is not true for general $S_{\alpha,\beta}$.  

For a measurable set $A \subset S^1,$ $|A|$ denotes its usual Lebesgue measure.

\begin{thm}\label{coburn}
Let $\alpha$ and $\beta$ be non zero $L^\infty$ functions. Let $Z_\alpha$ and $Z_\beta$ denote the zero sets of $\alpha$ and $\beta$ respectively. 

\noindent \textup{(i)} If $|Z_\alpha|=0$ or $|Z_\beta|=0$ then at least one of $S_{\alpha,\beta}$ and $S^*_{\alpha,\beta}$ is injective.

\noindent \textup{(ii)} Let $|Z_\alpha|,|Z_\beta|\neq 0.$ If  $Z_\alpha\neq Z_\beta$ (in the sense of measure i.e. $|Z_\alpha\setminus Z_\beta|\neq 0$) then $S_{\alpha,\beta}$ is injective

\noindent \textup{(iii)} Let $|Z_\alpha|,|Z_\beta|\neq 0.$ $S^*_{\alpha,\beta}$ is injective iff $|Z_\alpha\cap Z_\beta|=0$.
\end{thm}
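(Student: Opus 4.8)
The plan is to recall the operator formulas $S_{\alpha,\beta}f=\alpha Pf+\beta Qf$ and $S^*_{\alpha,\beta}f=P(\bar\alpha f)+Q(\bar\beta f)$, and to handle injectivity of $S_{\alpha,\beta}$ directly while reducing injectivity of $S^*_{\alpha,\beta}$ to a statement about common zeros. For part (i), suppose $|Z_\beta|=0$; I would show $S_{\alpha,\beta}$ is injective. If $S_{\alpha,\beta}f=0$ then $\alpha Pf=-\beta Qf$; since $\beta$ vanishes only on a null set, $Qf=-(\alpha/\beta)Pf$ forces a relation that, combined with the F. and M. Riesz theorem, should kill $f$. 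More transparently, if $\alpha Pf+\beta Qf=0$ a.e.\ and $f\neq 0$, I would examine where $Pf$ and $Qf$ vanish; the key is that a nonzero analytic (resp.\ co-analytic) function vanishes only on a null set, so $\alpha Pf=-\beta Qf$ ties an $H^2$ function to an $H^{2\bot}$ function, and the null-zero-set hypothesis on one of $\alpha,\beta$ prevents cancellation. The symmetric case $|Z_\alpha|=0$ is identical.

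For part (ii), assume $|Z_\alpha\setminus Z_\beta|\neq 0$ and suppose $S_{\alpha,\beta}f=0$, i.e.\ $\alpha Pf=-\beta Qf$ a.e. On the set $Z_\alpha\setminus Z_\beta$ we have $\alpha=0$ but $\beta\neq 0$, so $\beta Qf=0$ there forces $Qf=0$ on a set of positive measure; since $Qf$ is co-analytic, the F. and M. Riesz theorem gives $Qf\equiv 0$. Then $\alpha Pf=0$ a.e.; as $\alpha\neq 0$ off a null set and $Pf\in H^2$, the same Riesz-type argument yields $Pf\equiv 0$, hence $f=0$. This is the cleanest of the three parts.

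Part (iii) is the main obstacle and the only genuine equivalence. Here $S^*_{\alpha,\beta}f=P(\bar\alpha f)+Q(\bar\beta f)$. For the "if" direction I would assume $|Z_\alpha\cap Z_\beta|=0$ and show $S^*_{\alpha,\beta}f=0$ implies $f=0$: the equation splits as $P(\bar\alpha f)=0$ and $Q(\bar\beta f)=0$, meaning $\bar\alpha f\in H^{2\bot}$ and $\bar\beta f\in H^2$. I expect to multiply or otherwise combine these two membership conditions to locate where $f$ must vanish, using that $\bar\alpha f$ being co-analytic and $\bar\beta f$ being analytic, together with the fact that $\alpha,\beta$ have no common zero (off a null set), forces $f=0$ a.e.\ --- likely by producing a function in $H^2\cap H^{2\bot}$ that must be constant and then zero. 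For the "only if" direction, I would show that if $|Z_\alpha\cap Z_\beta|\neq 0$ then $S^*_{\alpha,\beta}$ is not injective by constructing a nonzero $f$ supported (or chosen to interact) on $Z_\alpha\cap Z_\beta$ so that both $\bar\alpha f$ and $\bar\beta f$ vanish there, making $S^*_{\alpha,\beta}f=0$; the delicate point is producing such an $f$ that simultaneously lands $\bar\alpha f$ in $H^{2\bot}$ and $\bar\beta f$ in $H^2$, which is where the measure-theoretic structure of the common zero set will have to be exploited carefully.

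The recurring engine throughout is the F. and M. Riesz theorem and its co-analytic analogue: any analytic or co-analytic $L^2$ function that vanishes on a positive-measure set is identically zero. The contrast between parts (ii) and (iii) --- injectivity of $S_{\alpha,\beta}$ hinging on the \emph{difference} of zero sets while injectivity of $S^*_{\alpha,\beta}$ hinges on the \emph{intersection} --- is exactly the asymmetry that makes the Coburn Alternative fail here, and I would expect the proof of (iii), especially the construction in its "only if" half, to require the most care.
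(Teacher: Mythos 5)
Your plan for part (i) rests on a false statement. You propose to show that $|Z_\beta|=0$ (or $|Z_\alpha|=0$) by itself forces $S_{\alpha,\beta}$ to be injective, but it does not: take $\alpha=\bar z$, $\beta=1$ (both zero sets empty) and $f=1-\bar z$; then $Pf=1$, $Qf=-\bar z$, and $S_{\bar z,1}f=\bar z\cdot 1+1\cdot(-\bar z)=0$. The theorem only asserts the \emph{alternative}, and any proof must couple the two kernels rather than attack $S_{\alpha,\beta}$ alone. The paper does exactly this: assume $S_{\alpha,\beta}f=0$ and $S^*_{\alpha,\beta}g=0$ simultaneously; the adjoint equation says $\bar\alpha g\in H^{2\bot}$ and $\bar\beta g\in H^2$; conjugating $\alpha Pf+\beta Qf=0$ and multiplying by $g$ gives $g\bar\alpha\overline{Pf}+g\bar\beta\overline{Qf}=0$, where the first $L^1$ summand has Fourier coefficients only at negative indices and the second only at positive indices, so each vanishes identically. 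If $g\neq 0$ then $g\bar\alpha$ is a nonzero function in $H^{2\bot}$ (here $|Z_\alpha|=0$ is used), so by F.\ and M.\ Riesz it vanishes only on a null set, forcing $Pf=0$, then $\beta Qf=0$ and hence $Qf=0$, i.e.\ $f=0$. Your sketch (``prevents cancellation \ldots should kill $f$'') supplies no mechanism that could substitute for this Fourier-support argument, and none exists, since the claim you aim at is false.

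Part (iii) has a second genuine gap: in the ``if'' direction you never invoke the standing hypothesis $|Z_\alpha|,|Z_\beta|\neq 0$, and no argument ignoring it can succeed. Indeed for $\alpha=z$, $\beta=\bar z$, $f=1$ one has $\bar\alpha f=\bar z\in H^{2\bot}$ and $\bar\beta f=z\in H^2$ with $Z_\alpha\cap Z_\beta=\emptyset$, yet $f\neq 0$ (the kernel of $S^*_{z,\bar z}$ is spanned by $1$ and $\bar z$); so the two membership conditions plus ``no common zeros'' do not force $f=0$. With the hypothesis the proof is immediate: $\bar\alpha f$ is co-analytic and vanishes on $Z_\alpha$, a set of positive measure, so $\bar\alpha f\equiv 0$ by F.\ and M.\ Riesz; likewise $\bar\beta f\equiv 0$; hence $f=0$ a.e.\ off $Z_\alpha$ and off $Z_\beta$, and $|Z_\alpha\cap Z_\beta|=0$ gives $f=0$. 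Your anticipated ``delicate point'' in the ``only if'' half is also a non-issue: taking $g=\chi_{Z_\alpha\cap Z_\beta}$ makes $\bar\alpha g$ and $\bar\beta g$ identically zero, so $S^*_{\alpha,\beta}g=0$ trivially, exactly as in the paper. Part (ii) of your proposal does match the paper's argument, except for the slip ``as $\alpha\neq 0$ off a null set'' --- in (ii) one has $|Z_\alpha|\neq 0$; the correct step is that $Pf$ vanishes on the set $\{\alpha\neq 0\}$, which has positive measure because $\alpha$ is a nonzero function, and then F.\ and M.\ Riesz applies.
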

\begin{proof}
(i) Without loss of generality assume that $|Z_\alpha|=0$. Let $S_{\alpha,\beta}f=0$ and $S^*_{\alpha,\beta}g=0$ for some $f,g\in L^2$. We have to show that at least one of $f$ and $g$ is zero. Now $S^*_{\alpha,\beta}g=0$ implies that $P(\bar\alpha g)+Q(\bar \beta g)=0$ or $P(\bar\alpha g)=0$ and $Q(\bar \beta g)=0$ which is equivalent to saying that $\bar\alpha g$ in $H^{2\bot}$ and $\bar\beta g$ is in $H^2$. On the other hand, since $S_{\alpha,\beta}f=0$, we have $\alpha Pf+\beta Qf=0$. Taking conjugate and then multiplying by $g$ we get $g\bar\alpha\overline{Pf}+g\bar\beta\overline{Qf}=0$. Since $g\bar\alpha$ is in $H^{2\bot}$ and $\overline{Pf}$ is in $\overline{ H^2}$,  $g\bar\alpha\overline{Pf}$ is an $L^1$ function whose all the non negative Fourier coefficients are zero. Similarly $g\bar\beta\overline {Qf}$ is an $L^1$ function whose all the non positive Fourier coefficients are zero. Therefore we can conclude that $g\bar\alpha\overline{Pf}=g\bar\beta\overline{Qf}=0$. If $g$ is non zero then, $|Z_\alpha|$ being zero, $g\bar\alpha$ is non zero; therefore $\overline{Pf}=0$ or $Pf=0$ and hence $\beta Qf=0$ (as $\alpha Pf+\beta Qf=0$) so that $Qf=0$ and thus $f=0$.

(ii) If $Z_\alpha\neq Z_\beta$ then there is a set $E$ of positive measure such that one of $\alpha$ and $\beta$ is zero on $E$ and other is non zero on each point of $E$. Assume that $\alpha$ is zero on $E$ and $\beta$ never vanishes on $E$. Now, if $S_{\alpha,\beta}f=0$ i.e. $\alpha Pf+\beta Qf=0$ then clearly $Qf$ is zero almost every where on $E$. Therefore $Qf$ must be identically zero and consequently $\alpha Pf=0$ or $Pf=0$ and thus $f=0$ proving the injectivity of $S_{\alpha,\beta}$. 

(iii) Let $|Z_\alpha\cap Z_\beta|=0$. If $S^*_{\alpha,\beta}g=0$  i.e. $P(\bar \alpha g)+Q(\bar\beta g)=0$ then $P(\bar\alpha g)=0$ and $Q(\bar\beta g)=0$ or equivalently we can say $\bar\alpha g$ is in $H^{2\bot}$ and $\bar\beta g$ in $H^2$. Since $\bar\alpha g$ is zero on $Z_\alpha$ which has positive measure, $\bar\alpha g=0$. Similarly $\bar\beta g=0$. Since $|Z_\alpha\cap Z_\beta|=0$ we must have $g=0$. Therefore $S^*_{\alpha,\beta}$ is injective. Now let $|Z_\alpha\cap Z_\beta|\neq 0$. If we take $g$ to be the
indicator function of $Z_\alpha\cap Z_\beta$, it is easy to see that $S^*_{\alpha,\beta}g=0$ proving the non injectivity of $S^*_{\alpha,\beta}$.
\end{proof}

\begin{rem}
The above theorem would be complete if we could prove the other implication of (ii) i.e if $|Z_\alpha|,|Z_\beta|\neq 0$ and $Z_\alpha= Z_\beta$ then $S_{\alpha,\beta}$ is not injective. But we don't know whether this is true in general. Whatever example we got, we saw that the result is true. We discus one such special case here : Let $E$ be an open non empty interval in $S^1$. Let $\alpha,\beta\in L^\infty$ be such that both are never vanishing continuous functions on $ E^c$ and $Z_\alpha=Z_\beta=E$. It is geometrically evident that we can define never vanishing continuous functions $\alpha^\prime$, $\beta^\prime$ on $S^1$ such that they are same as $\alpha$, $\beta$ respectively on $E^c$, as well as their winding number at $0$ are same i.e. $\textup{ind}_0\,\alpha^\prime=\textup{ind}_0\,\beta^\prime$. Therefore, by Theorem \ref{spectrum when alpha and beta are continuous}, $S_{\alphaì\prime,\beta^\prime}$ is invertible. Hence $S_{\alpha^\prime,\beta^\prime}f=\chi_{E}$ for some non zero $f\in L^2$; where $\chi_E$ denotes the indicator function of $E$. Since $S_{\alpha,\beta}=\chi_{E^c}S_{\alpha^\prime,\beta^\prime}$, $S_{\alpha,\beta}f=0$ implying the non injectivity of $S_{\alpha,\beta}$.
\end{rem}
Like the Toeplitz operator, (i) of the above theorem has the following applications.
\begin{cor}
Let $\alpha,\beta$ be non zero $L^\infty$ functions. Assume that either $|Z_\alpha|=0$ or $|Z_\beta|=0$
Then $S_{\alpha,\beta}$ has dense range if it is not injective.
\end{cor}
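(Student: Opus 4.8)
The plan is to deduce this immediately from part (i) of Theorem \ref{coburn} together with the standard duality between dense range and injectivity of the adjoint. Recall that for any bounded operator $T$ on a Hilbert space one has the orthogonal decomposition $\overline{\textup{Range}(T)}=\big(\ker T^*\big)^\perp$. Consequently $T$ has dense range if and only if $T^*$ is injective. Applying this to $T=S_{\alpha,\beta}$, the assertion ``$S_{\alpha,\beta}$ has dense range'' is exactly equivalent to ``$S^*_{\alpha,\beta}$ is injective''.

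With this reformulation the corollary becomes purely formal. First I would record the hypothesis: $\alpha,\beta$ are non-zero $L^\infty$ functions with $|Z_\alpha|=0$ or $|Z_\beta|=0$, which is precisely the hypothesis of Theorem \ref{coburn}(i). That theorem therefore guarantees that at least one of $S_{\alpha,\beta}$ and $S^*_{\alpha,\beta}$ is injective. Now I would invoke the hypothesis of the corollary itself, namely that $S_{\alpha,\beta}$ is \emph{not} injective. Comparing the two statements, the injective one among the pair $\{S_{\alpha,\beta},S^*_{\alpha,\beta}\}$ cannot be $S_{\alpha,\beta}$, so it must be $S^*_{\alpha,\beta}$.

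Finally I would close the loop: since $S^*_{\alpha,\beta}$ is injective, the duality relation above gives that $\overline{\textup{Range}(S_{\alpha,\beta})}=\big(\ker S^*_{\alpha,\beta}\big)^\perp=\{0\}^\perp=L^2$, i.e.\ $S_{\alpha,\beta}$ has dense range, as claimed. There is essentially no obstacle here; the only point that needs care is to state correctly the kernel--range duality $\overline{\textup{Range}(S_{\alpha,\beta})}=(\ker S^*_{\alpha,\beta})^\perp$, and everything else is a one-line logical consequence of Theorem \ref{coburn}(i). This parallels exactly how the analogous statement for Toeplitz operators follows from the Coburn Alternative listed in the introduction.
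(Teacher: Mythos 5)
Your proposal is correct and matches the paper's argument in substance: both deduce from Theorem \ref{coburn}(i) that $S^*_{\alpha,\beta}$ must be injective when $S_{\alpha,\beta}$ is not, and then conclude dense range via the kernel--range duality, which the paper merely spells out inline as a contradiction argument with a nonzero $g$ orthogonal to the range.
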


\begin{proof}
If $S_{\alpha,\beta}$ is not injective, by Theorem \ref{coburn}, (i), $S^*_{\alpha,\beta^*}$ is injective. If possible let the range of $S_{\alpha,\beta}$ is not dense. Then there there is a non zero $g$ such that $\langle S_{\alpha,\beta}f,g\rangle=0$ for all $f$. Therefore $\langle f,S_{\alpha,\beta}^*g\rangle=0$ for all $f$. Putting $f=S_{\alpha,\beta}^*g$, we conclude that $S_{\alpha,\beta}^*g=0$ which contradicts the injectivity of $S_{\alpha,\beta}^*$.
\end{proof}

\begin{cor}
Let $\alpha,\beta$ be non-constant $L^\infty$ functions. Assume that at least one of $\alpha$ and $\beta$ can not be constant on any set of positive measure. Then
$$
\Pi_0(S_{\alpha,\beta})\cap\overline{\Pi_0(S_{\alpha,\beta}^*)}=\emptyset.
$$
\end{cor}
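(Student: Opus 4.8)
The plan is to reduce the statement to part (i) of Theorem \ref{coburn} by translating the operator. Suppose, toward a contradiction, that some $\lambda$ lies in $\Pi_0(S_{\alpha,\beta})\cap\overline{\Pi_0(S_{\alpha,\beta}^*)}$. Then $\lambda$ is an eigenvalue of $S_{\alpha,\beta}$ and $\bar\lambda$ is an eigenvalue of $S_{\alpha,\beta}^*$, which means that neither $S_{\alpha,\beta}-\lambda I$ nor $S_{\alpha,\beta}^*-\bar\lambda I$ is injective. First I would record the elementary identities $S_{\alpha,\beta}-\lambda I=S_{\alpha-\lambda,\beta-\lambda}$ and $S_{\alpha,\beta}^*-\bar\lambda I=(S_{\alpha,\beta}-\lambda I)^*=S_{\alpha-\lambda,\beta-\lambda}^*$, obtained by writing $f=Pf+Qf$ and using $P+Q=I$. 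Thus both $S_{\alpha-\lambda,\beta-\lambda}$ and its adjoint fail to be injective.

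Next I would verify that the pair $(\alpha-\lambda,\beta-\lambda)$ satisfies the hypotheses of Theorem \ref{coburn}(i). Since $\alpha$ and $\beta$ are non-constant, neither $\alpha-\lambda$ nor $\beta-\lambda$ vanishes identically, so both are nonzero $L^\infty$ functions. Moreover, the zero set of $\alpha-\lambda$ is exactly the level set $\{e^{i\theta}:\alpha(e^{i\theta})=\lambda\}$, and similarly for $\beta-\lambda$. By hypothesis at least one of $\alpha,\beta$ --- say $\alpha$ --- cannot be constant on any set of positive measure, so this level set has measure zero; that is, $|Z_{\alpha-\lambda}|=0$.

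With these hypotheses in hand, Theorem \ref{coburn}(i), applied to the symbols $\alpha-\lambda$ and $\beta-\lambda$, guarantees that at least one of $S_{\alpha-\lambda,\beta-\lambda}$ and $S_{\alpha-\lambda,\beta-\lambda}^*$ is injective, contradicting the conclusion of the first paragraph. Hence no such $\lambda$ exists and the intersection is empty.

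There is no serious analytic obstacle here; the work is essentially bookkeeping. The one point requiring care is the identification of the conjugated quantities --- recognising that $\lambda\in\overline{\Pi_0(S_{\alpha,\beta}^*)}$ is precisely the non-injectivity of $S_{\alpha-\lambda,\beta-\lambda}^*$ --- together with checking that the two non-constancy assumptions are exactly what is needed to keep \emph{both} translated symbols nonzero and to force a measure-zero zero set for one of them, for \emph{every} candidate $\lambda$, so that Theorem \ref{coburn}(i) genuinely applies.
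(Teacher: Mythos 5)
Your proposal is correct and follows essentially the same route as the paper: both reduce to Theorem \ref{coburn}(i) via the identities $S_{\alpha,\beta}-\lambda I=S_{\alpha-\lambda,\beta-\lambda}$ and $S_{\alpha,\beta}^*-\bar\lambda I=S_{\alpha-\lambda,\beta-\lambda}^*$, deriving a contradiction from the simultaneous non-injectivity of the translated operator and its adjoint. Your explicit verification that the non-constancy hypotheses keep both translated symbols nonzero and force $|Z_{\alpha-\lambda}|=0$ for every $\lambda$ is a detail the paper leaves implicit, but it is the same argument.
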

\begin{proof}
If possible let  $\lambda\in\Pi_0(S_{\alpha,\beta})$ and $\bar\lambda\in\Pi_0(S_{\alpha,\beta}^*)$ for some complex number $\lambda$. Then there are non zero functions $f$ and $g$ such that $(S_{\alpha,\beta}-\lambda I)f=0$ and $(S_{\alpha,\beta}^*-\bar\lambda I)g=0$. But these are equivalent to saying that $S_{\alpha-\lambda,\beta-\lambda}f=0$ and $S^*_{\alpha-\lambda,\beta-\lambda}g=0$ so that both $S_{\alpha-\lambda,\beta-\lambda}$ and $S^*_{\alpha-\lambda,\beta-\lambda}$ are not injective contradicting (i) of Theorem \ref{coburn}.
\end{proof}

\begin{cor}
Let $\alpha,\beta$ are non-constant real-valued $L^\infty$ functions such that at least one of $\alpha$ and $\beta$ can not be constant on any set of positive measure. Also assume that $\alpha-\beta$ is a real constant. Then point spectrum of $S_{\alpha,\beta}$ is empty.
\end{cor}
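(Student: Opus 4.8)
The plan is to reduce the statement to the previous corollary by first showing that, under the stated hypotheses, $S_{\alpha,\beta}$ is self-adjoint. Write $c=\alpha-\beta$ for the real constant. Since $\alpha$ and $\beta$ are real-valued, $\bar\alpha=\alpha$ and $\bar\beta=\beta$, so the adjoint formula from Section 8 gives $S_{\alpha,\beta}^*f=P(\alpha f)+Q(\beta f)$ for all $f\in L^2$. Using $\alpha=\beta+c$ together with $P+Q=I$, this simplifies to $P(\alpha f)+Q(\beta f)=P(\beta f)+cPf+Q(\beta f)=\beta f+cPf$. On the other hand $S_{\alpha,\beta}f=\alpha Pf+\beta Qf=(\beta+c)Pf+\beta Qf=\beta f+cPf$. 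Hence $S_{\alpha,\beta}^*f=S_{\alpha,\beta}f$ for every $f$, i.e. $S_{\alpha,\beta}$ is self-adjoint.

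Next I would record the two standard consequences of self-adjointness that drive the argument. First, every eigenvalue of a self-adjoint operator is real: if $S_{\alpha,\beta}f=\lambda f$ with $f\neq 0$, then $\lambda\|f\|^2=\langle S_{\alpha,\beta}f,f\rangle=\langle f,S_{\alpha,\beta}f\rangle=\bar\lambda\|f\|^2$ forces $\lambda=\bar\lambda$. Thus $\Pi_0(S_{\alpha,\beta})\subset\mathbb{R}$, so conjugating the set does nothing: $\overline{\Pi_0(S_{\alpha,\beta})}=\Pi_0(S_{\alpha,\beta})$. Second, since $S_{\alpha,\beta}^*=S_{\alpha,\beta}$, the two point spectra coincide, $\Pi_0(S_{\alpha,\beta}^*)=\Pi_0(S_{\alpha,\beta})$, and therefore $\overline{\Pi_0(S_{\alpha,\beta}^*)}=\overline{\Pi_0(S_{\alpha,\beta})}=\Pi_0(S_{\alpha,\beta})$.

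Finally I would invoke the previous corollary. Its hypotheses, namely that $\alpha,\beta$ are non-constant and that at least one of them is not constant on any set of positive measure, are exactly those assumed here, so it applies and yields $\Pi_0(S_{\alpha,\beta})\cap\overline{\Pi_0(S_{\alpha,\beta}^*)}=\emptyset$. Substituting the identity from the previous paragraph, the left-hand side equals $\Pi_0(S_{\alpha,\beta})\cap\Pi_0(S_{\alpha,\beta})=\Pi_0(S_{\alpha,\beta})$. Hence $\Pi_0(S_{\alpha,\beta})=\emptyset$, as claimed. The only genuine content is the self-adjointness reduction in the first paragraph; once that is established the conclusion is immediate, so I do not anticipate any real obstacle beyond carrying out the short simplification of the adjoint using $\alpha-\beta=c$.
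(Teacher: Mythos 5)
Your proposal is correct and takes essentially the same route as the paper: reduce to the previous corollary via self-adjointness of $S_{\alpha,\beta}$, noting that eigenvalues of a self-adjoint operator are real and that $\Pi_0(S_{\alpha,\beta}^*)=\Pi_0(S_{\alpha,\beta})$. The only difference is that the paper obtains self-adjointness by citing Theorem 2.1 of \cite{NY8}, whereas you verify it directly from the adjoint formula $S^*_{\alpha,\beta}f=P(\bar\alpha f)+Q(\bar\beta f)$ and $\alpha=\beta+c$ --- a short, correct computation that makes the step self-contained.
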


\begin{proof}
By Theorem 2.1 in \cite{NY8}, $S_{\alpha,\beta}$ is self-adjoint. If possible let $\lambda\in \Pi_0(S_{\alpha,\beta})$. Since $S_{\alpha,\beta}$ is self adjoint $\lambda$ must be real and it is also in the point spectrum of $S_{\alpha,\beta}^*$. This implies a contradiction to the previous corollary.
\end{proof}


\begin{thebibliography}{99}

\bibitem{MR} Martínez-Avendaño, Rubén A.; Rosenthal, Peter. \textit{An introduction to operators on the Hardy-Hilbert space,} Springer, Berlin (2007).

\bibitem{GK} Gohberg, I., Krupnik, N. \textit{One-dimensional linear singular integral equations,} Vol. 1. Birkhäuser, Basel (1992).

\bibitem{NY1} Nakazi, T., Yamamoto, T. \textit{Some singular integral operators and Helson-Szegő measures,}  J. Funct. Anal. 88 (1990), no. 2, 366–-384.

\bibitem{NY2} Nakazi, T., Yamamoto, T. \textit{Weighted norm inequalities for some singular integral operators,}  J. Funct. Anal. 148 (1997), no. 2, 279–-295.

\bibitem{NY3} Nakazi, T., Yamamoto, T. \textit{Norms of some singular integral operators and their inverse operators,}  J. Operator Theory 40 (1998), no. 1, 185–-207.

\bibitem{NY4} Nakazi, T. \textit{Essential norms of some singular integral operators,}  Arch. Math. (Basel) 73 (1999), no. 6, 439–-441.

\bibitem{NY5} Nakazi, T. \textit{Norm inequalities for some singular integral operators,}  Math. Inequal. Appl. 3 (2000), no. 3, 411-–421.

\bibitem{NY6} Nakazi, T., Yamamoto, T. \textit{Norms and essential norms of the singular integral operator with Cauchy kernel on weighted Lebesgue spaces,}  Integral Equations Operator Theory 68 (2010), no. 1, 101–-113.

\bibitem{NY7} Nakazi, T. \textit{Norm inequality of $AP+BQ$ for selfadjoint projections $P$ and $Q$ with $PQ=0,$}  J. Math. Inequal. 7 (2013), no. 3, 513-–516.

\bibitem{NY8} Nakazi, T., Yamamoto, T. \textit{Normal singular integral operators with Cauchy kernel on $L^2,$}   Integral Equations Operator Theory 78 (2014), no. 2, 233-–248.

\bibitem{Y1}  Yamamoto, T. \textit{Invertibility of some singular integral operators and a lifting theorem,}   Hokkaido Math. J. 22 (1993), no. 2, 181-–198.

\bibitem{Y2}  Yamamoto, T. \textit{Boundedness of some singular integral operators in weighted $L^2$ spaces,}   J. Operator Theory 32 (1994), no. 2, 243–-254.

\end{thebibliography}
\end{document}